\numberwithin{equation}{section}
\newtheorem{theorem}[equation]{Theorem}
\newtheorem{lemma}[equation]{Lemma}
\newtheorem{proposition}[equation]{Proposition}
\newtheorem{conjecture}[equation]{Conjecture}
\theoremstyle{definition}
\newtheorem{example}[equation]{Example}
\newcommand{\BB}{\mathbb{B}}
\newcommand{\RR}{\mathbb{R}}
\newcommand{\NN}{\mathbb{N}}
\newcommand{\CC}{\mathbb{C}}
\newcommand{\DD}{\mathbb{D}}
\newcommand{\EE}{\mathbb{E}}
\newcommand{\II}{\mathbb{I}}
\newcommand{\JJ}{\mathbb{J}}
\newcommand{\TT}{\mathbb{T}}
\newcommand{\VV}{\mathbb{V}}
\newcommand{\ZZ}{\mathbb{Z}}
\newcommand{\dif}{\mathrm{d}}
\newcommand{\calA}{\mathcal{A}}
\newcommand{\calF}{\mathcal{F}}
\newcommand{\calH}{\mathcal{H}}
\newcommand{\calI}{\mathcal{I}}
\newcommand{\calL}{\mathcal{L}}
\newcommand{\calM}{\mathcal{M}}
\newcommand{\calS}{\mathcal{S}}
\newcommand{\TTT}{\mathcal{T}}
\newcommand{\PPP}{\mathcal{P}}
\DeclarePairedDelimiter\abs{\lvert}{\rvert}
\DeclarePairedDelimiter\norm{\lVert}{\rVert}
\DeclarePairedDelimiterX\Set[1]\{\}{#1}
\newcommand{\one}{\mathbf{1}}
\newcommand{\ind}[1]{\one_{#1}}
\begin{document}

\title[Polynomial ergodic theorems in the spirit of Dunford and Zygmund]{Polynomial ergodic theorems in the spirit of\\ Dunford and Zygmund}

\author{Dariusz Kosz}
\address{\scriptsize Dariusz Kosz (\textnormal{dariusz.kosz@pwr.edu.pl}) \newline
	Wroc{\l}aw University of Science and Technology, Wybrze{\.z}e Stanis{\l}awa Wyspia{\'n}skiego 27, 50-370 Wroc{\l}aw, Poland
    \vspace{-0.8em} 
}

\author{Bartosz Langowski}
\address{\scriptsize Bartosz Langowski (\textnormal{blangowski@franciscan.edu}) \newline
	Franciscan University of Steubenville, 1235 University Blvd., Steubenville, OH 43952, USA
    \vspace{-0.8em}
}

\author{Mariusz Mirek}
\address{\scriptsize Mariusz Mirek (\textnormal{mariusz.mirek@rutgers.edu}) \newline
	Rutgers University, Piscataway, NJ 08854-8019, USA
\newline Uniwersytet Wroc{\l}awski, Plac Grunwaldzki 2/4, 50-384 Wro\-c{\l}aw, Poland
\vspace{-0.8em}
}

\author{Pawe{\l} Plewa}
\address{\scriptsize Pawe{\l} Plewa (\textnormal{pawel.plewa@pwr.edu.pl}) \newline
	Wroc{\l}aw University of Science and Technology, Wybrze{\.z}e Stanis{\l}awa Wyspia{\'n}skiego 27, 50-370 Wroc{\l}aw, Poland
\newline Politecnico di
Torino, Corso Duca degli Abruzzi 24, 10129 Torino, Italy 
}

\begin{abstract}
The main goal of the paper is to prove convergence in norm and
pointwise almost everywhere on $L^p$, $p\in (1,\infty)$, for certain
multiparameter polynomial ergodic averages in the spirit of Dunford
and Zygmund for continuous flows. We will pay special attention to
quantitative aspects of pointwise convergence phenomena from the point
of view of uniform oscillation estimates for multiparameter polynomial Radon
averaging operators. In the proof of our main result we develop
flexible Fourier methods that exhibit and handle the so-called
\textit{``parameters-gluing''} phenomenon, an obstruction that arises in studying 
oscillation and variation inequalities for multiparameter polynomial
Radon operators. We will also discuss connections of our main result with 
a multiparameter variant of the Bellow--Furstenberg problem.

\end{abstract}

\thanks{Dariusz Kosz was supported by the Basque Government grant BERC 2022-2025, by the Spanish State Research Agency grant CEX2021-001142-S and by the National Science Centre of Poland grant SONATA BIS 2022/46/E/ST1/00036. Mariusz Mirek was partially supported by the NSF grant DMS-2154712 and by the NSF CAREER grant DMS-2236493. Pawe{\l} Plewa acknowledges the financial support of Compagnia di San Paolo.}

\dedicatory{\Large{In memory of Jacek Zienkiewicz our collaborator, friend and teacher.}}

\maketitle

\setcounter{tocdepth}{1}
\tableofcontents

\section{Introduction} \label{sec:1}

\subsection{Main result}

In 1951, Dunford \cite{D} and Zygmund \cite{Z} independently extended
Birkhoff’s pointwise ergodic theorem \cite{Bir} to the multiparameter
setting with multiple measure-preserving transformations. Their result
for measure-preserving flows can be formulated as the following theorem.

\begin{theorem}[Dunford \cite{D} and Zygmund \cite{Z}]
\label{thm:1}
Let $(X,\mathcal B(X), \mu)$ be a $\sigma$-finite measure space
equipped with a family of $k\in\ZZ_+$ measure-preserving flows
$(T_1^{t_1})_{t_1\in\RR},\ldots, (T_k^{t_k})_{t_k\in\RR} \colon X\to X$,
not necessarily commuting, and such that the mapping
$X \times \RR^k\ni (x,t_1,\ldots, t_k) \mapsto T_1^{t_1}\cdots T_k^{t_k}x\in X$
is measurable.  Then for every $p\in(1, \infty)$ and every
$f\in L^p(X)$ the limit for the Dunford--Zygmund averages
\begin{align}
\label{eq:10}
\lim_{\min\{M_1,\ldots, M_k\}\to\infty}\frac{1}{M_1 \cdots M_k}\int_0^{M_1}\cdots\int_0^{M_k}f\big(T_1^{t_1}\cdots T_k^{t_k}x\big)\dif t_k\ldots \dif t_1
\end{align}
exists $\mu$-almost everywhere on $X$ and in $L^p(X)$ norm.
\end{theorem}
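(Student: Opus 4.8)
The plan is to prove the theorem by induction on the number $k$ of flows, with Wiener's continuous ergodic theorem as the base case $k=1$; commutativity will never be used. For $j\in\{1,\dots,k\}$ and $M>0$ write $A^{(j)}_M h(x):=\frac1M\int_0^M h(T_j^{t}x)\,\dif t$ for the one-parameter average along the $j$-th flow and $\mathcal{M}^{(j)} h:=\sup_{M>0}A^{(j)}_M|h|$ for the associated maximal operator. By Fubini's theorem and the joint measurability hypothesis, the average in \eqref{eq:10} equals $\mathsf{A}_{\vec M}f:=A^{(k)}_{M_k}A^{(k-1)}_{M_{k-1}}\cdots A^{(1)}_{M_1}f$, and its inner block $A^{(k-1)}_{M_{k-1}}\cdots A^{(1)}_{M_1}f$ is precisely the Dunford--Zygmund average of $f$ for the $k-1$ flows $T_1,\dots,T_{k-1}$; thus the inductive structure is built into the object, and one only has to understand how a single extra average $A^{(k)}_{M_k}$ interacts with the $(k-1)$-parameter limit.

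Two ingredients would be set up first. \emph{(i) The multiparameter maximal inequality.} By Calderón's transference principle each $\mathcal{M}^{(j)}$ is, after transference, dominated by the Hardy--Littlewood maximal operator on $\RR$, hence bounded on $L^p(X)$ for every $p\in(1,\infty)$; since each $A^{(j)}_M$ is a positive $L^1$--$L^\infty$ contraction with $|A^{(j)}_M h|\le\mathcal{M}^{(j)}|h|$, iterating gives $\sup_{\vec M}|\mathsf{A}_{\vec M}f|\le\mathcal{M}^{(k)}\cdots\mathcal{M}^{(1)}|f|$, so that $\big\|\sup_{\vec M}|\mathsf{A}_{\vec M}f|\big\|_{L^p}\lesssim_{k,p}\|f\|_{L^p}$. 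This is the only place where $p>1$ is essential (the $L^1$ statement is false for $k\ge 2$). \emph{(ii) The base case $k=1$.} This is Wiener's theorem: the maximal inequality for $\mathcal{M}^{(1)}$ together with trivial, indeed uniform, a.e.\ convergence on the dense class spanned by $T_1$-invariant functions and coboundaries $h-h\circ T_1^{s}$ with $h\in L^\infty\cap L^p$ — for which $A^{(1)}_M(h-h\circ T_1^{s})=\frac1M\big(\int_0^{s}h\circ T_1^{t}\,\dif t-\int_M^{M+s}h\circ T_1^{t}\,\dif t\big)\to 0$ uniformly — yields, via the Banach principle, a.e.\ convergence $A^{(1)}_M f\to P_1 f$ for every $f\in L^p$; convergence also holds in $L^p$, since $|A^{(1)}_M f|\le\mathcal{M}^{(1)}|f|\in L^p$ and dominated convergence applies.

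For the inductive step, assume the theorem for $k-1$ flows, fix $f\in L^p$, and set $g_{\vec M'}:=A^{(k-1)}_{M_{k-1}}\cdots A^{(1)}_{M_1}f$, $\vec M'=(M_1,\dots,M_{k-1})$, so $\mathsf{A}_{\vec M}f=A^{(k)}_{M_k}g_{\vec M'}$. By hypothesis $g_{\vec M'}\to g_\infty$ a.e.\ as $\min_{i<k}M_i\to\infty$, and $|g_{\vec M'}|\le\mathcal{M}^{(k-1)}\cdots\mathcal{M}^{(1)}|f|=:G_\ast\in L^p$, hence $|g_\infty|\le G_\ast$. Split
\[
\mathsf{A}_{\vec M}f-P_k g_\infty=\big(A^{(k)}_{M_k}g_\infty-P_k g_\infty\big)+A^{(k)}_{M_k}\big(g_{\vec M'}-g_\infty\big)=:S_{M_k}+R_{\vec M}.
\]
The term $S_{M_k}$ tends to $0$ a.e.\ and in $L^p$ as $M_k\to\infty$ by the base case applied to $g_\infty$. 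The remainder $R_{\vec M}$ is the heart of the matter, and the step I expect to be the main obstacle: it involves the limits $M_k\to\infty$ and $\min_{i<k}M_i\to\infty$ simultaneously, applied to the \emph{moving} function $g_{\vec M'}$, so a naive $\limsup$ does not close. The resolution combines the maximal inequality with monotone/dominated convergence: $|R_{\vec M}|\le\mathcal{M}^{(k)}|g_{\vec M'}-g_\infty|$, hence with $G_N:=\sup_{\min_{i<k}M_i>N}|g_{\vec M'}-g_\infty|$ one has $\sup_{\min_i M_i>N}|R_{\vec M}|\le\mathcal{M}^{(k)}G_N$; since $G_N\downarrow 0$ a.e.\ (inductive a.e.\ convergence) and $G_N\le 2G_\ast\in L^p$, dominated convergence gives $\|G_N\|_{L^p}\to 0$, so $\big\|\sup_{\min_i M_i>N}|R_{\vec M}|\big\|_{L^p}\le\|\mathcal{M}^{(k)}\|_{L^p\to L^p}\|G_N\|_{L^p}\to 0$, and a decreasing sequence of nonnegative functions with vanishing $L^p$ norms tends to $0$ a.e. Thus $\limsup_{\min_i M_i\to\infty}|R_{\vec M}|=0$ a.e., which with $S_{M_k}\to 0$ gives $\mathsf{A}_{\vec M}f\to P_k g_\infty$ a.e.; and $\|R_{\vec M}\|_{L^p}\le\|g_{\vec M'}-g_\infty\|_{L^p}\to 0$ (again dominated convergence) gives convergence in $L^p$. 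This closes the induction. The remaining points — transference of the one-flow maximal and ergodic theorems from $\RR$, and the routine measurability reductions allowing all suprema to be taken over countable parameter sets — are classical.
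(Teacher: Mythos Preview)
Your proof is correct. Both you and the paper begin with the same iterated maximal inequality $\sup_{\vec M}|\mathsf{A}_{\vec M}f|\le\mathcal{M}^{(k)}\cdots\mathcal{M}^{(1)}|f|$, but you diverge at the second step. The paper's sketch (see the third remark after Theorem~\ref{thm:main}) follows the classical route: once the maximal inequality is in hand, one verifies convergence on a dense class built from the Riesz decomposition $L^p = \{T_j\text{-invariants}\}\oplus\overline{\{h-h\circ T_j^{s}\}}$ applied iteratively, and then invokes the Banach principle. You instead run a direct induction on $k$: assuming the $(k{-}1)$-parameter theorem produces a limit $g_\infty$, you split $\mathsf{A}_{\vec M}f-P_kg_\infty$ into a one-parameter term $S_{M_k}$ handled by the base case and a remainder $R_{\vec M}$ whose tail supremum is dominated by $\mathcal{M}^{(k)}G_N$ with $G_N\downarrow 0$ in $L^p$. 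This sidesteps the need to exhibit any dense class, trading the Riesz decomposition for a monotone-convergence argument on the decreasing dominants $\mathcal{M}^{(k)}G_N$. Your approach is slightly more self-contained; the paper's is perhaps more transparent about what the limit actually is, since the Riesz decomposition identifies it as an iterated conditional expectation from the outset.
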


For $k=1$ Theorem \ref{thm:1} coincides with the classical Birkhoff
pointwise ergodic theorem \cite{Bir} on $L^p(X)$ for
$p\in[1, \infty)$. However, when $k\ge2$, Theorem \ref{thm:1} is
evidently false at the endpoint $p=1$. This contrasts sharply with the
situation $k=1$ of Birkhoff's ergodic theorem, where pointwise
convergence in \eqref{eq:10} holds for general $f\in L^1(X)$. This
phenomenon is due to the fact that the averages in \eqref{eq:10} are
taken over rectangles $[0, M_1]\times \cdots \times [0, M_k]$ with
unrestricted side lengths $M_1,\ldots, M_k\in\RR_+$ and consequently
unrestricted convergence is allowed in Theorem \ref{thm:1} meaning
that $\min\{M_1,\ldots, M_k\}\to\infty$.  Such rectangles behave
differently than cubes $[0, M]^k$, i.e. when $M_1=\cdots=M_k=M$, for
which pointwise convergence in $L^1(X)$ holds for commuting flows by
Wiener's ergodic theorem \cite{W}. The situation here is similar to the
classical discussion on the failure at the endpoint $p=1$ of the Lebesgue
differentiation theorem for averages over rectangles with sides
parallel to the coordinate axes, see \cite{bigs} for a detailed discussion.

\medskip

Our main result, a polynomial extension of Theorem \ref{thm:1} (see
Theorem \ref{thm:main} below), is motivated by a recent progress that has
been made in \cite{BMSW} towards understanding a multiparameter
variant of the Bellow--Furstenberg problem (see Conjecture \ref{con:1}
below), as well as some investigations of uniform oscillation
estimates for multiparameter Radon averaging operators that arose in
\cite[Remarks below Theorem 6.37]{BMSW}.

\medskip

Before we formulate our main result we need to set up useful notation
and terminology.  Throughout the article the triple
$(X, \mathcal B(X), \mu)$ denotes a $\sigma$-finite measure space.
A measure-preserving transformation $T \colon X \to X$ is a map satisfying $\mu(T^{-1}[E]) = \mu(E)$ for each $E \in \mathcal B(X)$. A flow or $\RR$-flow $(T^t)_{t\in\RR} \colon X\to X$  is a group action of $\RR$ on $X$. In other words, a flow 
 is a collection of measure-preserving transformations $(T^t)_{t\in\RR}$ such that $T^sT^t=T^{s+t}$ for all $s, t\in\RR$ assuming that $T^0$ is the identity map  ${\rm id}$ of $X$. In order to avoid problems with measurability we will always assume that the mapping $X \times \RR\ni (x,t) \mapsto T^{t}x\in X$
is measurable. If we work with $d\in\ZZ_+$ measure-preserving $\RR$-flows
$(T_1^{t_1})_{t_1\in\RR},\ldots, (T_d^{t_d})_{t_d\in\RR} \colon X\to X$, we will always require 
that the mapping
\begin{align}
\label{eq:11}
X \times \RR^d\ni (x,t_1,\ldots, t_d) \mapsto T_1^{t_1}\cdots T_d^{t_d}x\in X
\end{align}
is measurable. In some cases, like in Theorem \ref{thm:1}, instead of group action, we will be allowed to consider semigroup actions of $\RR_+$ on $X$ and the conclusion remains unchanged. We will see more examples below.

The space of all formal $k$-variate polynomials
$P({\rm t}_1, \ldots, {\rm t}_k)$ with $k\in\ZZ_+$ indeterminates
${\rm t}_1, \ldots, {\rm t}_k$ and coefficients in a ring $\mathbb K$
will be denoted by $\mathbb K[{\rm t}_1, \ldots, {\rm t}_k]$. Each
polynomial $P\in\mathbb K[{\rm t}_1, \ldots, {\rm t}_k]$ will be also
identified with a function
$(t_1,\ldots, t_k)\mapsto P(t_1,\ldots, t_k)$ from $\mathbb K^k$ to
$\mathbb K$.  If ${\mathcal P} = \{P_1,\ldots, P_d\} \subset \mathbb K[\mathrm t_1, \ldots, \rm t_k]$, then $\deg \mathcal P=\max\{\deg P_j : j\in[d]\}$, where $\deg P_j$ is the degree of the polynomial $P_j\in \mathbb K[{\rm t}_1, \ldots, {\rm t}_k]$, and  $[N] \coloneqq  (0, N]\cap\ZZ$ for all $N\in\RR_+$.
We will mainly use $\mathbb K=\RR$ or $\mathbb K=\ZZ$ in this paper. 

\medskip

Let $d, k \in\ZZ_+$, and given a collection
${\mathcal T} = \{(T_1^{t_1})_{t_1\in\RR},\ldots, (T_d^{t_d})_{t_d\in\RR}\}$  of commuting
$\RR$-flows on $X$ satisfying \eqref{eq:11}, a locally integrable function $f$
on $X$, polynomials
${\mathcal P} = \{P_1,\ldots, P_d\} \subset \RR[\mathrm t_1, \ldots, \rm t_k]$, and a
vector $M = (M_1,\ldots, M_k)\in\RR_+^k$, we define a continuous  multiparameter polynomial ergodic
average by
\begin{equation} \label{eq:operator}
\mathcal A_{{M}; X, {\mathcal T}}^{\mathcal P}f(x)\coloneqq  
\frac{1}{M_1 \cdots M_k} \int_{\prod_{j\in[k]}[0, M_j]} f(T_1^{P_1(t)} \cdots T_d^{P_d(t)}x) \, \dif t, \qquad x\in X.
\end{equation}
 We will often abbreviate $\calA_{M; X, {\mathcal T}}^{{\mathcal P}}$ to  $\calA_{M; X}^{{\mathcal P}}$ when the collection $\mathcal T$ is understood. Sometimes, 
depending on how explicit we want to be, we will write out the averages
\[
\calA_{M;X, \mathcal T}^{\mathcal P}f=\calA_{M_1,\ldots, M_k;X, \mathcal T}^{P_1,\ldots, P_d} f .
\]
Using this notation we see that the average from \eqref{eq:10} coincides with 
$\calA_{M_1,\ldots, M_k;X, \mathcal T}^{{\rm t}_1, \ldots, {\rm t}_k}$; we simply take $P_1(t_1,\ldots, t_k)=t_1,\ldots, P_k(t_1,\ldots, t_k)=t_k$ in  definition \eqref{eq:operator}.

By \eqref{eq:11} we can deduce two things. If 
$f \in L^p(X)$ with $p \in [1,\infty]$, then for $\mu$-almost every $x \in X$ the function  $\RR^k_+\ni M \mapsto \mathcal A_{{M}; X, {\mathcal T}}^{\mathcal P}f(x)$ is well defined and continuous on $\RR^k_+$, while for every $M \in \RR_+^k$ the function $X\ni x \mapsto \mathcal A_{{M}; X, {\mathcal T}}^{\mathcal P}f(x)$ is measurable and belongs to $L^p(X)$.

The main purpose of the paper is to prove an extension of the Dunford--Zygmund theorem for averages \eqref{eq:operator}
in place of \eqref{eq:10} in Theorem \ref{thm:1} by establishing multiparameter uniform oscillation estimates, see Section \ref{sec:notation} for the definition of oscillation seminorms. Our main ergodic theorem reads as follows.

\begin{theorem}[Quantitative multiparameter ergodic theorem for flows]
\label{thm:main}
Let $(X, \mathcal B(X), \mu)$ be a $\sigma$-finite \linebreak measure space with a family 
${\mathcal T} = \{(T_1^{t_1})_{t_1\in\RR},\ldots, (T_d^{t_d})_{t_d\in\RR}\}$ of commuting
$\RR$-flows on $X$ satisfying \eqref{eq:11}. Let $d, k\in\ZZ_+$ and polynomials ${\mathcal P} = \{P_1,\ldots, P_d\} \subset \RR[\mathrm t_1, \ldots, \rm t_k]$ be given. Let $f\in L^p(X)$ for some $p\in [1, \infty]$, and let $\calA_{M_1,\ldots, M_k; X, {\mathcal T}}^{{\mathcal P}}f$ be the multiparameter ergodic  average defined  in \eqref{eq:operator}.
\begin{itemize}
\item[\rm(i)] \textit{(Mean ergodic theorem)} If $p\in(1, \infty)$, then the averages
$\calA_{M_1,\ldots, M_k; X, {\mathcal T}}^{{\mathcal P}}f$ converge in $L^p(X)$ norm as $\min\{M_1,\ldots, M_k\}\to\infty$.

\item[\rm(ii)] \textit{(Pointwise ergodic theorem)} If $p\in(1, \infty)$, then the averages
$\calA_{M_1,\ldots, M_k; X, {\mathcal T}}^{{\mathcal P}}f$ converge pointwise almost everywhere on $X$ as $\min\{M_1,\ldots, M_k\}\to\infty$.

\item[\rm(iii)] \textit{(Maximal ergodic theorem)}
If $p\in (1,\infty]$, then one has
\begin{align}
\label{eq:1}
\big\|\sup_{M_1,\ldots, M_k\in\RR_+}|\calA_{M_1,\ldots, M_k; X, {\mathcal T}}^{{\mathcal P}}f|\big\|_{L^p(X)}\lesssim_{p, \deg\mathcal P}\|f\|_{L^p(X)}.
\end{align}

\item[\rm(iv)] \textit{(Uniform oscillation ergodic theorem)}
If $p\in(1, \infty)$, then one has
\begin{align}
\label{eq:2}
\qquad \qquad\sup_{J\in\ZZ_+}\sup_{I\in\mathfrak S_J(\RR_+^k) }\big\|O_{I, J}^2(\calA_{M_1,\ldots, M_k; X, {\mathcal T}}^{{\mathcal P}}f: M_1, \ldots, M_k\in\RR_+)\big\|_{L^p(X)}\lesssim_{p, \deg\mathcal P}\|f\|_{L^p(X)},
\end{align}
 see Section \ref{sec:notation} for a definition of the oscillation seminorm $O_{I, J}^2$. The implicit constant in \eqref{eq:1} and \eqref{eq:2} may depend on $p$ and $\deg\mathcal P$, but not on the coefficients of the polynomials from $\mathcal P$.
\end{itemize}
\end{theorem}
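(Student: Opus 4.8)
The plan is to reduce all four assertions to quantitative estimates for the associated \emph{continuous multiparameter polynomial Radon averages} on $\RR^d$ and then transfer back. Since the flows in $\mathcal T$ commute and satisfy the joint measurability \eqref{eq:11}, the family $\mathcal T$ induces a measure-preserving action of the group $(\RR^d,+)$ on $X$ via $(s_1,\ldots,s_d)\cdot x=T_1^{s_1}\cdots T_d^{s_d}x$, and Calder\'on's transference principle reduces the maximal inequality \eqref{eq:1} and the uniform oscillation inequality \eqref{eq:2} to the same estimates, with the same constants, for the model operators
\[
\mathcal M_M^{\mathcal P}f(x)\coloneqq\frac{1}{M_1\cdots M_k}\int_{\prod_{j\in[k]}[0,M_j]}f\big(x-(P_1(t),\ldots,P_d(t))\big)\,\dif t,\qquad x\in\RR^d .
\]
Moreover, a uniform oscillation bound of the form \eqref{eq:2} automatically forces both $L^p$-norm convergence and pointwise almost everywhere convergence of the underlying averages, so parts (i)--(ii) follow once (iv) is established; thus the whole theorem reduces to proving \eqref{eq:1} and \eqref{eq:2} for $\mathcal M_M^{\mathcal P}$ on $L^p(\RR^d)$. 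It is also convenient to normalize first, subtracting the constant terms of the $P_j$ (a harmless translation) and recording only the finite set of monomials $t^\gamma$ with $0<|\gamma|\le\deg\mathcal P$ that actually occur; the cardinality of this set depends on $\deg\mathcal P$ and $k$ alone, which is ultimately the source of the coefficient-free constants.

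The maximal estimate (iii) is the soft part. Since $|\calA_{M;X,\mathcal T}^{\mathcal P}f|\le\calA_{M;X,\mathcal T}^{\mathcal P}|f|$, after transference it suffices to bound $\sup_M\mathcal M_M^{\mathcal P}|f|$ on $L^p(\RR^d)$, and this is dominated pointwise by a composition $\mathfrak M_k\cdots\mathfrak M_1|f|$, where $\mathfrak M_i$ is the one-parameter Stein--Wainger polynomial maximal operator along the curve $t_i\mapsto(P_1(t),\ldots,P_d(t))$ with the remaining variables frozen. Each $\mathfrak M_i$ is bounded on $L^p(\RR^d)$ for $p\in(1,\infty]$ with a constant depending only on $d$ and $\deg\mathcal P$ (uniformly in the frozen variables and in the coefficients), which gives (iii) directly for every $p\in(1,\infty]$. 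The uniform oscillation estimate (iv), by contrast, is the substantive part and needs Fourier analysis. Writing $\widehat{\mathcal M_M^{\mathcal P}f}=m_M^{\mathcal P}\,\widehat f$ with
\[
m_M^{\mathcal P}(\xi)=\frac{1}{M_1\cdots M_k}\int_{\prod_{j\in[k]}[0,M_j]}\ex{(\xi_1P_1(t)+\cdots+\xi_dP_d(t))}\,\dif t ,
\]
the core is to prove two coefficient-uniform multiplier estimates, with constants depending only on $\deg\mathcal P$, $d$, $k$: (a) a van der Corput / stationary-phase bound giving polynomial decay of $m_M^{\mathcal P}(\xi)$ once a suitably defined ``effective frequency'' attached to the box $\prod[0,M_j]$ and to the monomial exponents is large; and (b) a quantitative smoothness bound near the origin, of the form $|m_M^{\mathcal P}(\xi)-1|\lesssim(\text{effective frequency})$, together with control of the $M$-gradient $\partial_M m_M^{\mathcal P}$.

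Granting (a) and (b), I would prove (iv) at $p=2$ as follows: for a fixed admissible sequence $I\in\mathfrak S_J(\RR_+^k)$, a telescoping reduces $O_{I,J}^2(\mathcal M_M^{\mathcal P}f:M)$ to (1) the oscillation along the $k$-fold dyadic subsequence $\{2^n:n\in\ZZ^k\}$, controlled by a square-function estimate over the dyadic scales that feeds on the decay in (a) and on a geometric series in a relative-scale parameter, and (2) the short oscillations within individual dyadic boxes of $\RR_+^k$, summed in $\ell^2$ over the boxes and handled via the $M$-smoothness in (b) together with a Rademacher--Menshov / Sobolev-embedding argument. The full range $p\in(1,\infty)$ then follows by interpolation for oscillation seminorms, using the maximal bound (iii) and duality, and (i)--(ii) follow from (iv) as noted above.

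The main obstacle is precisely the \emph{parameters-gluing} phenomenon highlighted in the abstract, and it lives inside steps (a)--(b) and the decomposition just described. The $k$ scales $M_1,\ldots,M_k$ and the $d$ frequency variables $\xi_1,\ldots,\xi_d$ are coupled through the monomials: rescaling a single $M_i$ simultaneously rescales the contribution of each $\xi_j$ by a power of $M_i$ that depends on which monomials of $P_j$ involve the variable $t_i$. Consequently one cannot tensor the one-parameter polynomial Radon theory across the $k$ parameters, nor decouple the $d$ target directions, and a naive product Littlewood--Paley decomposition does not suffice. Choosing the right notion of ``effective frequency'' and the right (non-product) frequency decomposition so that the gluing of parameters on the geometric side is matched on the analytic side --- and making the decay and smoothness estimates for $m_M^{\mathcal P}$ robust under \emph{all} the simultaneous rescalings that occur along a general sequence $I$ --- is where the real work lies; the transference, the reduction to monomials, the maximal bound (iii), and the derivation of (i)--(ii) from (iv) are, by contrast, routine.
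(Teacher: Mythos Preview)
Your reductions --- transference to the Radon model $\mathcal M_M^{\mathcal P}$ on $\RR^d$, the lifting to coefficient-one monomials, and the derivation of (i)--(iii) from (iv) --- match the paper exactly. The gaps are in the two places you label as routine or as ``where the real work lies'' without actually doing it.

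\medskip
\textbf{The composition argument for (iii) fails.} The pointwise bound $\sup_M\mathcal M_M^{\mathcal P}|f|\le \mathfrak M_k\cdots\mathfrak M_1|f|$ is only valid when each $\mathfrak M_i$ is a \emph{fixed} operator, i.e.\ when the polynomials separate variables as in \eqref{eq:8}. For monomials such as $P_1(t)=t_1t_2$, $P_2(t)=t_1t_2^2$, freezing $t_1$ gives the curve $t_2\mapsto(t_1t_2,t_1t_2^2)$, so the inner maximal operator depends on $t_1$; after taking the inner supremum you are left averaging a $t_1$-dependent family of operators, not a single function to which the next $\mathfrak M$ can be applied. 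The Stein--Wainger bound is uniform in coefficients as an \emph{operator norm}, not as a pointwise majorant. The paper does not attempt a direct proof of (iii): it follows from (iv) via \eqref{eq:domin}, and inside the proof of (iv) the needed maximal bound for the error piece is obtained by a bootstrap (see the verification of \ref{cond:abs1} in Section~\ref{sec:rad}), not by iteration.

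\medskip
\textbf{The core of (iv) is missing two structural ideas.} First, the split ``long variations along $\{2^n:n\in\ZZ^k\}$ / short variations in dyadic boxes of $\RR_+^k$'' is precisely what the parameters-gluing obstruction destroys: if $r\coloneqq\operatorname{rank}\mathbb M^{\mathcal P}<k$, the map $s\mapsto\mathcal P(s)$ collapses the $k$-dimensional dyadic grid, the long-variation square function has infinitely many repeated terms, and no Littlewood--Paley decomposition in $k$ directions can match the scaling. The paper's resolution (Lemma~\ref{lem:splitting}) is to choose $r$ independent rows of $\mathbb M^{\mathcal P}$, build an $r$-dimensional dyadic grid in the image of $\mathcal P^r$, and pull it back to cubes $Q_n\subseteq\RR_+^k$ indexed by $n\in\ZZ^r$; the Littlewood--Paley projections $\calS_n$ act only in the $r$ chosen frequency coordinates. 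Second, the paper does \emph{not} attack the full multiplier $m_M^{\mathcal P}$ directly. It runs induction on $d$: an inclusion--exclusion splits $\mathfrak m_s^{\mathcal P}$ into a main part $\sum_{\emptyset\ne D\subseteq[d]}(-1)^{|D|+1}\mathfrak m_s^{\mathcal P,D}$ (tensor products of lower-dimensional Radon pieces and bumps, handled by the inductive hypothesis and \eqref{thm:9}) and an error $\tilde{\mathfrak m}_s^{\mathcal P}$ that enjoys the two-sided decay \eqref{cond:2} in \emph{every} coordinate. Only this error piece is fed into the abstract $2$-variation Theorem~\ref{thm:2}, and your ``van der Corput plus smoothness plus Rademacher--Menshov'' package becomes the verification of \ref{cond:abs5} and \ref{cond:abs6} there. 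Finally, the passage from $p=2$ to $p\in(1,\infty)$ is not by interpolation of oscillation seminorms (which is not available off the shelf); the paper uses the Duoandikoetxea--Rubio de Francia bootstrap of Lemmas~\ref{lem:duo+rubiom}--\ref{lem:duo+rubio:squarem} to trade the maximal bound and $L^2$ off-diagonal decay for $L^{p_1}$ square-function control.
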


Some remarks about Theorem \ref{thm:main} are in order.

\begin{enumerate}[label*={\arabic*}.]

\item As it was shown in \cite[Proposition 2.8]{MSzW} oscillation estimate \eqref{eq:2} from item (iv) implies pointwise convergence from (ii). We also know  that oscillation inequality \eqref{eq:2} implies the maximal one \eqref{eq:1}, since by \cite[Proposition 2.6]{MSzW} we have the following estimate
\begin{gather}
\nonumber
\big\|\sup_{M_1,\ldots, M_k\in\RR_+}|\calA_{M_1,\ldots, M_k; X, {\mathcal T}}^{{\mathcal P}}f|\big\|_{L^p(X)}
\le \sup_{M_1,\ldots, M_k\in\RR_+}\big\|\calA_{M_1,\ldots, M_k; X, {\mathcal T}}^{{\mathcal P}}f\big\|_{L^p(X)}\\ 
+
\sup_{J\in\ZZ_+}\sup_{I\in\mathfrak S_J(\RR_+^k) }\big\|O_{I, J}^2(\calA_{M_1,\ldots, M_k; X, {\mathcal T}}^{{\mathcal P}}f: M_1, \ldots, M_k\in\RR_+)\big\|_{L^p(X)}.
\label{eq:domin}
\end{gather}
Thus the conclusion from item (iv) implies the conclusion from item
(iii). Inequality \eqref{eq:1} from item (iii) was established by
Ricci and Stein in \cite{RS} (see also \cite{duo} for some special
cases) for the shift operators, which in view of the Calder{\'o}n
transference principle \cite{Ca} is equivalent to inequality
\eqref{eq:1} for arbitrary commuting $\RR$-flows.  Although the
arguments in this paper are inspired by those in \cite{duo, RS}, we
will present a new, flexible approach to overcome several new
difficulties that arise in the more general setting of estimating
uniform oscillations, which were not at all apparent in the maximal
function or singular integral estimates \cite{duo, RS}. Finally, the
conclusions from items (ii) and (iii) combined with the dominated
convergence theorem yield the norm convergence from item (i). This
reduces the matter to proving oscillation inequality \eqref{eq:2}.

\item If each polynomial takes only positive values $P_j \colon \RR^k\to \RR_+$, then  in  Theorem \ref{thm:main}, instead of group action, we are allowed to consider semigroup actions of $\RR_+$ on $X$ and the conclusion remains unchanged. In other words, we are allowed to consider in  Theorem \ref{thm:main} a family 
${\mathcal T} = \{(T_1^{t_1})_{t_1\in\RR_+},\ldots, (T_d^{t_d})_{t_d\in\RR_+}\}$  of commuting
$\RR_+$-flows on $X$ satisfying \eqref{eq:11}.

\item Theorem \ref{thm:main} extends Theorem \ref{thm:1} to the
polynomial orbits in the class of commuting flows $\mathcal T$.  If
$d=k\in\ZZ_+$ and $P_1(t_1,\ldots, t_k)=t_1,\ldots, P_k(t_1,\ldots, t_k)=t_k$, then
$\calA_{M_1,\ldots, M_k;X, \mathcal T}^{P_1,\ldots, P_d}=\calA_{M_1,\ldots, M_k;X, \mathcal T}^{{\rm t}_1, \ldots, {\rm t}_k}$
coincides with the average in \eqref{eq:10}. In fact, Dunford \cite{D}
and Zygmund \cite{Z} theorems have very simple proofs based on a
two-step procedure. In the first step  maximal inequality \eqref{eq:1} is proved with 
$\calA_{M_1,\ldots, M_k;X, \mathcal T}^{{\rm t}_1, \ldots, {\rm t}_k}$ in place of $\calA_{M_1,\ldots, M_k; X, {\mathcal T}}^{{\mathcal P}}$ by noting that
\begin{align*}
\calA_{M_1,\ldots, M_k;X, \mathcal T}^{{\rm t}_1, \ldots, {\rm t}_k}
=
\calA_{M_k;X, \mathcal T_k}^{{\mathrm t}_k}\circ\cdots\circ\calA_{M_1;X, \mathcal T_1}^{{\mathrm t}_1}, \qquad
{\mathcal T}_j = \{(T_j^{t_j})_{t_j\in\RR}\}, \quad  j\in[k],
\end{align*}
and using iteratively $L^p(X)$ bounds (with $p\in(1, \infty]$) for
$\sup_{M_j\in\RR_+}|\calA_{M_j;X, T_j}^{\mathrm t_j}f(x)|$ for
$j\in[d]$. The order of composition in this identity  is important since the flows ${\mathcal T} = \{(T_1^{t_1})_{t_1\in\RR},\ldots, (T_d^{t_d})_{t_d\in\RR}\}$  do not need to
commute.    The second step is based on a suitable adaptation of the
telescoping argument (relying on the Riesz decomposition \cite{Riesz})  to the multiparameter setting and an application of the classical Birkhoff ergodic theorem to verify the pointwise convergence on some dense class of functions, see for instance \cite{Nevo, MSzW}   for more
details. 

\item The two-step procedure may also be used to establish pointwise convergence when $\deg \mathcal P\ge2$, but then there is no obvious candidate for a dense class to verify convergence in the second step. Here a remedy to overcome this obstacle is the multiparameter oscillation seminorm that not only implies, but also quantifies, pointwise convergence for averages \eqref{eq:operator}, see \cite[Proposition 2.8]{MSzW} for more details. This is a very efficient strategy to handle such problems in ergodic context.

\item Multiparameter oscillations \eqref{eq:2} were considered for
the first time in \cite{JRW} in the context of the Dunford--Zygmund
averages \eqref{eq:10} for commuting measure-preserving
transformations. In \cite{JRW} as well as in the proof of Theorem
\ref{thm:main} it is crucial that the flows from
${\mathcal T} = \{(T_1^{t_1})_{t_1\in\RR},\ldots, (T_d^{t_d})_{t_d\in\RR}\}$
commute. It is interesting to know if \eqref{eq:2} holds for
$\calA_{M_1,\ldots, M_k;X, \mathcal T}^{{\rm t}_1, \ldots, {\rm t}_k}$
with noncommutative flows $\mathcal T$.

\item Multiparameter oscillations \eqref{eq:2} for $d=k\ge2$ with $\calA_{M_1,\ldots, M_k;X, \mathcal T}^{P_1({\mathrm t}_1), \ldots, P_k({\rm t}_k)}$ and
arbitrary one-variable polynomials $\{P_1, \ldots, P_k\}\subset\RR[\rm t]$
were recently established in \cite{MSzW}. Theorem
\ref{thm:main}(i)--(iii) is attributed to Bourgain, though he has
never published this result. The third author learned about this
result from Bourgain in October 2016, which initiated some
investigations \cite{BMSW} towards understanding a multiparameter
variant of the Bellow \cite{Bel} and Furstenberg \cite{Fur}
problem. More about this problem will be discussed below, see Conjecture \ref{con:1}.
A key observation in Bourgain's argument, like in the proofs of Dunford and Zygmund theorems, 
is the identity
\begin{align}
\label{eq:8}
\calA_{M_1,\ldots, M_k;X, \mathcal T}^{P_1({\mathrm t}_1), \ldots, P_k({\rm t}_k)}= \calA_{M_1;X, \mathcal T_1}^{P_1({\rm t}_1)}\circ\cdots\circ\calA_{M_k;X, \mathcal T_k}^{P_k({\rm t}_k)}, \qquad
{\mathcal T}_j = \{(T_j^{t_j})_{t_j\in\RR}\}, \quad  j\in[k],
\end{align}
which was also critical in establishing \eqref{eq:2} in \cite{MSzW}
for $\calA_{M_1,\ldots, M_k;X, \mathcal T}^{P_1({\mathrm t}_1), \ldots, P_k({\rm t}_k)}$ and commutative
flows
${\mathcal T} = \{(T_1^{t_1})_{t_1\in\RR},\ldots, (T_k^{t_k})_{t_k\in\RR}\}$.
Finally, let us mention that in \cite{MSzW} only discrete averaging
operators have been discussed (i.e. integrals in \eqref{eq:operator}
are replaced by sums) but the arguments from \cite{MSzW} are
adaptable line by line in our context of flows along polynomials.

\item A simple modification of counterexamples from the Bergelson--Leibman paper \cite{BL} readily shows that the commutation assumption cannot be dropped in Theorem \ref{thm:main} if $\deg\mathcal P\ge2$. It may even fail in the
one-parameter situation for the averages of the form $\calA_{M_1,\ldots, M_k;X, \mathcal T}^{P_1,\ldots, P_d}$ when $M_1=\cdots=M_k$.
\item In general, if $d, k\ge2$, the ranges of parameters for which we have $L^p(X)$ bounds in Theorem \ref{thm:main} are sharp. There is an interesting exception, if $d=1$ and $k=2$ it was shown by Patel \cite{Pat1} that the maximal function corresponding to the  averaging operator from \eqref{eq:operator} is of weak type $(1, 1)$ and consequently these averages converge pointwise almost everywhere on $L^1(X)$. It is expected that the same should be true if $d=1$ and $k\ge2$. It is also interesting to know whether we have weak type $(1,1)$ bounds in \eqref{eq:2} if $d=1$ and $k\ge2$.
In the one-parameter case,  if $d=k=1$, by \cite{CRW} it is  known that the maximal function corresponding to the  averaging operator from \eqref{eq:operator} is of weak type $(1, 1)$, though the one-parameter $M_1=\cdots=M_k$ and  multidimensional case $d\ge2$ and $k\in\ZZ_+$ is widely open, see \cite{STW} and the references given there.

\end{enumerate}

As we have seen above, the multiparameter theory for averages \eqref{eq:operator}
with orbits along polynomials with separated variables as in
\eqref{eq:8} is well understood and can be easily derived from the
one-parameter theory \cite{MSzW}, but the situation of
genuinely $k$-variate polynomials is much more challenging. In general setting, it seems
that there is no simple procedure (like changing variables or interpreting
the average from \eqref{eq:operator} as a composition of simpler
one-parameter averages as in \eqref{eq:8}) that would lead us  to the situation where pointwise convergence is known.
Therefore, in this paper we continue  systematic investigations of multiparameter
oscillation seminorms \cite{BMSW, JRW, MSzW}, which are especially interesting 
in the context of operators \eqref{eq:operator} with genuinely $k$-variate polynomials
${\mathcal P} = \{P_1,\ldots, P_d\} \subset \RR[\mathrm t_1, \ldots, \rm t_k]$.

Currently, it seems that the oscillation
seminorm (this is in marked contrast to the one-parameter theory) is the only available tool that allows us to handle
efficiently multiparameter pointwise convergence problems. See Section \ref{sec:notation} for the definitions of oscillation and $\rho$-variation seminorms. Although in this paper we made an effort to build a basic theory of multiparameter $2$-variations, see Theorem \ref{thm:2} and Lemma \ref{lem:splitting}, 
to the best of our knowledge, it is still open how to define
appropriate multiparameter jumps or $\rho$-variations, which would
incorporate all important properties of one-parameter jump or variation seminorms, see \cite{JKRW, jsw, Lep, MSZ1, MSzW}, even at the martingales level, 
and would be useful
in multiparameter pointwise convergence problems.

\subsection{Continuous averages  vs discrete averages}
Theorem \ref{thm:main} arose from recent attempts undertaken by the third author with Bourgain, Stein and Wright  
\cite{BMSW} towards understanding a multiparameter variant of Bellow
\cite{Bel} and Furstenberg \cite{Fur} problem, which can be subsumed
under the following conjecture.

\begin{conjecture}
\label{con:1}
Let $d, k\in\ZZ_+$ be given and let $(X, \mathcal B(X), \mu)$ be a probability measure space endowed with a family  of invertible commuting  measure-preserving transformations $T_1,\ldots, T_d \colon X\to X$. Assume that $P_1,\ldots, P_d \in\ZZ[{\rm m}_1, \ldots, {\rm m}_k]$. Then for any $f\in L^{\infty}(X)$ the multiparameter polynomial ergodic averages
\begin{align}
\label{eq:13}
A_{M_1,\ldots, M_k; X, T_1,\ldots, T_d}^{P_1, \ldots, P_d}f(x)\coloneqq \frac{1}{M_1 \cdots M_k}\sum_{m\in \prod_{j\in[k]}[M_j]}f(T_1^{P_1(m)}\cdots T_d^{P_d(m)}x)
\end{align}
converge for $\mu$-almost every $x\in X$, as  $\min\{M_1,\ldots, M_k\}\to\infty$.
\end{conjecture}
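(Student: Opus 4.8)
We do not settle Conjecture~\ref{con:1} here, but we describe the route by which we expect it to be reached, in the spirit of the continuous theory of this paper. Since $(X,\mathcal B(X),\mu)$ is a probability space, $L^{\infty}(X)\subseteq L^p(X)$ for every $p\in(1,\infty)$, so it is enough to establish the full discrete analogue of Theorem~\ref{thm:main}: a.e.\ convergence on $\ell^p$, $p\in(1,\infty)$, for the averages \eqref{eq:13} --- norm convergence in $L^2$ for a single function being classical (spectral theorem together with Weyl's equidistribution theorem for $k$-variate real polynomials on boxes), so that the entire difficulty lies in the pointwise statement. Concretely, the plan is to prove the discrete uniform oscillation inequality
\[
\sup_{J\in\ZZ_+}\ \sup_{I\in\mathfrak{S}_J(\RR_+^k)}\Big\|O_{I,J}^2\big(A_{M_1,\ldots,M_k;\,\ZZ^{d},\,S_1,\ldots,S_d}^{P_1,\ldots,P_d}f:M_1,\ldots,M_k\in\RR_+\big)\Big\|_{\ell^p(\ZZ^{d})}\lesssim_{p,\deg\mathcal{P}}\|f\|_{\ell^p(\ZZ^{d})}
\]
for the coordinate shifts $S_1,\ldots,S_d$ on $\ZZ^{d}$; by the Calder\'on transference principle and \cite[Proposition 2.8]{MSzW} this yields Conjecture~\ref{con:1}. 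For $k=1$ the sequence $m\mapsto S_1^{P_1(m)}\cdots S_d^{P_d(m)}$ is a polynomial orbit in $\ZZ^{d}$ and the inequality already lies within reach of the theory of discrete polynomial Radon averages; the genuinely multiparameter case $k\ge 2$ with $k$-variate $\mathcal P$ carries all of the difficulty.

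The second step is to transplant to the lattice setting the Fourier scheme used here for \eqref{eq:operator}. The multiplier of $A_{M;\ZZ^{d}}^{\mathcal P}$ is decomposed by the Hardy--Littlewood circle method adapted to the box $[M_1]\times\cdots\times[M_k]$: on the \emph{major arcs} it is approximated by a superposition, over rationals organised in the manner of Ionescu--Wainger, of Gauss-sum factors times the continuous multiplier of the corresponding averaging operator $\mathcal{A}_{M;\RR^{d}}^{\mathcal P}$ on $L^p(\RR^{d})$, to which the estimates of the present paper --- in particular the treatment of the ``parameters-gluing'' phenomenon --- apply after transference between $\ell^p(\ZZ^{d})$ and $L^p(\RR^{d})$ by the Magyar--Stein--Wainger sampling principle; on the \emph{minor arcs} one invokes multivariate Weyl-type inequalities to gain a small negative power of $\min\{M_1,\ldots,M_k\}$, uniformly in the coefficients of $\mathcal P$. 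Summing the resulting multiparameter Littlewood--Paley and oscillation pieces --- long oscillations through the Ionescu--Wainger projections and a lifting argument, short oscillations through square-function and $\rho$-variation estimates, as in this paper --- should produce the displayed inequality.

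The main obstacle is the interaction between the ``parameters-gluing'' phenomenon highlighted here and the arithmetic of the discrete problem. When $\mathcal P$ couples the variables $\mathrm t_1,\ldots,\mathrm t_k$, the rational periods governing the major-arc behaviour of $A_{M;\ZZ^{d}}^{\mathcal P}$ on a sub-box depend on which of the side-lengths $M_j$ are large, so the relevant denominator sets themselves change as the parameters vary independently; reconciling this with a single Ionescu--Wainger family of frequencies --- whose construction is not naturally multiparameter --- while keeping every bound \emph{independent of the coefficients of $\mathcal P$}, is where the difficulty concentrates. A further complication, already visible for flows, is that for $k\ge 2$ the box $[M_1]\times\cdots\times[M_k]$ cannot be collapsed into a single polynomial Radon average over a larger lattice, so one-parameter discrete results cannot be used as a black box; carrying the argument through appears to require a genuinely $k$-parameter version of the number-theoretic input, together with a robust multiparameter jump or $\rho$-variation theory --- even at the martingale level --- of the kind discussed after Theorem~\ref{thm:main}.
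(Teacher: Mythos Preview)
The statement you were given is Conjecture~\ref{con:1}, which the paper explicitly leaves open: it is stated as a conjecture, not a theorem, and the surrounding discussion says that it ``is widely open'' beyond the case $d=1$ settled in \cite{BMSW}. There is therefore no proof in the paper to compare against. You correctly recognise this and do not claim a proof; instead you offer a programmatic outline (Calder\'on transference to $\ZZ^d$, circle-method decomposition, Ionescu--Wainger projections for the major arcs coupled with the continuous estimates of this paper, multivariate Weyl bounds on the minor arcs) and you identify the central obstruction --- the interaction of the parameters-gluing phenomenon with the arithmetic structure of the major arcs --- which matches the paper's own remark that ``obstructions of arithmetic nature in estimating exponential sums'' are what currently block the general case.

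Your sketch is a reasonable roadmap and is consistent with how \cite{BMSW} handles $d=1$ and with the role the paper envisions for Theorem~\ref{thm:main} as ``an important ingredient'' in any eventual proof. One should only temper a couple of points: the claim that norm convergence in $L^2$ is ``classical'' via the spectral theorem and Weyl equidistribution is not quite right in the genuinely multiparameter regime $k\ge 2$ with unrestricted $\min\{M_1,\ldots,M_k\}\to\infty$ --- this is part of what makes the problem hard, not a routine preliminary --- and the assertion that the $k=1$ case ``already lies within reach'' understates that it is in fact completely settled (see remark~3 after Conjecture~\ref{con:1} and \cite{MSZ3,MSS}). But since no proof was expected or possible here, these are minor calibration issues in an otherwise sensible discussion.
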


Conjecture \ref{con:1} arose in the late 1980's when Bourgain completed his
groundbreaking papers \cite{B1, B2, B3} establishing polynomial pointwise
ergodic theorem. This conjecture was also motivated by seeking a common
generalization of the results of Dunford \cite{D} and Zygmund \cite{Z}
on the one hand (which generalize Birkhoff's original theorem
\cite{Bir}) and the equidistribution theorem of Arkhipov, Chubarikov and Karatsuba \cite{ACK} on the
other hand (which generalize Weyl's theorem \cite{Weyl} to multiple polynomials of several
variables).

Some remarks about Conjecture \ref{con:1} are in order.

\begin{enumerate}[label*={\arabic*}.]
\item If $d=k=1$ and $P_{1}(m)=m$, then Conjecture \ref{con:1} follows
from Birkhoff's ergodic theorem \cite{Bir}, which establishes the
almost everywhere convergence (as well as the norm convergence
\cite{vN}) for all functions $f\in L^p(X)$, with $p\in [1,\infty)$,
defined on any $\sigma$-finite measure space
$(X, \mathcal B(X), \mu)$.

\item If $d=k=1$ and $P_{1}\in\ZZ[\rm m]$ is arbitrary, then Conjecture
\ref{con:1} was solved by Bourgain \cite{B1, B2, B3} in the mid 1980's
giving an affirmative answer to the famous open problem of Bellow
\cite{Bel} and Furstenberg \cite{F}. In fact, Bourgain showed that the
almost everywhere convergence (as well as the norm convergence, see
also \cite{F}) holds for all functions $f\in L^p(X)$, with
$p\in(1, \infty)$, defined on any $\sigma$-finite measure space
$(X, \mathcal B(X), \mu)$. In contrast to the Birkhoff theorem, if
$P_{1}\in\ZZ[\rm m]$ and $\deg P_1\ge2$, then the pointwise
convergence at the endpoint for $p=1$ may fail as was shown by
Buczolich and Mauldin \cite{BM} for $P_{1}(m)=m^2$ and by LaVictoire
\cite{LaV1} for $P_{1}(m)=m^k$ for any $k\ge2$. This also contrasts
sharply with the continuous situation $d=k=1$ with arbitrary $P\in\RR[\rm t]$  for flows ${\mathcal T} = \{(T^{t})_{t\in\RR}\}$  where the corresponding maximal function
$\sup_{M\in\RR_+}|\calA_{M; X, {\mathcal T}}^{P({\rm t})}f|$ 
is of weak type $(1, 1)$ by \cite{CRW}. The simplest reason explaining this difference is that
the real line $\RR$ is dilation invariant and  many of the change of variables techniques which are
available for integrals are not available for sums. This can be easily seen for $P({t})={t}^2$, if $f\ge0$ and $f\in L^1(X)$ then by a simple change of variables one has
\begin{align*}
\sup_{M\in\RR_+}|\calA_{M; X, {\mathcal T}}^{{\rm t}^2}f|\lesssim \sup_{M\in\RR_+}|\calA_{M; X, {\mathcal T}}^{{\rm t}}f|
\end{align*}
and the weak type $(1, 1)$ boundedness follows \cite{bigs}. In the
discrete setting \eqref{eq:13} such a simple argument is not
available; in fact, not even true by \cite{BM, LaV1}.

\item If $d,k\in\ZZ_+$ and $P_{1},\ldots, P_{d}\in\ZZ[{\rm m}_1, \ldots, {\rm m}_k]$ are arbitrary polynomials and $M_1=\cdots=M_k$, i.e. one-parameter multidimensional situation, then
Conjecture \ref{con:1} follows for instance from \cite{MSZ3, MSS}. These papers also provide the sharpest possible quantitative information (jump, variation and uniform oscillation estimates) about the pointwise convergence phenomena giving, in particular,  
the almost everywhere convergence (as well as the norm convergence) for all functions $f\in L^p(X)$, with $p\in (1, \infty)$, defined on any $\sigma$-finite measure space $(X, \mathcal B(X), \mu)$.

\item As we have seen above, a genuinely multiparameter case $d=k\ge2$ of Conjecture
\ref{con:1} with linear orbits,
i.e. $P_{j}(m_1, \ldots, m_d)=m_j$ for $j\in[d]$, was established
independently by Dunford \cite{D} (for discrete averages \eqref{eq:13})  and Zygmund \cite{Z} (for continuous averages \eqref{eq:operator}) in the early
1950's. It also follows from \cite{D, Z} that the
almost everywhere convergence (as well as the norm convergence) holds for all functions $f\in L^p(X)$, with
$p\in(1, \infty)$, defined on any $\sigma$-finite measure space
$(X, \mathcal B(X), \mu)$ equipped with a not necessarily commuting family of measure-preserving
transformations $T_1,\ldots, T_d \colon X\to X$. One also knows that pointwise convergence fails if $p=1$.

\item Conjecture \ref{con:1} in the case $d=k\ge2$ with
$P_{1}(m_1, \ldots, m_k)\in \ZZ[{\rm m}_1],\ldots, P_{k}(m_1, \ldots, m_k)\in \ZZ[{\rm m}_k]$
for commuting measure-preserving transformations
$T_1,\ldots, T_d \colon X\to X$ is attributed to Bourgain.  Although
Bourgain's result has never been published, a simple argument
recovering Bourgain's result (in fact in the spirit of Bourgain's
original proof) was recently published in \cite{MSzW}. 

\item We emphasize that Conjecture \ref{con:1} has been recently 
solved by  the third author with Bourgain, Stein and Wright  \cite{BMSW} for $d=1$, $k\in\ZZ_+$ and for arbitrary $k$-variate polynomials $P_1\in\ZZ[{\rm m}_1, \ldots, {\rm m}_k]$. The methods of proof from \cite{BMSW} develop many new sophisticated methods  including  estimates for multiparameter exponential sums, and introduce new
ideas from the so-called multiparameter circle method in the context of the geometry of backwards
Newton diagrams that are dictated by the shape of the polynomials defining  ergodic averages from \eqref{eq:13}.
\end{enumerate}

Interestingly, Dunford theorem \cite{D} (for discrete averages \eqref{eq:13})  and Zygmund theorem \cite{Z} (for continuous averages \eqref{eq:operator})  are equivalent in the sense that one result can be deduced from the other and vice versa. This can be seen at the level of comparing corresponding maximal functions for suitably chosen measure-preserving systems and flows. If $\deg\mathcal P\ge2$, then there is no easy way to compare averages \eqref{eq:operator} with averages \eqref{eq:13}. This obstruction to a large extent motivates this paper on the one hand.

On the other hand, Conjecture \eqref{con:1} is widely open and even
though it was recently established for $d=1$, $k\in\ZZ_+$ in
\cite{BMSW} it is not clear if it is true in general case
$d, k\in\ZZ_+$ for all polynomials
$P_1,\ldots, P_d \in\ZZ[{\rm m}_1, \ldots, {\rm m}_k]$. In \cite{BMSW}
we encountered some obstructions of arithmetic nature in estimating
exponential sums corresponding to \eqref{eq:13}, which prevented us
from attacking Conjecture \ref{con:1} for all $d, k\in\ZZ_+$ and
general polynomials. This knowledge was a starting point to understand
whether the obstacles arising in \cite{BMSW} can be overcame for the
integral averages \eqref{eq:operator} associated with flows.  Beside
the results from \cite{BMSW}, our main result, Theorem \ref{thm:main},
gives some evidence (being an important ingredient in the proof) that
Conjecture \ref{con:1} may be true. We plan to pursue investigation
in this direction in the near future.
\subsection{General reductions and strategy of the proof}
We begin with certain reductions which will simplify the proof of
Theorem \ref{thm:main} or more precisely uniform oscillation estimates
\eqref{eq:2}. We claim that it suffices to prove Theorem
\ref{thm:main} for any finite family
$\mathcal P\subset\RR[\mathrm t_1, \ldots, \rm t_k]$, which consists
of monomials all having coefficient one.  Of course, if Theorem
\ref{thm:main} holds for arbitrary finite family
$\mathcal P\subset\RR[\mathrm t_1, \ldots, \rm t_k]$ then it holds for
any finite family of monomials all having coefficient one. Our aim
will be to reverse this implication.

We fix
$\mathcal P=\{P_1,\ldots, P_d\}\subset\RR[\mathrm t_1, \ldots, \rm t_k]$ and
${\mathcal T} = \{(T_1^{t_1})_{t_1\in\RR},\ldots, (T_d^{t_d})_{t_d\in\RR}\}$. Note that for every 
$\mathcal P\subset\RR[\mathrm t_1, \ldots, \rm t_k]$ there exists a finite set of multiindices $\Gamma\subseteq (\{0\}\cup[d_0])^k$ for some $d_0\in\ZZ_+$ such that 
\[
P_{j}(t) = \sum_{\gamma\in\Gamma} a_{j,\gamma} t^{\gamma}, \qquad t\in\RR^k, \quad j\in[d],
\]
for some coefficients $a_{j,\gamma}\in\RR$. Here we used the notation $t^{\gamma}\coloneqq  t_1^{\gamma_1}\cdots t_k^{\gamma_k}$ for any $t=(t_1, \ldots, t_k)\in\RR^k$. Define $T_{\gamma}^s \coloneqq  \prod_{j\in[d]}T_{j}^{a_{j,\gamma}s}$ for any $\gamma\in\Gamma$ and $s\in\RR$. Now using commutativity of $\mathcal T$ we see that
\[
\prod_{j\in[d]}T_j^{P_j(t)}
=\prod_{j\in[d]}\prod_{\gamma\in\Gamma}T_j^{a_{j,\gamma} t^{\gamma}}
=\prod_{\gamma\in\Gamma}T_{\gamma}^{t^{\gamma}}.
\]
Setting $\mathcal P_{\Gamma}\coloneqq \{t^\gamma: \gamma\in\Gamma\}$ and
$\mathcal T_{\Gamma}\coloneqq \{(T_{\gamma}^{s})_{s\in\RR_+}: \gamma\in\Gamma\}$
one has that
$\calA_{M_1,\ldots, M_k; X, {\mathcal T}}^{{\mathcal P}}f=\calA_{M_1,\ldots, M_k; X, {\mathcal T_{\Gamma}}}^{{\mathcal P_{\Gamma}}}f$,
which shows that it suffices to prove Theorem \ref{thm:main} (or more
precisely inequality \eqref{eq:2}) for any finite family of monomials
all having coefficient one. In fact, we performed a lifting argument, which originates in \cite{bigs}.

Before we describe our second reduction, certain transference, let us consider the following example.

\begin{example}\label{ex:1}
Important examples of \eqref{eq:operator} from the point of view of
pointwise convergence problems and various questions in harmonic analysis are
multiparameter Radon averaging operators. For given $d, k\in\ZZ_+$, and a locally integrable function $f \colon \RR^d\to \CC$,  polynomials ${\mathcal P} = \{P_1,\ldots, P_d\} \subset \RR[\mathrm t_1, \ldots, \rm t_k]$, and a
vector $M = (M_1,\ldots, M_k)\in\RR_+^k$ the multiparameter Radon averaging operator is defined by setting
\begin{align}
\label{eq:12}
\mathcal M_{M}^{\mathcal P}f(x)\coloneqq  
\frac{1}{M_1 \cdots M_k} \int_{\prod_{j\in[k]}[0, M_j]} f(x_1-P_1(t), \ldots, x_d-P_d(t)) \, \dif t, \qquad x=(x_1,\ldots, x_d)\in \RR^d.
\end{align}
Considering the
$d$-dimensional Euclidean space $(\RR^d, \mathcal L(\RR^d), \mu_{\RR^d})$ with Lebesgue  $\sigma$-algebra and Lebesgue measure and 
a collection of shifts $(S_1^{t_1})_{t_1\in\RR},\ldots, (S_d^{t_d})_{t_d\in\RR} \colon \RR^d\to\RR^d$,
defined by
$S_j^{t_j}(x) \coloneqq x-t_je_j$ for every $x\in\RR^d$ and $t_j\in\RR$ (here $e_j$ is $j$-th basis
vector from the standard basis in $\RR^d$ for each $j\in[d]$), we see that 
$\calA_{M; \RR^d, {\mathcal T}}^{{\mathcal P}}$ with ${\mathcal T} = \{(T_1^{t_1})_{t_1\in\RR},\ldots, (T_d^{t_d})_{t_d\in\RR}\}=\{(S_1^{t_1})_{t_1\in\RR},\ldots, (S_d^{t_d})_{t_d\in\RR}\}$ coincides with 
$\mathcal M_{M}^{\mathcal P}$.

\end{example}

There is now an extensive literature concerning one-parameter
operators of Radon type ($k=1$ in \eqref{eq:12}) in the context of
maximal estimates \cite{CRW, CNSW, duo&rubio,bigs} as well as jump,
oscillation and variation inequalities \cite{JKRW, jsw, MSZ1,
bootstrap, MSzW} that (as it is well known \cite{B3, jsw}) quantify
pointwise convergence phenomena in $L^p$ spaces, see Section
\ref{sec:notation} for the definitions of oscillations and variations
and the recent survey \cite{MSzW} for a more detailed exposition of
the subject. Situation is dramatically different with multiparameter
operators of Radon type ($k\ge2$ in \eqref{eq:12}). Although
multiparameter maximal functions and singular integrals have been
studied for many years \cite{CWW, CWW1, duo, Pat1, Pat2, Pat3, RS,
bigs, Street}, the context of quantitative bounds (like jump,
oscillation and variation inequalities) in the multiparameter
setting (see also Section~\ref{sec:notation} for definitions) seems
as a completely new and highly unexplored territory \cite{BMSW, JRW,
MSzW}. This paper is the first treatment developing this theory for
multiparameter operators of Radon type.

Multiparameter Radon operators $\mathcal M_{M}^{\mathcal P}$
will play an essential role in the proof of Theorem \ref{thm:main}. Taking into account Example \ref{ex:1}, and using   the
Calder{\'o}n transference principle \cite{Ca} (see \cite[Proposition 5.1]{BMSW1} for a similar argument and more details), inequality \eqref{eq:2} can be reduced to the corresponding uniform oscillation inequality for Radon operators $\mathcal M_{M}^{\mathcal P}$. From now on we will focus on proving our second main result of this paper, which implies Theorem \ref{thm:main} and reads as follows. 

\begin{theorem}[Uniform oscillation estimates for multiparameter Radon operators]
\label{thm:3}
Suppose that
$\mathcal P=\{P_1,\ldots, P_d\}\subset\RR[\mathrm t_1, \ldots, \rm t_k]$
is a collection of monomials all having coefficient one.  Then for any
$p \in (1, \infty)$ and the family of multiparameter Radon operators
$\{\mathcal M^{\PPP}_M : M\in\RR_+^k\}$ defined in \eqref{eq:12} the
following uniform oscillation inequality holds
\begin{equation}
\label{eq:thm3}
	\sup_{J\in\ZZ_+}\sup_{I \in \mathfrak S_J(\RR^k_+)}
	\big\| O^2_{I, J} \big( \mathcal M^{\PPP}_M f : M\in\RR^k_+\big) \big\|_{L^p(\RR^d)} \lesssim_{p, \deg\PPP} \| f\|_{L^p(\RR^d)}, \qquad f\in L^p(\RR^d). 
	\end{equation}
The implicit constant in \eqref{eq:thm3} depends only on $p$ and the degree of $\PPP$.
\end{theorem}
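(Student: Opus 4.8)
The plan is to follow the Fourier-analytic route from one-parameter polynomial Radon theory, but organised around the Newton-diagram geometry of the monomials of $\PPP$ so as to meet the parameters-gluing head on. Write $P_j(t)=t^{\gamma^{(j)}}$ with $\gamma^{(j)}\in(\{0\}\cup[d_0])^k$; the directions with $\gamma^{(j)}=0$ give fixed translations and may be discarded. Then $\mathcal M^\PPP_Mf=f*\dif\sigma_M$, where $\dif\sigma_M$ is the normalised pushforward of Lebesgue measure on $\prod_{j\in[k]}[0,M_j]$ under $t\mapsto(P_1(t),\ldots,P_d(t))$, and after rescaling $\widehat{\dif\sigma_M}(\xi)=\Phi_\PPP\big(\xi_1M^{\gamma^{(1)}},\ldots,\xi_dM^{\gamma^{(d)}}\big)$, with $M^\gamma=\prod_{i\in[k]}M_i^{\gamma_i}$ and $\Phi_\PPP(\eta)=\int_{[0,1]^k}\ex{\sum_{j\in[d]}\eta_js^{\gamma^{(j)}}}\,\dif s$ a fixed oscillatory integral, $\Phi_\PPP(0)=1$. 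The first, standard step is to bound $O^2_{I,J}(\mathcal M^\PPP_Mf:M\in\RR^k_+)$ by a \emph{long oscillation} along the dyadic lattice $\{2^{\mathbf n}:\mathbf n\in\ZZ^k\}$ plus a \emph{short} multiparameter $2$-variation term measuring the motion of $M\mapsto\mathcal M^\PPP_Mf$ inside the dyadic boxes $[2^{\mathbf n},2^{\mathbf n+\mathbf 1})$, with $\mathbf 1=(1,\ldots,1)$.

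For the short term I would invoke the multiparameter $2$-variation theory of Theorem~\ref{thm:2} and the splitting lemma, Lemma~\ref{lem:splitting}, to dominate the short multiparameter $2$-variation over a box by a sum over coordinates $i\in[k]$ of one-parameter short $2$-variations in the $M_i$-direction, the remaining $k-1$ scales being absorbed into a multiparameter maximal function for $\{\mathcal M^\PPP_M\}$ (established by the same methods, its model part being the classical strong maximal function). After freezing the other scales and regarding $M_i\mapsto\mathcal M^\PPP_Mf$ as a one-parameter polynomial Radon average with the remaining scales as dilation parameters, each one-parameter short $2$-variation reduces to the classical one-parameter short-variation estimate for Radon averages, uniformly in the frozen parameters thanks to the monomial structure. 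For continuous averages one may instead treat the short term via a fundamental-theorem-of-calculus plus Sobolev-embedding argument applied to $\partial_{M_i}\mathcal M^\PPP_Mf$, itself a weighted Radon-type average.

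The long oscillation is the heart of the matter. On the dyadic lattice I would carry out a major/minor-arc decomposition tailored to $\PPP$: for $M=2^{\mathbf n}$ split frequency space according to the sizes $|\xi_j|\,2^{\mathbf n\cdot\gamma^{(j)}}$, $j\in[d]$. On the major arc, where all of these are $\lesssim1$, Taylor expansion of $\Phi_\PPP$ gives $\widehat{\dif\sigma_{2^{\mathbf n}}}(\xi)=\chi_{\mathbf n}(\xi)+O\big(\max_j|\xi_j|2^{\mathbf n\cdot\gamma^{(j)}}\big)$, with comparable control of $\mathbf n$-differences, where $\chi_{\mathbf n}(\xi)=\prod_{j\in[d]}\psi\big(\xi_j2^{\mathbf n\cdot\gamma^{(j)}}\big)$ for a fixed cutoff $\psi$, $\psi(0)=1$; so $\mathcal M^\PPP_{2^{\mathbf n}}$ on this region is compared with the model operator $\mathcal Q_{\mathbf n}$ of convolution with the tensor product of one-dimensional smooth averages of scales $2^{\mathbf n\cdot\gamma^{(j)}}$ in the $x_j$-variables. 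The point to stress is that \emph{the scales of $\mathcal Q_{\mathbf n}$ are already glued}: its $j$-th factor lives at the monomial scale $2^{\mathbf n\cdot\gamma^{(j)}}$, so even the model is not a tensor product of one-parameter averaging families in $\mathbf n$. Its long oscillation in $\mathbf n\in\ZZ^k$ is thus handled by multiparameter martingale theory --- $\mathcal Q_{\mathbf n}$ being essentially the conditional expectation onto anisotropic dyadic cubes of those sidelengths --- with Lemma~\ref{lem:splitting} reducing the multiparameter oscillation to one-parameter martingale oscillations in each coordinate (classical), up to maximal functions in the others. The error $\mathcal M^\PPP_{2^{\mathbf n}}-\mathcal Q_{\mathbf n}$ carries a power gain $\big(\min_j|\xi_j|2^{\mathbf n\cdot\gamma^{(j)}}\big)^{\delta}$ on the major arc; on the minor arcs, where $\max_j|\xi_j|2^{\mathbf n\cdot\gamma^{(j)}}\gtrsim1$, van der Corput and stationary-phase bounds for $\Phi_\PPP$ give $|\widehat{\dif\sigma_{2^{\mathbf n}}}(\xi)|\lesssim\big(\max_j|\xi_j|2^{\mathbf n\cdot\gamma^{(j)}}\big)^{-\delta}$ with $\delta=\delta(\deg\PPP)>0$, and likewise for $\mathbf n$-differences. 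Organising these gains into dyadic families of pieces and summing over dyadic frequency blocks and $\mathbf n\in\ZZ^k$ via Plancherel yields, for each piece, an $\ell^2(\ZZ^k)$-valued $L^2(\RR^d)$ bound with a fixed gain; interpolating against the uniform ($\ell^\infty$-valued) $L^p$ bound for the pieces coming from the maximal estimate and summing the resulting geometric series gives $L^p(\RR^d)$ for every $p\in(1,\infty)$. Combining the long and short pieces with the first reduction proves \eqref{eq:thm3}.

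The main obstacle I anticipate is precisely the parameters-gluing. For a genuinely $k$-variate $\PPP$ the major/minor-arc transition for a fixed frequency block $\{|\xi_j|\sim2^{-l_j}\}$ occurs along the polyhedral boundary $\{\mathbf n:\mathbf n\cdot\gamma^{(j)}\le l_j\text{ for all }j\}$, whose faces --- and hence the order in which the double sum over $\mathbf n$ and over frequency blocks must be organised --- are dictated by the backwards Newton diagram of $\PPP$ rather than by coordinate hyperplanes. The $\mathbf n$-sum cannot be decoupled coordinatewise; one must stratify it along the faces of that diagram, at each face isolating the correct effective lower-parameter sub-average and the right number of free scales to absorb into maximal functions, all while keeping the van der Corput bounds on the minor arcs and the martingale oscillation bounds for $\mathcal Q_{\mathbf n}$ scale-invariant, with constants depending only on $\deg\PPP$. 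Making the Fourier analysis flexible enough to run uniformly across every admissible configuration of exponents $\Gamma$ is the crux of the argument.
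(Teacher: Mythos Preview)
Your outline captures the right Fourier-analytic ingredients (van der Corput decay, comparison with a smooth model, square-function/interpolation), but there is a genuine gap at exactly the point you flag as the main obstacle, and your proposed fix does not close it.

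You index the long part by the full lattice $\mathbf n\in\ZZ^k$ and aim for an $\ell^2(\ZZ^k)$-valued $L^2$ bound via Plancherel. This cannot work when $r\coloneqq\mathrm{rank}\,\mathbb M^{\PPP}<k$: the multiplier $\Phi_\PPP\big(\xi_1 2^{\mathbf n\cdot\gamma^{(1)}},\ldots,\xi_d 2^{\mathbf n\cdot\gamma^{(d)}}\big)$ depends on $\mathbf n$ only through the linear map $\mathbf n\mapsto(\mathbf n\cdot\gamma^{(1)},\ldots,\mathbf n\cdot\gamma^{(d)})$, whose image is an $r$-dimensional sublattice. Hence your error pieces (and your model $\mathcal Q_{\mathbf n}$) are constant along $(k-r)$-dimensional cosets in $\ZZ^k$, so any $\ell^2(\ZZ^k)$ sum of their squares is infinite. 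Stratifying the major/minor-arc transition along Newton-diagram faces addresses the \emph{frequency} geometry but does nothing about this redundancy in the \emph{scale} parametrisation; you still have to sum over $\mathbf n\in\ZZ^k$. The same degeneracy obstructs your plan to treat $\mathcal Q_{\mathbf n}$ by ``multiparameter martingale theory in $\mathbf n$'': there is no genuine $k$-parameter filtration when the effective number of scales is $r$.

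The paper resolves this by parametrising long variations by an $r$-dimensional grid from the outset: pick $r$ linearly independent rows $R_{i_1},\ldots,R_{i_r}$, set $\PPP^r(s)=(P_{i_1}(s),\ldots,P_{i_r}(s))$, and index long scales by $n\in\ZZ^r$ via representatives $s(n)$ with $\PPP^r(s(n))={\bf 2}^n$. The nontrivial content of Lemma~\ref{lem:splitting} is that this $r$-dimensional dyadic family, together with cubes $Q_n$ of bounded eccentricity in $\RR^k_+$, still controls the full $k$-parameter $2$-variation. With the correct $\ZZ^r$ indexing the Littlewood--Paley operators $\calS_n$ can be chosen on the $r$ frequency axes $\xi_{i_1},\ldots,\xi_{i_r}$, and the off-diagonal decay \eqref{eq:off-diag} together with \eqref{eq:sum-aj} becomes a convergent $\ZZ^r$-sum. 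Your proposal never passes from $\ZZ^k$ to $\ZZ^r$, and this is the missing idea.

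There is a second structural difference worth noting. The paper does not compare $\calM^\PPP$ directly with a single smooth model $\mathcal Q_{\mathbf n}$; instead it runs an inclusion--exclusion over subsets $D\subseteq[d]$ (equation \eqref{eq:mudecomp}), producing an error $\tilde{\mathfrak m}^\PPP_s$ with two-sided decay \eqref{cond:2} in \emph{every} coordinate, and main terms $\mathfrak m^{\PPP,D}_s$ that are tensor products of a lower-dimensional Radon multiplier with bumps. The main terms are then handled by induction on $d$ (Proposition in \S4.7) together with the bump oscillation estimate \eqref{thm:9}; the error term is fed into the abstract Theorem~\ref{thm:2}. Your single model-plus-error split corresponds roughly to the case $D=[d]$ only, and without the inductive layer you would have to redo the entire analysis for each face of the Newton diagram by hand --- which is what your last paragraph gestures at, but does not carry out.
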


We now briefly outline the key steps of the proof of Theorem \ref{thm:3}.
We fix a collection of monomials $\mathcal P=\{P_1,\ldots, P_d\}\subset\RR[\mathrm t_1, \ldots, \rm t_k]$
all having coefficient one. The proof is by induction on $d$.
We decompose 
\[
\calM^{\PPP}_M = {\rm main}(\calM^{\PPP}_M) + {\rm error}(\calM^{\PPP}_M),
\]
where ${\rm main}(\calM^{\PPP}_M)$ and ${\rm error}(\calM^{\PPP}_M)$ are convolution operators whose respective Fourier multipliers are $\sum_{D \subseteq [d] : D \neq \emptyset} (-1)^{|D|+1} \mathfrak m^{\PPP, D}_M$ and $\tilde{\mathfrak m}^\PPP_M$, see the decomposition
\eqref{eq:mudecomp}.
This decomposition has a combinatorial nature and  arises from a suitable application of the  inclusion--exclusion procedure for tensor products of lower-dimensional Radon operators and appropriately adjusted convolution operators acting on the remaining variables. For example, for $d=2$, it is imitated by the following symbolic identity
\[
\underbrace{{\rm R}_1 {\rm R}_2}_{\calM^{\PPP}_M} =  \underbrace{( {\rm C}_1 {\rm R}_2+ {\rm R}_1 {\rm C}_2 - {\rm C}_1 {\rm C}_2)}_{{\rm main}(\calM^{\PPP}_M)}+ \underbrace{({\rm R}_1 - {\rm C}_1)({\rm R}_2 - {\rm C}_2)}_{{\rm error}(\calM^{\PPP}_M)},  
\]
where the symbol ${\rm R}_i$ indicates that the lower-dimensional Radon operator occupies the $i$-th variable, while ${\rm C}_i$ represents the action of a convolution operator on the $i$-th variable. By induction we obtain
\begin{equation*}
	\sup_{J\in\ZZ_+}\sup_{I \in \mathfrak S_J(\RR^k_+)}
	\big\| O^2_{I, J} \big( {\rm main}(\calM^{\PPP}_M)f: M\in\RR^k_+\big) \big\|_{L^p(\RR^d)} \lesssim_{p, \deg\PPP} \| f\|_{L^p(\RR^d)}, \qquad f\in L^p(\RR^d). 
\end{equation*}      
The case $d=1$ follows by an argument in the spirit of \cite{MSzW}, where uniform oscillation estimates for bumps are critical. Although proving the inductive step requires some work, the most complicated part is to estimate the error term. Fortunately, the error term has some cancellations with respect to all axes, see \eqref{cond:2}, which with the aid of Theorem \ref{thm:2}  will imply the following inequality for $2$-variations
\begin{equation*} 
\big\| V^2 \big({\rm error}(\calM^{\PPP}_M)f : M\in\RR^k_+\big) \big\|_{L^p(\RR^d)} \lesssim_{p, \deg\PPP} \| f\|_{L^p(\RR^d)}, \qquad f\in L^p(\RR^d),
\end{equation*}
which is even larger object than the oscillation seminorm, see Section~\ref{sec:notation} for appropriate definitions.

Here we develop a basic theory of multiparameter variation estimates, see in particular Theorem \ref{thm:2} and Lemma \ref{lem:splitting}. To this end, we split the variation seminorm into long and short variations. A precursor of this splitting for $d = k = 1$ is a well-known numerical inequality
\begin{align}
\label{eq:14}
V^2(a_s : s \in \RR_+) \lesssim V^2(a_s : s \in \DD) + \Big( \sum_{n \in \ZZ} V^2 \big(a_s : s \in [2^n, 2^{n+1}) \big) ^2 \Big)^{\frac{1}{2}},
\end{align}
where $\DD \coloneqq  \{ 2^n : n \in \ZZ\}$ is the set of dyadic numbers. In  general,  the problem is much more subtle since
we may encounter the so-called \textit{``parameters-gluing''} phenomenon, a certain obstruction which prevents us from a naive 
application of the Littlewood--Paley theory to control long variations. To be more precise, recall that we have been working with $d$ monomials from $\mathcal P$, which determine certain ``directions''. A naive thought suggests that for each direction one should perform an appropriate Littlewood--Paley decomposition to control long variations by a square function. This idea is efficient if we have $d$ non-degenerate  directions, but fails if the number of directions is less than $d$. 
To overcome this difficulty  we introduce an auxiliary parameter $r \coloneqq  {\rm rank}(\mathbb{M}^{\PPP})$, where $\mathbb{M}^{\PPP}$ is a matrix of size $d \times k$ whose rows are the exponents of the monomials from $\PPP$.
There is a one-to-one correspondence between these exponents and the ``directions'', and the number of nondegenerate  directions is $r\le \min\{d, k\}$. Hence, $r$ measures how many exponents or directions have been glued together and explains the \textit{``parameters-gluing''} phenomenon.  

Once $r$ has been defined, we perform a multiparameter
counterpart of splitting from \eqref{eq:14} for appropriately defined
$r$-dimensional dyadic grid, see Lemma \ref{lem:splitting}.  It is
especially challenging to show that the resulting $r$-dimensional
objects can control the $k$-dimensional variation seminorm even if
$r < k$. This is the core of our argument. Then the long variations
are handled by the square function techniques, the
Littlewood--Paley theory adjusted to the $r$ non-degenerate
``directions'', and bootstrapping arguments from \cite{duo&rubio,
bootstrap}. On the other hand, a multiparameter
Rademacher--Menshov argument, see inequality \eqref{eq:num},
allows us to control the short variations on dyadic cubes
by a countable family of square functions. Finally, we use a delicate
bootstrap--interpolation technique developed in \cite{bootstrap} to
bound these square functions.

\subsection{Structure of the paper} \label{sec:1stru}
The rest of the paper is organized as follows. In Section~\ref{sec:notation} we introduce basic notation.
In Section~\ref{sec:var} we obtain variational estimates for abstract families of operators satisfying several natural assumptions. We decompose the variation seminorm into suitably defined long and short variations; the first capturing  behavior of a sequence when it moves between dyadic blocks and the latter measuring the variations inside each block.
All arguments are adjusted to dyadic grids determined by the underlying nonisotropic dilations.
In Section~\ref{sec:rad} we prove  Theorem~\ref{thm:3}, which is the key result of the paper.
The proof runs by induction over the dimension $d$. For $d=1$ the result follows readily from the general theory
of uniform oscillation estimates recently developed in \cite{MSzW}.

\subsection{In Memoriam} It was very sad  when we learned that Jacek Zienkiewicz (01.01.1967 -- 09.01.2023) passed away. Jacek was a prominent member of the group of harmonic analysis in the Mathematical Institute at the University of Wrocław. He had an important impact on our careers as well as careers of many generations of young mathematicians in Wrocław. Jacek was always open-minded, loved mathematics endlessly, and transmitted to students and collaborators his enthusiasm for mathematics. It is an irreplaceable loss to the whole harmonic analysis community. We miss our friend dearly.

\section{Notation}\label{sec:notation}
In this section we set up our notation that will be used throughout the paper. 

\subsection{Basic notation} The sets $\ZZ$, $\RR$,
$\CC$, and $\TT \coloneqq   \RR/\ZZ$ have standard meaning. We let $\ZZ_+ \coloneqq   \{1, 2, \ldots\}$, $\NN \coloneqq   \{0,1,2,\ldots\}$, $2\NN \coloneqq   \{0,2,4,\ldots\}$, and
$\RR_+ \coloneqq   (0, \infty)$. We also consider the set of dyadic numbers $\DD \coloneqq   \{2^n: n\in\ZZ\}$ and, for every $N\in\ZZ_+$, the block $[N] \coloneqq  (0,N] \cap \ZZ = \{1, \ldots, N \}$. 

By  $\ind{E}$ we denote the indicator function of a set $E$. If $E$ is finite, then $|E|$ is the number of its elements, while $\EE_{m \in E}f(m)$ denotes the average value of a function $f \colon E\to \CC$ over $E\neq\emptyset$ taken with respect to counting measure on $E$.
For a statement $S$ we write $\ind{S}$ to denote its indicator. For instance, $\ind{x\in A} = \ind{A}(x)$.

For two nonnegative quantities $A, B$ we write $A \lesssim B$ if there
is an absolute constant $C \in \RR_+$ such that $A\le CB$. If we want to emphasize that $C$ depends on a parameter $\alpha$, then we write $A \lesssim_{\alpha} B$. 

\subsection{Multiparameter notation} For $s=(s_1,\ldots, s_k), u=(u_1,\ldots, u_k) \in \RR^k_+$ and $\lambda \in \RR$ we use the following  notation $s \otimes u \coloneqq   (s_1 u_1, \dots, s_k u_k)$ and $\lambda \odot s \coloneqq   (\lambda s_1, \dots, \lambda s_k)$.  
In particular, we have
\[
\calA_{s}^{\PPP, \TTT} f(x) = \int_{u \in (0,1)^k} f(T_1^{P_1(s \otimes u)}\cdots T_d^{P_d(s \otimes u)}x) \, \dif u
\qquad {\rm and} \qquad
\mathcal M^{\PPP}_s f(x) =  \int_{(0,1)^k} f(x - P(s \otimes u)) \, \dif u.
\]
We will also write $u^{-1}\coloneqq (u_1^{-1},\ldots, u_k^{-1})$ for any $ u=(u_1,\ldots, u_k) \in \RR^k_+$.
We shall abbreviate the $k$-tuples $(0,\dots,0)$ and $(1,\dots,1)$ to $\bm 0$ and $\bm 1$ respectively. For $n \in \ZZ^k$ we write ${\bf 2}^n \coloneqq  (2^{n_1}, \dots, 2^{n_k})$.

\subsection{Difference and shift operators} Let $\{e_j\colon j\in[k]\}$ be the standard basis in $\RR^k$. For $\{a_s\colon s\in\RR^k_+\}\subseteq \CC$ and $h\in\RR^k_+$ we define respectively the partial  difference and shift operators by setting
\begin{align*}
\Delta^{\{j\}}_ha_s\coloneqq a_{s+e_j\otimes h}-a_s, \qquad \text{ and } \qquad
{\rm T}^{\{j\}}_ha_s\coloneqq a_{s+e_j\otimes h}, \qquad  j\in[k].
\end{align*}
For a set $K=\{j_1,\ldots, j_m\}\subseteq[k]$ we will write
\begin{align*}
\Delta^{K}_ha_s\coloneqq \Delta^{\{j_1\}}_h\circ\cdots\circ \Delta^{\{j_m\}}_h  a_s,
\qquad \text{ and } \qquad {\rm T}^{K}_ha_s\coloneqq {\rm T}^{\{j_1\}}_h\circ\cdots\circ {\rm T}^{\{j_m\}}_h  a_s.
\end{align*}
We also let
\begin{align*}
\Delta^{\emptyset}_ha_s={\rm T}^{\emptyset}_ha_s=a_s.
\end{align*}

\subsection{Euclidean spaces} For any $x=(x_1,\ldots, x_d)$,
$\xi=(\xi_1, \ldots, \xi_d)\in\RR^d$, we use the following symbols
\begin{align*}
x\cdot\xi \coloneqq   \sum_{i \in [d]} x_i \xi_i, 
\qquad |x|_1 \coloneqq  \sum_{i \in [d]} |x_i|,  
\qquad |x|_{\infty} \coloneqq   \max_{i\in[d]}|x_i|,
\end{align*}
to denote respectively the standard inner product, the taxicab norm,  
and the maximum norm on $\RR^d$. 

\subsection{Function spaces}
In the paper all vector spaces are defined over $\CC$.
For a continuous linear map $T \colon B_1 \to B_2$ between two normed
vector spaces $B_1$ and $B_2$, its operator norm is denoted by
$\|T\|_{B_1 \to B_2}$.

Throughout the paper a triple $(X, \mathcal B(X), \mu)$ will denote a measure space with a~set $X$, a~$\sigma$-algebra $\mathcal B(X)$, and a~$\sigma$-finite measure
$\mu$. The space of all $\mu$-measurable functions $f \colon X\to\CC$ is denoted by $L^0(X)$. Given $p\in[1, \infty)$ we denote by $L^p(X)$ the space consisting of all functions $f \in L^0(X)$ such that the $p$-th power of the modulus of $f$ is integrable. For $p = \infty$ we write $L^{\infty}(X)$ for the space of all
essentially bounded functions in $L^0(X)$. According to this, $L^{p}(X)$ is a Banach space with its norm given by 
\[
\norm{f}_{L^p(X)} \coloneqq   
\begin{cases} \left(\int_{X}|f(x)|^p \, \dif \mu(x) \right)^{\frac{1}{p}} & {\rm if} \ p \in [1, \infty),\\
{\rm ess} \sup_{x\in X} |f(x)| & {\rm if} \ p = \infty.
\end{cases}
\]
We extend this notion to functions taking values in a separable normed vector space $B$. For instance,
\begin{align*}
L^{p}(X;B)
 \coloneqq   \big\{F\in L^0(X;B):\|F\|_{L^{p}(X;B)} \coloneqq   \big\| \|F\|_B \big\|_{L^{p}(X)}<\infty\big\},
\end{align*}
where $L^0(X;B)$ is the space of all measurable functions from $X$ to
$B$ (up to the almost everywhere equivalence). 
We also work with the so-called weak-$L^p$ spaces defined similarly with the aid of the quasinorms
\[
\norm{f}_{L^{p,\infty}(X)} \coloneqq   
\begin{cases} \sup_{\lambda \in \RR_+}\lambda\mu(\{x\in X:|f(x)|>\lambda\})^{\frac{1}{p}} & {\rm if} \ p \in [1, \infty),\\
{\rm ess} \sup_{x\in X} |f(x)| & {\rm if} \ p = \infty.
\end{cases}
\]
In our case $X$ is usually $\RR^d$ equipped with Lebesgue measure or a countable set $X$ equipped with counting measure. We then write $L^p(\RR^d)$ or $\ell^p(X)$ instead of $L^p(X)$ and so on.

\subsection{Fourier transform}  
We use a convenient notation $\bm e(z)\coloneqq e^{2\pi {\bm i} z}$ for every $z\in\CC$, where ${\bm i}^2=-1$. Let $\calF_{\RR^d}$ denote the Fourier transform on $\RR^d$ defined for
any $f \in L^1(\RR^d)$ and for any $\xi\in\RR^d$ as
\begin{align*}
\calF_{\RR^d} f(\xi) \coloneqq   \int_{\RR^d} f(x) \bm e(x\cdot\xi) \, \dif x.
\end{align*}
More generally, for a finite measure $\nu$ on $\RR^d$ its Fourier transform is given by
\[
\calF_{\RR^d} \nu(\xi) \coloneqq   \int_{\RR^d} \bm e(x \cdot \xi) \, \dif \nu(x).
\]

The Fourier multiplier operator associated with a bounded measurable function $\mathfrak m \colon \RR^d \to\CC$ is given by 
\begin{align*} 
T_{\RR^d}[\mathfrak m]f(x) \coloneqq   \int_{\RR^d} \mathfrak m(\xi) \calF_{\RR^d}f(\xi) \bm e(-\xi\cdot x)  \, \dif\xi.
\end{align*} 
In particular, for $s \in \RR_+^k$ and a polynomial mapping $\PPP \colon \RR_+^k \to \RR^d_+$ corresponding Radon operator $\calM_s^\PPP$ can be expressed as a convolution with a probability measure $\nu_{s}^\PPP$ which satisfies
\begin{equation*}
\calF_{\RR^d} \nu_{s}^\PPP(\xi)=\int_{(0,1)^k} \bm e \big(P(s \otimes u)\cdot \xi\big) \, \dif u, \qquad \xi\in\RR^d.
\end{equation*}
We thus see that
$
\calM_s^\PPP = T_{\RR^d}[\mathfrak m_s^\PPP],
$
where $\mathfrak m_s^\PPP \coloneqq  \calF_{\RR^d} \nu_{s}^\PPP$.

\subsection{Coordinatewise order} For any $x=(x_1,\ldots, x_k)\in\RR^k$ and $y=(y_1,\ldots, y_k)\in\RR^k$ we say $x\preceq y$ if and only if $x_i\le y_i$ for each $i\in[k]$. Similarly, we write $x\prec y$ if and only if $x_i< y_i$ for each $i\in[k]$. 

If $\II \subseteq \RR^k_+$ is a set of indices, then we define
\begin{align*}
\mathfrak S (\II) \coloneqq  \Set[\big]{(I_j:j\in\NN) \subset \II : I_{0}\prec 
I_{1} \prec \cdots }
\end{align*}
which is a family of all strictly increasing sequences (with respect to the coordinatewise order) taking their values in $\II$. We also consider its truncation defined for each $J \in \ZZ_+$ by
\begin{align*}
\mathfrak S_J(\II) \coloneqq  \Set[\big]{(I_j:j\in \{0\} \cup [J])\subset \II : I_{0}\prec I_{1}\prec \cdots \prec I_{J} }.
\end{align*}

\subsection{Oscillation and variation seminorms} For $x \in X$ let $(a_{s}(x): s\in\RR^k)\subseteq\CC$ be a family of numbers. Given $\rho \in [1, \infty)$, $\II, \JJ\subset \RR^k$, and $I=(I_j : j\in\NN) \in \mathfrak S(\II)$ we define the $k$-parameter
$\rho$-oscillation seminorm
\begin{align*}
O_{I}^\rho(a_{s}(x): s \in \JJ) \coloneqq  
\Big(\sum_{j \in \ZZ_+}\sup_{s\in \BB_j[I] \cap \JJ}
\abs[\big]{a_{s}(x) - a_{I_{j-1}}(x)}^\rho\Big)^{\frac{1}{\rho}},
\end{align*}
where
$\BB_j[I] \coloneqq   [I_{(j-1),1}, I_{j,1})\times\dots\times[I_{(j-1),k}, I_{j,k})$
are boxes determined by the elements $I_j=(I_{j,1}, \ldots, I_{j,k})$ of
the sequence $I$. We also consider its truncation defined for each $J \in \ZZ_+$ and $I \in \mathfrak S_J (\II)$ by
\begin{align*}
O_{I,J}^\rho(a_{s}(x): s \in \JJ) \coloneqq  
\Big(\sum_{j \in [J]} \sup_{s\in \BB_j[I] \cap \JJ}
\abs[\big]{a_{s}(x) - a_{I_{j-1}}(x)}^\rho\Big)^{\frac{1}{\rho}}.
\end{align*}
Similarly, we introduce a $k$-parameter variant of $\rho$-variations by
\begin{align*}
V^{\rho}(a_s(x): s\in \II) \coloneqq  
\sup_{J \in \ZZ_+} \sup_{I \in  \mathfrak S_J(\II)}
\Big(\sum_{j \in [J]}  \big|a_{I_{j}}(x)-a_{I_{j-1}}(x) \big|^{\rho} \Big)^{\frac{1}{\rho}}.
\end{align*}
If needed, we use the convention that the supremum taken over the empty set is equal to $0$.

We refer to \cite{MSzW} for useful properties of oscillation and variation seminorms.

\section{Multiparameter variation inequality for abstract operators}\label{sec:var}

\subsection{Abstract variational theorem} Throughout this section  $(X, \mathcal B(X), \mu)$ is a $\sigma$-finite measure space and $k \in \ZZ_+$ is fixed. Suppose that $\{ \calH_s : s \in \RR^k_+\}$ is a family of operators acting on $L^p(X)$. Our goal is to obtain the following multiparameter variation inequality
\[
\big\| 
V^2\big(\calH_s f : s\in\RR^k_+\big)
\big\|_{L^p(X)}\lesssim_p \| f\|_{L^p(X)},\qquad f\in L^p(X), 
\]
for certain values of $p \in (1, \infty)$, under some general conditions imposed on $\calH_s$. 

Here a model example is $X=\RR^d$ with Lebesgue measure, and a signed measure $\sigma$ satisfying $\sigma(\RR^d) = 0$ together with some additional smoothness and scaling properties. In this context, for each $s \in \RR^k_+$ we set $\calH_s f = \sigma_s \ast f$, where $\sigma_s$ is the $L^1$ dilation of $\sigma$ with the scale parameter $(P_1(s), \dots, P_d(s))$, while each $P_i$ is a monomial of $k$ variables, that is, $P_i(s) = {s_1}^{\alpha_{i,1}} \cdots {s_k}^{\alpha_{i,k}}$ for some $\alpha_{i,1}, \dots, \alpha_{i,k} \in \NN$.

\begin{theorem}[Abstract oscillation/variation theorem]\label{thm:2}
Let $(X, \mathcal B(X), \mu)$ be a $\sigma$-finite measure space and take $1 \leq p_0 < p_1  \leq 2$. Given $k \in \ZZ_+$, let $\{\calH_s : s\in\RR_+^k\}$ be a~family of linear operators defined on $L^{p_0}(X) + L^{2}(X)$. Suppose that the following conditions are satisfied. 

\begin{enumerate}[label={\rm (C\arabic*)}]
	\item\label{cond:abs1}(Maximal estimate) The maximal operator 
	$\calH_{*}f =  \sup_{s \in \RR_+^k} \sup_{\abs{g} \leq \abs{f}} \abs{ \calH_s g}
	$
	is bounded on $L^{p_1}(X)$. \smallskip
	
	
	\item\label{cond:abs3} (Splitting into long and short variations) There exist parameters $L, r \in \ZZ_+$ and a family of cubes $ \{ Q_{\bf g} \subseteq \RR^k_+ : {\bf g} \in \ZZ^r \}$ for which the following is true. For each ${\bf g} \in \ZZ^r$ the cube $Q_{\bf g}$ is of the form 
	\[
	Q_{\bf g} = \big\{ s \in \RR^k_+ : {\rm left}(Q_{\bf g}) \preceq s \preceq {\rm right}(Q_{\bf g}) \big\}
	\]
	with some ${\rm left}(Q_{\bf g}), {\rm right}(Q_{\bf g}) \in \RR^k_+$ satisfying ${\rm right}(Q_{\bf g}) = (1+2^L) \odot {\rm left}(Q_{\bf g})$.
	Moreover, there are fixed parameters $s({\bf g}) \in  Q_{\bf g}$, ${\bf g} \in \ZZ^r$, for which the inequality
	\begin{equation*}
\qquad  	\big\| 
	V^2\big(\calH_s f : s\in\RR^k_+\big)
	\big\|_{L^{p_1}(X)} \lesssim \big\| V^2\big( \calH_{s({\bf g})} f : {\bf g} \in \ZZ^r \big)
	\big\|_{L^{p_1}(X)} + \Big\| \Big(
	\sum_{{\bf g} \in \ZZ^r} \big| V^2\big(\calH_{s} f : s\in Q_{\bf g} \big) \big|^2 \Big)^{\frac{1}{2}}
	\Big\|_{L^{p_1}(X)}
	\end{equation*}
	holds uniformly in $f \in L^{p_1}(X)$. \smallskip
	
	\item\label{cond:abs4} (Continuity)  Given $f \in L^{p_1}(X)$ and ${\bf g} \in \ZZ^r$ the mapping $s \mapsto \calH_s f (x)$ is continuous on $Q_{\bf g}$ for $\mu$-almost every $x \in X$. \smallskip
	
	\item\label{cond:abs5} (Littlewood--Paley decomposition) There exists a family of operators $ \{ \calS_{\bf g} : {\bf g} \in \ZZ^r \}$ commuting with each $\calH_s$, which plays the role of the family of Littlewood--Paley operators in the sense that 
	\begin{align}\label{eq:PL1}
	\sum_{{\bf g} \in \ZZ^r} \calS_{\bf g}= \operatorname{Id}
	\end{align}
	holds in the strong operator topology on $L^2(X)$ and $\mu$-almost everywhere on $X$ for every $f\in L^2(X)$.   Furthermore, assume that for every $p \in (p_0, 2]$ we have
	\begin{align}\label{eq:PL2}
	\norm[\Big]{\Big(\sum_{{\bf g} \in \ZZ^r} \abs{\calS_{\bf g} f}^{2} \Big)^{\frac{1}{2}}}_{L^p(X)}
	\lesssim_p
	\norm{f}_{L^p(X)}, \qquad f\in L^p(X).
	\end{align}
	Moreover, the operators $\calS_{\bf g}$ are adapted to the cubes $Q_{\bf g}$, that is, for each ${\bf g}, {\bf h} \in \ZZ^r$ and $s_{\bf g} \in Q_{\bf g}$
	\begin{align}\label{eq:off-diag}
	\Big\| \sum_{{\bf g} \in \ZZ^r} \calH_{s_{\bf g}} \calS_{{\bf g}+{\bf h}} f \Big\|_{L^{2}(X)} \leq c_{\bf h} \| f \|_{L^{2}(X)}
	\end{align}
	holds for some family $\{ c_{\bf h} : {\bf h} \in \ZZ^r \}\subseteq\RR_+$ and, additionally, for some $\varepsilon \in (0, \frac{1}{2})$ we have  
	\begin{align}\label{eq:sum-aj}
	\sum_{ {\bf h} \in \ZZ^r } c_{\bf h}^{(\frac{1}{2} - \varepsilon) \frac{p_1 - p_0}{2 - p_0}} < \infty. 
	\end{align}
	\item\label{cond:abs6} (Cancellation condition) For any $K\subseteq [k]$  (we allow $K=\emptyset$) we have a uniform estimate
	\begin{align*}
	\big\| \Delta_h^{K}\calH_{s} \big\|_{L^{p}(X) \rightarrow L^{p}(X)}\lesssim \prod\limits_{i \in K}\frac{h_i}{s_i} 
	\end{align*}
        for $p \in \{p_0,2\}$, whenever $s, s+h\in Q_{\bf g}$ and ${\bf g}\in\ZZ^r$.
\end{enumerate}	
Then the following $k$-parameter variation inequality holds
\begin{align}
\label{eq:3}
\big\| 
V^2\big(\calH_{s} f : s\in\RR^k_+\big)
\big\|_{L^{p_1}(X)} \lesssim \| f\|_{L^{p_1}(X)},\qquad f\in L^{p_1}(X), 
\end{align}
with the implicit constant possibly depending on $p_0, p_1, k, r, L, \varepsilon$, and the implicit constants arising in the assumptions.
In particular, we have the $k$-parameter oscillation inequality
\begin{align}
\label{eq:4}
\sup_{I \in \mathfrak S(\RR^k_+)}	\big\| O^2_I\big(\calH_{s} f : s\in\RR^k_+\big)
\big\|_{L^{p_1}(X)} \lesssim \| f\|_{L^{p_1}(X)},\qquad f\in L^{p_1}(X).
\end{align}
Additionally, the inequalities stated above hold with $p_1$ replaced by any $p \in [p_1, p_1']$ provided that the set of dual conditions is also satisfied, that is, \ref{cond:abs3} and \ref{cond:abs4} hold with $p_1$ replaced by every $p \in [p_1,p'_1]$, \ref{cond:abs1}
and \ref{cond:abs6} hold with the adjoint operators $\calH^*_s$ in place of $\calH_s$, and \eqref{eq:PL2} holds in the range $[2, p_0')$. 
\end{theorem}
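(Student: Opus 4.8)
\textbf{Proof plan for Theorem \ref{thm:2}.}

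The plan is to reduce the $k$-parameter variation estimate \eqref{eq:3} to a combination of the long-variation piece (the variation along the $r$-dimensional lattice of sample points $s(\mathbf{g})$) and the short-variation piece (the square function of variations over the individual cubes $Q_{\mathbf g}$), exactly as afforded by the splitting condition \ref{cond:abs3}. For the \emph{short} part, inside each fixed cube $Q_{\mathbf g}$ the scales $s$ and $s+h$ stay comparable, so the cancellation estimate \ref{cond:abs6} provides $\ell^2$-type control on successive differences; combined with the continuity \ref{cond:abs4} and a $k$-parameter Rademacher--Menshov argument in the spirit of \eqref{eq:num}, one dominates $V^2(\calH_s f : s \in Q_{\mathbf g})$ by a countable family of dyadic-block square functions on $Q_{\mathbf g}$, and the maximal estimate \ref{cond:abs1} together with a bootstrap--interpolation between $L^{p_0}$ and $L^2$ (à la \cite{bootstrap}) sums these up in $L^{p_1}$ after taking $\ell^2$ over $\mathbf g \in \ZZ^r$. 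For the \emph{long} part, one uses the Littlewood--Paley resolution \eqref{eq:PL1}: write $\calH_{s(\mathbf g)} f = \sum_{\mathbf h \in \ZZ^r} \calH_{s(\mathbf g)} \calS_{\mathbf g + \mathbf h} f$, estimate the contribution of each shift $\mathbf h$ separately, and recombine.

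The key analytic input for the long variations is the following. Fix $\mathbf h \in \ZZ^r$ and consider the operators $\mathbf g \mapsto \calH_{s(\mathbf g)} \calS_{\mathbf g + \mathbf h}$. On $L^2(X)$, the off-diagonal bound \eqref{eq:off-diag} gives that the associated martingale-type difference operators have norm controlled by $c_{\mathbf h}$, so a standard square-function argument bounds the $2$-variation of $(\calH_{s(\mathbf g)} \calS_{\mathbf g + \mathbf h} f)_{\mathbf g}$ in $L^2(X)$ by $\lesssim c_{\mathbf h}^{1/2} \|f\|_{L^2(X)}$ (the square-root gain is the usual one coming from interpolating the trivial $\ell^\infty$ bound with the $\ell^2$/Bessel bound). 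On $L^{p_0}(X)$, using \ref{cond:abs6} again (now with exponent $p_0$) together with the Littlewood--Paley square-function bound \eqref{eq:PL2} and the Fefferman--Stein-type maximal ingredient hidden in \ref{cond:abs1}, one gets a bound of the form $\lesssim_{p_0} \|f\|_{L^{p_0}(X)}$ with no decay in $\mathbf h$ (or at worst polynomial growth absorbed into the interpolation exponent). Interpolating these two estimates yields an $L^{p_1}$ bound with gain $c_{\mathbf h}^{(\frac12 - \varepsilon)\frac{p_1 - p_0}{2 - p_0}}$ for the $\mathbf h$-th piece — precisely the exponent appearing in \eqref{eq:sum-aj} — and summing over $\mathbf h \in \ZZ^r$ converges by hypothesis. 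Adding the long and short contributions gives \eqref{eq:3}, and \eqref{eq:4} follows since the oscillation seminorm is pointwise dominated by the $2$-variation (using that $O^2_I \le V^2$ after the standard comparison, see \cite{MSzW}). The dual range $[p_1, p_1']$ is obtained by running the same argument with the adjoint hypotheses, since variation and oscillation seminorms behave well under duality.

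The main obstacle I expect is the \emph{short-variation} estimate on the cubes $Q_{\mathbf g}$, more precisely the passage from the single-cube bound to the global $\ell^2(\ZZ^r)$-summed bound in $L^{p_1}$. Within one cube the scaling is frozen, so \ref{cond:abs6} only yields a fixed finite-dimensional ($L$ levels) Rademacher--Menshov iteration, producing a logarithmic-in-$2^L$ loss that is harmless since $L$ is a fixed parameter; the real difficulty is that after summing over $\mathbf g$ the square function involves overlapping frequency projections coming from the different cubes, and one cannot simply invoke \eqref{eq:PL2} directly. The remedy is the delicate bootstrap--interpolation scheme of \cite{bootstrap}: one first proves a weak-type or restricted-type endpoint bound near $p_0$ using only the $L^{p_0}$-cancellation \ref{cond:abs6}, then upgrades to strong $L^{p_1}$ bounds by interpolating against the $L^2$-orthogonality coming from \eqref{eq:off-diag} and \eqref{eq:PL1}, carefully tracking the numerology so that the exponent in \eqref{eq:sum-aj} is what controls summability. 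Making this interpolation uniform in $\mathbf g$ and compatible with the $r$-dimensional (rather than $k$-dimensional) dyadic structure — this is where the \emph{parameters-gluing} phenomenon enters — is the technical heart of the argument.
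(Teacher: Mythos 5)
Your architecture coincides with the paper's: split via \ref{cond:abs3}; handle the long part by the Littlewood--Paley resolution \eqref{eq:PL1}, the $L^2$ off-diagonal bound \eqref{eq:off-diag}, and a Duoandikoetxea--Rubio de Francia type vector-valued interpolation against the uniform $L^{p_0}$ bounds from \ref{cond:abs6} and the maximal bound \ref{cond:abs1}, summing over $\mathbf h$ via \eqref{eq:sum-aj}; handle the short part by a multiparameter Rademacher--Menshov reduction to dyadic square functions and the same interpolation lemma. Two points of bookkeeping differ: the paper dominates the long variations by the pointwise larger square function $\big(\sum_{\mathbf g}|\calH_{s(\mathbf g)}f|^2\big)^{1/2}$, so no variational $L^2$ theory and no $c_{\mathbf h}^{1/2}$ gain is needed there (the plain gain $c_{\mathbf h}^{\frac{p_1-p_0}{2-p_0}}$ suffices); and the factor $\tfrac12-\varepsilon$ in \eqref{eq:sum-aj} is actually consumed by the short variations, where Rademacher--Menshov produces a loss $2^{|l|_1/2}$ that must be beaten by the decay $2^{-|l|_1}$ coming from \ref{cond:abs6}.

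There is, however, one genuine gap in the short-variation step. The multiparameter Rademacher--Menshov inequality you invoke is only valid under a vanishing hypothesis: in \eqref{eq:num} one needs $a_s=0$ whenever some coordinate of $s$ lies on the left face of the cube. Without it, decomposing $a_{s'}-a_{s}$ into full mixed differences $\Delta^{[k]}_h$ over dyadic rectangles leaves uncontrolled boundary contributions such as $a_{1,s_2'}-a_{1,s_2}+a_{s_1',1}-a_{s_1,1}$ (already for $k=2$); these are lower-parameter variations living on the faces, they are not of the form $\Delta^{[k]}_h a$, and condition \ref{cond:abs6} applied with the full set $K=[k]$ does not see them. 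The paper repairs this by an inclusion--exclusion with the projective operators $\Pi^{\mathbf g,K}_s=\calH_{\pi^{\mathbf g,K}(s)}$ (freezing the coordinates in $K$ at ${\rm left}(Q_{\mathbf g})$), writing $\calH_s=\sum_{K\subseteq[k]}\calH^{\mathbf g,K}_s$ so that each piece vanishes on the relevant faces and only mixed differences in the non-frozen coordinates occur, each controlled by \ref{cond:abs6} with the corresponding $K$. One could instead induct on the number of parameters to absorb the face terms, but some such device must be supplied; as written, a single application of a ``$k$-parameter Rademacher--Menshov argument'' to $\calH_sf$ on $Q_{\mathbf g}$ does not go through.
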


Some remarks about Theorem~\ref{thm:2} are in order. 
\begin{enumerate}[label*={\arabic*}.]

\item The quantities arising in the inequality in  \ref{cond:abs3} are called respectively the long and short variations.

\item At the first glance it may be surprising that we estimate
$2$-variations \eqref{eq:3}, which, in view of L{\'e}pingle's
inequality, in many problems may be simply unbounded. But in Theorem
\ref{thm:2} we work with the family $\{\calH_s : s\in\RR_+^k\}$
obeying a very strong square function estimate \eqref{eq:off-diag},  which will allow us to handle long
$2$-variations on $L^2(X)$ and $L^{p_1}(X)$ and deduce the desired bounds.

\item In order to avoid some technicalities we have assumed that \eqref{eq:PL1} holds in the strong operator topology on $L^2(X)$ and $\mu$-almost everywhere on $X$ for every $f\in L^2(X)$. In fact, the latter assumption may be dropped and it only suffices to assume that  \eqref{eq:PL1} holds in the strong operator topology on $L^2(X)$ to obtain the conclusion of Theorem \ref{thm:2}. In practice, the usual Littlewood--Paley operators will be used for which \eqref{eq:PL1} holds in the strong operator topology on $L^2(X)$ and $\mu$-almost everywhere on $X$ for every $f\in L^2(X)$.

\item The dual conditions hold when $\calS_{\bf g}$ are self-adjoint and $\calH_s$ are convolution operators on a locally compact abelian group equipped with Haar measure. The model operators fit into this paradigm. 
\item For the sake of clarity we specified $Q_{\bf g}$ to be of a very particular simple form. However, many other variants are also available. For example, one can use ${\rm right}(Q_{\bf g}) = C_{\bf g} \odot {\rm left}(Q_{\bf g})$ with $C_{\bf g} \in (1,\infty)$, provided that the splitting in \ref{cond:abs3} holds and $\sup_{{\bf g} \in \ZZ^r} C_{\bf g} < \infty$. 
\item In Section~\ref{sec:rad} we always apply Theorem~\ref{thm:2} with $p_0=1$ although other choices are also possible. 
\end{enumerate}

The rest of this section will be devoted to proving
Theorem~\ref{thm:2}. It suffices to establish \eqref{eq:3} as
$2$-variations always dominate oscillations from \eqref{eq:4}. To
prove \eqref{eq:3} we will use \ref{cond:abs3} to estimate long and
short variations separately.  Since the space $X$ is fixed, we
shall abbreviate $L^p(X)$ to $L^p$.

\subsection{Littlewood--Paley operators} Condition \ref{cond:abs3}
allows one to divide the problem into two parts, global and local. In
both cases we gain enough control over the size of the scale
parameter~$s \in \RR^k_+$~to use efficiently properties of the
operators $\calS_{\bf g}$ from \ref{cond:abs5}.  Our first goal is to
use the off-diagonal decay \eqref{eq:off-diag} on $L^2(X)$ to
obtain a similar estimate on $L^{p_1}(X)$.  This will be done in
Lemma~\ref{lem:duo+rubio:squarem} below, but first we recall the
following result that derives a vector-valued inequality from a
maximal one.

\begin{lemma}[{cf.\ \cite[p.\ 544]{duo&rubio}}]
	\label{lem:duo+rubiom}
	Suppose that $(\calL_{v})_{v\in\VV}$ is a sequence of linear operators on $L^{1}(X)+L^{\infty}(X)$ indexed by a countable set $\VV$, and define
	$
	\calL_{*, \VV}f \coloneqq   \sup_{v\in\VV} \sup_{\abs{g} \leq \abs{f}} \abs{\calL_{v}g}.
	$
	Fix $q_{0},q_{1} \in [1,\infty]$ with $q_0 < \min\{2, q_1\}$ and take $\theta \in (0,1)$ such that $\frac12 = \frac{1-\theta}{q_{0}}$. Then for $q_{\theta}\in(q_0, q_1)$ given by
	$\frac{1}{q_{\theta}} = \frac{1-\theta}{q_{0}} + \frac{\theta}{q_{1}}$ we have
	\[
	\norm[\Big]{ \Big( \sum_{v\in\VV} \abs{\calL_{v}g_{v}}^{2}\Big)^{\frac{1}{2}}}_{L^{q_{\theta}}}
	\leq
	\big(\sup_{v\in\VV} \norm{\calL_{v}}_{L^{q_{0}}\to L^{q_{0}}}^{1-\theta}\big) \norm{\calL_{*, \VV}}_{L^{q_{1}}\to L^{q_{1}}}^{\theta}
	\norm[\Big]{ \Big( \sum_{v\in\VV} \abs{g_{v}}^{2}\Big)^{\frac{1}{2}}}_{L^{q_{\theta}}}.
	\]
\end{lemma}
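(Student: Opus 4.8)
The statement to prove is Lemma~\ref{lem:duo+rubiom}, which is the standard Duoandikoetxea--Rubio de Francia trick for upgrading a maximal inequality to a vector-valued (square-function) inequality. Let me sketch the proof.

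\medskip

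\textbf{Plan.} The proof is a classical interpolation argument with change of measure, due to Duoandikoetxea and Rubio de Francia (as attributed). The idea is to view the square-function estimate as the boundedness of a single \emph{linear} operator acting between vector-valued $L^q$ spaces, namely from $L^q(X; \ell^2(\VV))$ to $L^q(X)$ (or a suitable scalar space), and then interpolate two endpoint bounds: a trivial $\ell^2$-valued bound at the exponent $q_0$ (coming merely from the uniform $L^{q_0} \to L^{q_0}$ operator norms together with the fact that $q_0 \le 2$), and a maximal-function bound at the exponent $q_1$.

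\medskip

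First I would record the two endpoints. At exponent $q_0$: since $q_0 \le 2$, Minkowski's integral inequality in the form $\|(\sum_v |\calL_v g_v|^2)^{1/2}\|_{L^{q_0}} = \| (\sum_v |\calL_v g_v|^2)^{1/2} \|_{L^{q_0}}$ combined with $\ell^{q_0} \hookrightarrow \ell^{2}$ pointwise-in-$v$ and the scalar bounds $\|\calL_v g_v\|_{L^{q_0}} \le \|\calL_v\|_{L^{q_0}\to L^{q_0}} \|g_v\|_{L^{q_0}}$ gives
\[
\Big\| \Big( \sum_{v\in\VV} |\calL_v g_v|^2 \Big)^{1/2} \Big\|_{L^{q_0}} \le \big( \sup_v \|\calL_v\|_{L^{q_0}\to L^{q_0}} \big) \Big\| \Big( \sum_{v\in\VV} |g_v|^2 \Big)^{1/2} \Big\|_{L^{q_0}}.
\]
At exponent $q_1$: pointwise one has $|\calL_v g_v| \le \calL_{*,\VV}\big( \sup_{w} |g_w| \big)$... no — more carefully, $|\calL_v g_v(x)| \le \calL_{*,\VV} g_v (x)$ for each fixed $v$ is false since $\calL_{*,\VV}$ takes one input; instead one uses $\big(\sum_v |\calL_v g_v|^2\big)^{1/2} \le \big(\sum_v |\calL_{*,\VV} g_v|^2\big)^{1/2}$ where $\calL_{*,\VV} g_v$ means the maximal operator applied to the single function $g_v$ — but that still needs $\sup_{|h|\le |g_v|}$ which is exactly built into $\calL_{*,\VV}$, so indeed $|\calL_v g_v| \le \calL_{*,\VV} g_v$ pointwise. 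Hence by Fefferman--Stein-type reasoning, or rather directly: since $q_1 \ge 2$ we need the vector-valued extension of the maximal bound. The clean way is to invoke the well-known principle (again Duoandikoetxea--Rubio de Francia) that a sublinear operator $S$ bounded on $L^{q_1}$ automatically satisfies $\|(\sum_v |S g_v|^2)^{1/2}\|_{L^{q_1}} \lesssim \|(\sum_v |g_v|^2)^{1/2}\|_{L^{q_1}}$ when $q_1 \ge 2$, applied to $S = \calL_{*,\VV}$; but to keep the constant sharp (equal to $\|\calL_{*,\VV}\|_{L^{q_1}\to L^{q_1}}$) one argues by duality: for $q_1 \ge 2$, $\ell^2 \hookrightarrow \ell^{q_1}$ and $\|(\sum_v |h_v|^{q_1})^{1/q_1}\|_{L^{q_1}} = (\sum_v \|h_v\|_{L^{q_1}}^{q_1})^{1/q_1}$, reducing to the scalar bounds $\|\calL_v g_v\|_{L^{q_1}} \le \|\calL_{*,\VV} g_v\|_{L^{q_1}} \le \|\calL_{*,\VV}\|_{L^{q_1}\to L^{q_1}} \|g_v\|_{L^{q_1}}$ — wait, this requires $\|g_v\|_{L^{q_1}}$ summed in $\ell^{q_1}$ to control $\|(\sum |g_v|^2)^{1/2}\|_{L^{q_1}}$, which holds since $\ell^{q_1} \supseteq \ell^2$ pointwise when $q_1 \ge 2$. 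So the $q_1$ endpoint holds with constant $\|\calL_{*,\VV}\|_{L^{q_1}\to L^{q_1}}$.

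\medskip

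Then I would interpolate. The operator $T : (g_v)_v \mapsto (\calL_v g_v)_v$ maps $L^{q_0}(X;\ell^2) \to L^{q_0}(X;\ell^2)$ with norm $\sup_v \|\calL_v\|_{L^{q_0}\to L^{q_0}}$ and $L^{q_1}(X;\ell^2) \to L^{q_1}(X;\ell^2)$ with norm $\|\calL_{*,\VV}\|_{L^{q_1}\to L^{q_1}}$. By the vector-valued Riesz--Thoren (or Stein--Weiss) interpolation theorem for Banach-space-valued $L^p$ spaces — more precisely, complex interpolation of $L^{q}(X;\ell^2)$ spaces, $[L^{q_0}(X;\ell^2), L^{q_1}(X;\ell^2)]_\theta = L^{q_\theta}(X;\ell^2)$ — the operator $T$ is bounded on $L^{q_\theta}(X;\ell^2)$ with norm at most $\big(\sup_v \|\calL_v\|_{L^{q_0}\to L^{q_0}}\big)^{1-\theta} \|\calL_{*,\VV}\|_{L^{q_1}\to L^{q_1}}^{\theta}$, which is exactly the claimed inequality; here $\theta$ and $q_\theta$ are defined by the stated relations, and the constraint $\frac12 = \frac{1-\theta}{q_0}$ is precisely what forces the target space at the $q_0$-endpoint to be $\ell^2$-valued rather than $\ell^{q_0}$-valued, so that interpolation lands on $\ell^2$. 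One minor technical point: since $\VV$ may be infinite, one first proves the inequality for finitely supported $(g_v)$ and passes to the limit by monotone convergence; another is that $\calL_v$ being defined only on $L^{q_0}+L^{q_1}$ is harmless because all $g_v$ in question lie in this class.

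\medskip

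\textbf{Main obstacle.} The only genuine subtlety is the $q_1$-endpoint: one must ensure the vector-valued maximal-type bound at $q_1 \ge 2$ holds with constant exactly $\|\calL_{*,\VV}\|_{L^{q_1}\to L^{q_1}}$ (not merely $\lesssim$), since otherwise the interpolated constant degrades. As sketched above this follows from the elementary embedding $\ell^2 \hookrightarrow \ell^{q_1}$ for $q_1 \ge 2$ together with the scalar domination $|\calL_v g_v| \le \calL_{*,\VV} g_v$; no Fefferman--Stein inequality or extra hypothesis is needed. Everything else is bookkeeping with complex interpolation of vector-valued $L^p$ spaces. I would present the argument in this order: (i) reduce to finite $\VV$; (ii) establish the $q_0$-endpoint via Minkowski and $\ell^{q_0}\hookrightarrow\ell^2$; (iii) establish the $q_1$-endpoint via $\ell^2\hookrightarrow\ell^{q_1}$ and scalar maximal bounds; (iv) interpolate the linear operator $T$ between $L^{q_0}(X;\ell^2)$ and $L^{q_1}(X;\ell^2)$; (v) let $\VV$ grow.
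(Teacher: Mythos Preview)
Your overall strategy—interpolate the diagonal linear operator $T:(g_v)_v\mapsto(\calL_v g_v)_v$ between two vector-valued endpoints—is correct, but the endpoints you chose are wrong, and this is a genuine gap rather than a cosmetic one.

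The $q_0$-endpoint $T:L^{q_0}(X;\ell^2)\to L^{q_0}(X;\ell^2)$ with norm $\sup_v\|\calL_v\|_{L^{q_0}\to L^{q_0}}$ is \emph{false} in general when $q_0<2$. Your argument ``$\ell^{q_0}\hookrightarrow\ell^2$'' lets you pass from $\ell^2$ to $\ell^{q_0}$ on the \emph{output} side, but to close the loop you would then need $\big(\sum_v|g_v|^{q_0}\big)^{1/q_0}\le\big(\sum_v|g_v|^2\big)^{1/2}$ on the input, which goes the wrong way. Concretely, take $X=\RR$, $\calL_v f=f(\cdot-v)$ for $v\in[N]$, and $g_v=\ind{[0,1]}$: the left side is $N^{1/q_0}$ while the right side is $N^{1/2}$, so the claimed $\ell^2$-valued bound blows up for $q_0<2$. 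Your $q_1$-endpoint is also problematic: nothing in the hypotheses forces $q_1\ge 2$, and even when $q_1\ge 2$ the chain you sketch does not close (the embedding $\ell^2\hookrightarrow\ell^{q_1}$ helps on the input but hurts on the output).

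The fix, which is exactly the argument in \cite{duo&rubio} and \cite[Lemma~2.8]{bootstrap} (to which the paper defers), is to interpolate between \emph{different} sequence spaces at the two endpoints: $T:L^{q_0}(X;\ell^{q_0})\to L^{q_0}(X;\ell^{q_0})$ with norm $\sup_v\|\calL_v\|_{L^{q_0}\to L^{q_0}}$ (trivial by Fubini), and $T:L^{q_1}(X;\ell^\infty)\to L^{q_1}(X;\ell^\infty)$ with norm $\|\calL_{*,\VV}\|_{L^{q_1}\to L^{q_1}}$ (since $\sup_v|\calL_v g_v|\le\calL_{*,\VV}(\sup_w|g_w|)$ pointwise). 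Complex interpolation gives $[\ell^{q_0},\ell^\infty]_\theta=\ell^r$ with $\tfrac1r=\tfrac{1-\theta}{q_0}$; the hypothesis $\tfrac12=\tfrac{1-\theta}{q_0}$ is precisely what makes $r=2$. You had this relation backwards: the constraint on $\theta$ is not there to ``force the target space at the $q_0$-endpoint to be $\ell^2$-valued'', it is there so that interpolating $\ell^{q_0}$ against $\ell^\infty$ lands on $\ell^2$. With the correct endpoints the proof is immediate.
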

\begin{proof}
The proof  runs along  the same lines as the proof of \cite[Lemma~2.8]{bootstrap}, we omit the details. 
\end{proof}

Next we use Lemma~\ref{lem:duo+rubiom} to obtain the following off-diagonal decay on $L^{p_1}(X)$.
 
\begin{lemma}
	\label{lem:duo+rubio:squarem}
	Let $p_{0}, p_{1}, r, \calS_{\bf g}$ be as in Theorem~\ref{thm:2} and for fixed $k_0\in\ZZ_+$, $\mathcal V \in\ZZ_+^{k_0}$ denote 
	\[
	\ZZ^r_{\mathcal V} \coloneqq  \Set[\big]{({\bf g}, v)\in\ \ZZ^r \times \ZZ_+^{k_0} : v \preceq {\mathcal V}}.
	\] 
	Suppose that $(\calL_{{\bf g},v})_{({\bf g}, v)\in\ZZ^r_{\mathcal V}}$ is a sequence of operators satisfying
	\begin{equation}
	\label{eq:duo+rubio:off-diagm}
	\norm[\Big]{ \Big( \sum_{({\bf g}, v)\in\ZZ^r_{\mathcal V}} \abs{ \calL_{{\bf g},v} \calS_{{\bf g}+{\bf h}}f}^{2} \Big)^{\frac{1}{2}} }_{L^2}
	\leq
	\tilde{c}_{{\bf h}} \norm{f}_{L^2}, \qquad f\in L^{2}(X), \ {\bf h} \in \ZZ^r,
	\end{equation}
	with some sequence of positive numbers $(\tilde{c}_{{\bf h}})_{{\bf h} \in \ZZ^r}$.
	Then for all $f\in L^{p_1}(X)$ we have
	\begin{align*}
	\norm[\Big]{\Bigl( \sum_{({\bf g}, v)\in\ZZ^r_{\mathcal V}} \abs{ \calL_{{\bf g},v} \calS_{{\bf g}+{\bf h}}f}^{2} \Bigr)^{\frac{1}{2}}}_{L^{p_1}}
	\lesssim (\mathcal V_1\cdots \mathcal V_{k_0})^{\frac{2-p_{1}}{2-p_{0}} \frac{1}{2}}
	\sup_{({\bf g}, v)\in\ZZ^r_{\mathcal V}} \norm{\calL_{{\bf g},v}}_{L^{p_{0}}\to L^{p_{0}}}^{\frac{2-p_{1}}{2-p_{0}} \frac{p_{0}}{2}} 
	\norm{\calL_{*, \ZZ^r_{\mathcal V}}}_{L^{p_{1}}\to L^{p_{1}}}^{\frac{2-p_{1}}{2}} \tilde{c}_{{\bf h}}^{\frac{p_{1}-p_{0}}{2-p_{0}}}
	\norm{f}_{L^{p_1}},
	\end{align*}
	where the implicit constants are numerical and in particular do not depend on ${\bf h} \in \ZZ^r$. 
\end{lemma}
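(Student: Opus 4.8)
The plan is to reduce the $L^{p_1}$ estimate to the $L^2$ hypothesis \eqref{eq:duo+rubio:off-diagm} via an interpolation argument driven by Lemma~\ref{lem:duo+rubiom}, treating the double-indexed family $(\calL_{{\bf g},v})$ as a single family indexed by the countable set $\ZZ^r_{\mathcal V}$. First I would fix ${\bf h}\in\ZZ^r$ and define, for each pair $({\bf g},v)\in\ZZ^r_{\mathcal V}$, the operator $\calM_{{\bf g},v}f\coloneqq \calL_{{\bf g},v}\calS_{{\bf g}+{\bf h}}f$. The key structural observation is that \eqref{eq:duo+rubio:off-diagm} is exactly the statement that the square function associated to $(\calM_{{\bf g},v})$ maps $L^2\to L^2$ with norm $\le\tilde c_{\bf h}$; in the language of Lemma~\ref{lem:duo+rubiom} (applied with $g_v$ replaced by $\calS_{{\bf g}+{\bf h}}f$) this is the "$q_\theta=2$, diagonal" input we want to upgrade.

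The heart of the argument is to run Lemma~\ref{lem:duo+rubiom} with $q_0=p_0$, $q_1=p_1$, and the resulting exponent $q_\theta$ determined by $\frac12=\frac{1-\theta}{p_0}$, i.e. $\theta=1-\frac{p_0}{2}$, so that $\frac1{q_\theta}=\frac{1-\theta}{p_0}+\frac{\theta}{p_1}=\frac12+\frac{\theta}{p_1}$; one checks $q_\theta\in(p_0,p_1)$ as required, and in fact one should first apply the lemma to get a bound at the exponent $q_\theta$ and then interpolate once more (or note that $q_\theta$ can be taken to equal $p_1$ by adjusting — actually one applies the lemma directly to land at $p_1$ by choosing $q_1$ to be a dual-type auxiliary exponent, which is the bookkeeping one must be careful with). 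The point is that Lemma~\ref{lem:duo+rubiom} gives
\[
\norm[\Big]{\Big(\sum_{({\bf g},v)\in\ZZ^r_{\mathcal V}}\abs{\calL_{{\bf g},v}\calS_{{\bf g}+{\bf h}}f}^2\Big)^{\frac12}}_{L^{p_1}}
\lesssim \Big(\sup_{({\bf g},v)}\norm{\calL_{{\bf g},v}}_{L^{p_0}\to L^{p_0}}\Big)^{1-\theta}\,\norm{\calL_{*,\ZZ^r_{\mathcal V}}}_{L^{p_1}\to L^{p_1}}^{\theta}\,\norm[\Big]{\Big(\sum_{({\bf g},v)}\abs{\calS_{{\bf g}+{\bf h}}f}^2\Big)^{\frac12}}_{L^{p_1}},
\]
with $\theta=\frac{p_1-p_0}{2-p_0}$ being the interpolation parameter matching the exponents $\tilde c_{\bf h}^{(p_1-p_0)/(2-p_0)}$ and the power $(2-p_1)/(2-p_0)\cdot p_0/2$ on the $L^{p_0}$-norms (after reconciling $q_\theta$ with $p_1$), and where the last factor is handled by \eqref{eq:PL2}: since $p_1\in(p_0,2]$ one has $\norm{(\sum_{\bf g}\abs{\calS_{\bf g}f}^2)^{1/2}}_{L^{p_1}}\lesssim\norm{f}_{L^{p_1}}$, and a translate in ${\bf h}$ does not change this. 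The factor $(\mathcal V_1\cdots\mathcal V_{k_0})^{\frac{2-p_1}{2-p_0}\frac12}$ appears because the $v$-indexed part of the square function is crude: for the $L^2$ input \eqref{eq:duo+rubio:off-diagm} it is already built in, but when we interpolate against the $L^{p_0}$ operator-norm bounds we only have those per index $({\bf g},v)$, so summing the finitely many $v\preceq\mathcal V$ trivially costs $\prod_i\mathcal V_i$ in $\ell^\infty\to\ell^2$, weighted by the interpolation exponent $\frac{2-p_1}{2-p_0}$.

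Concretely the steps are: (1) identify \eqref{eq:duo+rubio:off-diagm} with an $L^2$ square-function bound for the family $\calM_{{\bf g},v}=\calL_{{\bf g},v}\calS_{{\bf g}+{\bf h}}$; (2) record the trivial $L^{p_0}$ bound $\norm{(\sum_{({\bf g},v)}\abs{\calL_{{\bf g},v}g_{{\bf g},v}}^2)^{1/2}}_{L^{p_0}}\le (\prod_i\mathcal V_i)^{1/2}\sup\norm{\calL_{{\bf g},v}}_{L^{p_0}\to L^{p_0}}\norm{(\sum\abs{g_{{\bf g},v}}^2)^{1/2}}_{L^{p_0}}$ using $\ell^{p_0}\hookrightarrow\ell^2$ coordinatewise in the $v$-variable and $p_0\le 2$; (3) interpolate (2) against the $L^2$ bound from (1) with the maximal-operator control $\norm{\calL_{*,\ZZ^r_{\mathcal V}}}_{L^{p_1}\to L^{p_1}}$ supplying the endpoint, exactly as in Lemma~\ref{lem:duo+rubiom}; (4) insert $g_{{\bf g},v}=\calS_{{\bf g}+{\bf h}}f$ and apply the Littlewood--Paley bound \eqref{eq:PL2}, noting $\ZZ^r\ni{\bf g}\mapsto{\bf g}+{\bf h}$ is a bijection so the ${\bf h}$-shift is harmless; (5) collect the exponents. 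The main obstacle, and the only place demanding real care, is step (3)-(5): matching the interpolation parameter $\theta$ so that the $\tilde c_{\bf h}$-power comes out as $\frac{p_1-p_0}{2-p_0}$, the $\mathcal V$-power as $\frac{2-p_1}{2-p_0}\cdot\frac12$, the $L^{p_0}$-operator-norm power as $\frac{2-p_1}{2-p_0}\cdot\frac{p_0}{2}$, and the $\calL_*$-power as $\frac{2-p_1}{2}$ — this is a linear-algebra check on the three constraints $\frac1{p_1}=\frac{1-\theta}{p_0}+\frac{\theta}{2}$ (so $\theta=\frac{2-p_1}{2-p_0}\cdot\frac{2}{p_1}\cdot\frac{p_0}{p_0}$... i.e. $1-\theta=\frac{p_0(2-p_1)}{p_1(2-p_0)}$) against the three exponents, and it will work out precisely because the numerology was designed to. Since Lemma~\ref{lem:duo+rubiom} is quoted as a black box, most of the analytic work is already packaged; the remaining content is the elementary $\ell^{p_0}$-into-$\ell^2$ estimate producing the $\mathcal V$-factor and the Littlewood--Paley reduction, after which the proof concludes.
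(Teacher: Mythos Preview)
Your overall strategy matches the paper's: apply Lemma~\ref{lem:duo+rubiom} to the family $(\calL_{{\bf g},v})_{({\bf g},v)\in\ZZ^r_{\mathcal V}}$ with $q_0=p_0$, $q_1=p_1$, feed in the Littlewood--Paley bound \eqref{eq:PL2}, and combine with the $L^2$ hypothesis \eqref{eq:duo+rubio:off-diagm}. But the execution has a genuine gap. Lemma~\ref{lem:duo+rubiom} with $q_0=p_0$, $q_1=p_1$ does \emph{not} give an estimate at $L^{p_1}$: the parameter $\theta$ is forced to be $1-\tfrac{p_0}{2}$ by the constraint $\tfrac12=\tfrac{1-\theta}{q_0}$, and the lemma then outputs a bound at $L^{p_\theta}$ with $\tfrac{1}{p_\theta}=\tfrac{1}{2}+\tfrac{\theta}{p_1}$, so $p_\theta<p_1$ strictly. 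You notice this in passing (``first apply the lemma to get a bound at the exponent $q_\theta$ and then interpolate once more'') but then abandon it for alternatives that do not work: one cannot adjust $q_\theta$ to equal $p_1$, and your displayed $L^{p_1}$ bound with ``$\theta=\tfrac{p_1-p_0}{2-p_0}$'' is not what the lemma delivers. The paper's proof is precisely the two-step argument you glimpsed: Lemma~\ref{lem:duo+rubiom} at $p_\theta$, then interpolation between this $L^{p_\theta}$ estimate and the $L^2$ hypothesis \eqref{eq:duo+rubio:off-diagm} to reach $L^{p_1}$. The second interpolation parameter $\alpha=\tfrac{2-p_1}{2-p_0}$ is what produces all the stated exponents.

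Two further points. Your step~(2), a vector-valued $L^{p_0}$ bound for $(\calL_{{\bf g},v})$ with constant $(\prod_i\mathcal V_i)^{1/2}\sup\|\calL_{{\bf g},v}\|_{L^{p_0}\to L^{p_0}}$, is neither needed nor correct as written: for $p_0<2$ there is no free $\ell^2$-valued extension of a general bounded operator, and the embedding $\ell^{p_0}\hookrightarrow\ell^2$ goes the wrong way to introduce a $\mathcal V$-factor. The $\mathcal V$-factor actually arises more simply. After Lemma~\ref{lem:duo+rubiom} one is left with $\big\|\big(\sum_{({\bf g},v)}|\calS_{{\bf g}+{\bf h}}f|^2\big)^{1/2}\big\|_{L^{p_\theta}}$, and since $\calS_{{\bf g}+{\bf h}}f$ is independent of $v$ this equals $(\prod_i\mathcal V_i)^{1/2}\big\|\big(\sum_{\bf g}|\calS_{{\bf g}+{\bf h}}f|^2\big)^{1/2}\big\|_{L^{p_\theta}}$, which \eqref{eq:PL2} controls by $\|f\|_{L^{p_\theta}}$; the exponent $\tfrac{2-p_1}{2-p_0}\cdot\tfrac12$ then comes from the subsequent interpolation to $p_1$.
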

\begin{proof} The statement is trivial for $p_1 = 2$, so we assume that $1 \leq p_0 < p_1 < 2$. We apply Lemma~\ref{lem:duo+rubiom} with $q_0 = p_0$, $q_1 = p_1$, and $\VV = \ZZ^r_{\mathcal V}$. Then by that lemma and \eqref{eq:PL2} we obtain
	\begin{align*}
	\norm[\Big]{\Bigl( \sum_{({\bf g}, v)\in\ZZ^r_{\mathcal V}} \abs{ \calL_{{\bf g},v} \calS_{{\bf g}+{\bf h}}f}^{2} \Bigr)^{\frac{1}{2}}}_{L^{p_{\theta}}} 
	&\lesssim
	\Big(\sup_{({\bf g}, v)\in\ZZ^r_{\mathcal V}} \norm{\calL_{{\bf g},v}}_{L^{p_{0}}\to L^{p_{0}}}^{1-\theta}\Big) \norm{\calL_{*, \ZZ^r_{\mathcal V}}}_{L^{p_{1}}\to L^{p_{1}}}^{\theta}
	\norm[\Big]{\Bigl( \sum_{({\bf g}, v)\in\ZZ^r_{\mathcal V}} \abs{ \calS_{{\bf g}+{\bf h}}f}^{2} \Bigr)^{\frac{1}{2}}}_{L^{p_{\theta}}}\\
	&\lesssim
	(\mathcal V_1 \cdots \mathcal V_{k_0})^{\frac{1}{2}} \sup_{({\bf g}, v)\in\ZZ^r_{\mathcal V}} \norm{\calL_{{\bf g},v}}_{L^{p_{0}}\to L^{p_{0}}}^{1-\theta} \norm{\calL_{*, \ZZ^r_{\mathcal V}}}_{L^{p_{1}}\to L^{p_{1}}}^{\theta}
	\norm{f}_{L^{p_{\theta}}},
	\end{align*}
	where $\theta = 1 - \frac{p_0}{2}$ and $\frac{1}{p_\theta} = \frac{1-\theta}{p_0} + \frac{\theta}{p_1}$. Interpolating this with the hypothesis \eqref{eq:duo+rubio:off-diagm} gives the claim.
\end{proof}

\subsection{Long variations} The long variations from \ref{cond:abs3} will be controlled if we prove a stronger result, that is, the following square function inequality
\[
\Big\| \Big( \sum_{{\bf g} \in \ZZ^r} \big| \calH_{s({\bf g})} f \big|^2 \Big)^{\frac{1}{2}}
\Big\|_{L^{p_1}} \lesssim \| f \|_{L^{p_1}}.
\]
Note that \eqref{eq:PL1} implies
\begin{equation}\label{eq:PL_commutation}
\Big\| \Big( \sum_{{\bf g} \in \ZZ^r} \big| \calH_{s({\bf g})} f \big|^2 \Big)^{\frac{1}{2}}
\Big\|_{L^{p_1}} \leq 
\sum_{{\bf h} \in \ZZ^r} \Big\| \Big( \sum_{{\bf g} \in \ZZ^r} \big| \calH_{s({\bf g})} \calS_{{\bf g}+{\bf h}} f \big|^2 \Big)^{\frac{1}{2}}
\Big\|_{L^{p_1}}.
\end{equation}
Then \eqref{eq:off-diag} with $s_{\bf g} = s(\bf g)$ enables us to apply Lemma~\ref{lem:duo+rubio:squarem}. Indeed, we take $k_0=1$, $\mathcal V=1$, $\calL_{{\bf g},1}=\calH_{s({\bf g})}$, and $\tilde{c}_{\bf h} = c_{\bf h}$, and then, combining Lemma~\ref{lem:duo+rubio:squarem} with \eqref{eq:PL_commutation}, we obtain
\begin{equation*}
\Big\| \Big( \sum_{{\bf g} \in \ZZ^r} \big| \calH_{s({\bf g})} f \big|^2 \Big)^{\frac{1}{2}}
\Big\|_{L^{p_1}} \lesssim \sup_{s \in \RR^k_+} \| \calH_s \|_{L^{p_0} \to L^{p_0}}^{\frac{2-p_1}{2-p_0} \frac{p_0}{2}} \| \calH_{*} \|_{L^{p_1}\to L^{p_1}}^{\frac{2-p_1}{2}} \sum_{{\bf h} \in \ZZ^r} c_{\bf h}^{\frac{p_1-p_0}{2 - p_0}}
\| f \|_{L^{p_1}}.
\end{equation*}
In view of \ref{cond:abs1}, \eqref{eq:sum-aj}, and \ref{cond:abs6} for $K = \emptyset$, the quantities in the last expression are finite so we are done.

\subsection{Multiparameter Rademacher--Menshov  inequality}
We are left with estimating the short variations from \ref{cond:abs3}. Our next goal is to bring the cancellation condition from \ref{cond:abs6} into play. In the one-parameter case ($k=1$) an important tool to control short variations is the Rademacher--Menshov  inequality, which asserts   
that for a  given $L \in \ZZ_+$ and a family $\{a_s : s \in [1, 1+2^{L}] \}\subseteq\CC$ we have
\begin{equation}\label{eq:num2}
V^2\big(a_s : s \in [1, 1+2^{L}]\big) \leq \sqrt{2} \sum_{l \in \ZZ_+} \Big( 
\sum_{j \in [2^l]} \big| a_{1+j2^{L-l}} - a_{1+(j-1)2^{L-l}} \big|^2
\Big)^{\frac{1}{2}},
\end{equation}
provided that the map $s \mapsto a_s$ is continuous. The proof of this inequality can be found in \cite[Lemma~2.5]{bootstrap}, see also \cite[Lemma~2.1]{BMSW1}. Our aim is to prove a multiparameter variant of \eqref{eq:num2}.

 Let $\mathfrak{a} = \{ a_{{s}} : {s} \in [1, 1+2^{L}]^{k_0} \}\subseteq \CC$ be a~collection of numbers such that the map ${s} \mapsto a_{s}$ is a~continuous function of $k_0$ variables, and additionally the following condition is satisfied
\begin{equation}\label{vanishing}
a_{{s}} = 0 \text{ for each } s = (s_1, \dots, s_{k_0}) \text{ such that } 1 \in \{ s_1, \dots, s_{k_0}\}.
\end{equation} 
Then the multiparameter Rademacher--Menshov  inequality holds
\begin{equation}\label{eq:num}
V^2 \big(a_{{s}} : {s} \in [1, 1+2^{L}]^{k_0} \big) \leq 
2^{\frac{k_0}{2}} \sum_{l\in\ZZ_+^{k_0}} \Big( 
\sum_{j\in[{\bf 2}^l]}\big| \Delta_{2^{L} \odot {\bm 2}^{-l}}^{[k_0]}(a_{{\bf 1} + 2^{L} \odot {\bm 2}^{-l} \otimes (j-\bf 1)}) \big|^2
\Big)^{\frac{1}{2}},
\end{equation}
where we slightly abuse our notation and write $[{\bf 2}^l] \coloneqq [2^{l_1}] \times \cdots \times [2^{l_{k_0}}]$.
We refer also to \cite[Lemma 8.1]{KMT}.
\begin{proof}[Proof of inequality \eqref{eq:num}]
For clarity, we only present the proof for $k_0=2$, the general case is a simple elaboration of the ideas for $k_0=2$. Assume that $\mathfrak{a} = \{ a_{s_1, s_2} : (s_1, s_2) \in [1, 1+2^{L}]^2 \}\subseteq\CC$ is a collection of numbers such that the map $(s_1, s_2) \mapsto a_{s_1, s_2}$ is continuous as a function of two variables. Given $(1,1) \preceq (s_1,s_2) \prec (s_1',s_2') \preceq (1+2^{L}, 1+2^{L})$ we want to estimate the quantity $|a_{s'_1, s'_2} - a_{s_1, s_2}|$. By the continuity we may assume that  $(1,1) \prec (s_1, s_2)$. Note that
\begin{align*}
a_{s'_1, s'_2} - a_{s_1, s_2} & = \big( a_{s'_1, s'_2} - a_{s_1, s'_2} - a_{s'_1, s_2} + a_{s_1, s_2} \big) + \big( a_{s_1, s'_2} - a_{1, s'_2} - a_{s_1, s_2} + a_{1, s_2} \big) \\
& \qquad + \big( a_{s'_1, s_2} - a_{s_1, s_2} - a_{s'_1, 1} + a_{s_1, 1} \big) + \big( a_{1, s'_2} - a_{1, s_2} + a_{s'_1, 1} - a_{s_1, 1} \big),
\end{align*}
where $( a_{1, s'_2} - a_{1, s_2} + a_{s'_1, 1} - a_{s_1, 1})=0$ by 
condition \eqref{vanishing}.

In other words, the quantity $a_{s'_1, s'_2} - a_{s_1, s_2}$ can be rewritten as a combination of difference expressions corresponding to the disjoint rectangles $[s_1, s'_1) \times [s_2, s'_2)$, $[1, s_1) \times [s_2, s'_2)$, and $[s_1,s'_1) \times [1, s_2)$.

Moreover, observe that if $(1,1) \prec (s_1,s_2) \prec (s_1',s_2') \prec (s_1'',s_2'') \preceq (1+2^{L}, 1+2^{L})$, then the rectangles used to rewrite $a_{s'_1, s'_2} - a_{s_1, s_2}$ and $a_{s_1'', s_2''} - a_{s'_1, s'_2}$, respectively, are also disjoint (see Figure~\ref{fig:1}). 

\begin{figure}[ht]
	\begin{tikzpicture}[scale=0.8,
	axis/.style={very thick, ->, >=stealth'}, 	
	important line/.style={thick}, 
	dashed line/.style={dashed, thin}, 
	]
	
	\draw[axis] (0,0)  -- (6,0) node(xline)[right] {$s_1$};
	\draw[axis] (0,0) -- (0,6) node(yline)[left] {$s_2$};
	
	\draw (0.0,0.0) node[below] {$0$} -- (0.0,0.0);
	\draw (0,1) node[left] {$1$} -- (0.1,1);
	\draw (1,0) node[below] {$1$} -- (1,0.1);
	\draw (0,5) node[left] {$1+2^{L}$} -- (0.1,5);
	\draw (5,0) node[below] {$1+2^{L}$} -- (5,0.1);
	
	\draw[axis] (9,0)  -- (15,0) node(xline)[right] {$s_1$};
	\draw[axis] (9,0) -- (9,6) node(yline)[left] {$s_2$};
	
	\draw (9.0,0.0) node[below] {$0$} -- (9.0,0.0);
	\draw (9,1) node[left] {$1$} -- (9.1,1);
	\draw (10,0) node[below] {$1$} -- (10,0.1);
	\draw (9,5) node[left] {$1+2^{L}$} -- (9.1,5);
	\draw (14,0) node[below] {$1+2^{L}$} -- (14,0.1);
	
	\node[label={[below right]}, style={circle,fill,inner sep=2pt}] at (2,2) {};
	\node[label={[below right]}, style={circle,fill,inner sep=2pt}] at (3,3.5) {};
	\node[label={[below right]}, style={circle,fill,inner sep=2pt}] at (4.5,4.5) {};
	
	\node[label={[above right]$-$}, style={circle,inner sep=0pt}] at (1.9,1.85) {};
	\node[label={[below left]$+$}, style={circle,inner sep=0pt}] at (3.05,3.55) {};
	
	\node[label={[above right]$-$}, style={circle,inner sep=0pt}] at (2.9,3.35) {};
	\node[label={[below left]$+$}, style={circle,inner sep=0pt}] at (4.55,4.55) {};
	
	\node[style={circle,fill,inner sep=2pt}] at (11,2) {};
	\node[style={circle,fill,inner sep=2pt}] at (12,3.5) {};
	\node[style={circle,fill,inner sep=2pt}] at (13.5,4.5) {};
	
	\draw[dashed line] (1,0) -- (1,6);
	\draw[dashed line] (0,1) -- (6,1);
	
	\draw[dashed line] (10,0) -- (10,6);
	\draw[dashed line] (9,1) -- (15,1);
	
	\draw[important line] (2,2) -- (2,3.5);
	\draw[important line] (3,2) -- (3,3.5);
	\draw[important line] (3,3.5) -- (3,4.5);
	\draw[important line] (4.5,3.5) -- (4.5,4.5);
	
	\draw[important line] (2,2) -- (3,2);
	\draw[important line] (2,3.5) -- (3,3.5);
	\draw[important line] (3,3.5) -- (4.5,3.5);
	\draw[important line] (3,4.5) -- (4.5,4.5);
	
	\draw[dashed line] (11,3.5) -- (11,2);
	\draw[dashed line] (12,4.5) -- (12,3.5);
	
	\draw[dashed line] (12,2) -- (11,2);
	\draw[dashed line] (13.5,3.5) -- (12,3.5);
	
	\draw[important line] (11,2) -- (11,1);
	\draw[important line] (11,2) -- (10,2);
	
	\draw[important line] (12,3.5) -- (12,1);
	\draw[important line] (12,3.5) -- (10,3.5);
	
	\draw[important line] (13.5,4.5) -- (13.5,1);
	\draw[important line] (13.5,4.5) -- (10,4.5);
	
	\node[label={[below right]}, style={circle,fill,inner sep=1pt}] at (11,1) {};
	\node[label={[below right]}, style={circle,fill,inner sep=1pt}] at (12,1) {};
	\node[label={[below right]}, style={circle,fill,inner sep=1pt}] at (13.5,1) {};
	
	\node[label={[below right]}, style={circle,fill,inner sep=1pt}] at (10.0,2) {};
	\node[label={[below right]}, style={circle,fill,inner sep=1pt}] at (12,2) {};
	
	\node[label={[below right]}, style={circle,fill,inner sep=1pt}] at (10,3.5) {};
	\node[label={[below right]}, style={circle,fill,inner sep=1pt}] at (11,3.5) {};
	\node[label={[below right]}, style={circle,fill,inner sep=1pt}] at (13.5,3.5) {};
	
	\node[label={[below right]}, style={circle,fill,inner sep=1pt}] at (10,4.5) {};
	\node[label={[below right]}, style={circle,fill,inner sep=1pt}] at (12,4.5) {};
	
	\node[label={[above right]$+$}, style={circle,inner sep=0pt}] at (10.9,1.85) {};
	\node[label={[above left]$-$}, style={circle,inner sep=0pt}] at (11.1,1.85) {};
	\node[label={[below right]$-$}, style={circle,inner sep=0pt}] at (10.9,2.05) {};
	
	\node[label={[below left]$+$}, style={circle,inner sep=0pt}] at (12.1,3.55) {};
	\node[label={[below right]$-$}, style={circle,inner sep=0pt}] at (11.9,3.55) {};
	\node[label={[above right]$+$}, style={circle,inner sep=0pt}] at (11.9,3.35) {};
	\node[label={[above left]$-$}, style={circle,inner sep=0pt}] at (12.1,3.35) {};
	
	\node[label={[above right]$+$}, style={circle,inner sep=0pt}] at (9.9,1.85) {};
	\node[label={[above left]$-$}, style={circle,inner sep=0pt}] at (10.1,1.85) {};
	
	\node[label={[above right]$+$}, style={circle,inner sep=0pt}] at (9.9,3.35) {};
	\node[label={[above left]$-$}, style={circle,inner sep=0pt}] at (10.1,3.35) {};
	\node[label={[below right]$-$}, style={circle,inner sep=0pt}] at (9.9,3.55) {};
	\node[label={[below left]$+$}, style={circle,inner sep=0pt}] at (10.1,3.55) {};
	
	\node[label={[below right]$-$}, style={circle,inner sep=0pt}] at (10.9,3.55) {};
	\node[label={[below left]$+$}, style={circle,inner sep=0pt}] at (11.1,3.55) {};
	
	\node[label={[below right]$-$}, style={circle,inner sep=0pt}] at (9.9,4.55) {};
	\node[label={[below left]$+$}, style={circle,inner sep=0pt}] at (10.1,4.55) {};
	
	\node[label={[below right]$-$}, style={circle,inner sep=0pt}] at (11.9,4.55) {};
	\node[label={[below left]$+$}, style={circle,inner sep=0pt}] at (12.1,4.55) {};
	
	\node[label={[above left]$-$}, style={circle,inner sep=0pt}] at (12.1,1.85) {};
	\node[label={[below left]$+$}, style={circle,inner sep=0pt}] at (12.1,2.05) {};
	
	\node[label={[above left]$-$}, style={circle,inner sep=0pt}] at (12.1,0.85) {};
	\node[label={[below left]$+$}, style={circle,inner sep=0pt}] at (12.1,1.05) {};
	\node[label={[above right]$+$}, style={circle,inner sep=0pt}] at (11.9,0.85) {};
	\node[label={[below right]$-$}, style={circle,inner sep=0pt}] at (11.9,1.05) {};
	
	\node[label={[above left]$-$}, style={circle,inner sep=0pt}] at (13.6,0.85) {};
	\node[label={[below left]$+$}, style={circle,inner sep=0pt}] at (13.6,1.05) {};
	
	\node[label={[above left]$-$}, style={circle,inner sep=0pt}] at (13.6,3.35) {};
	\node[label={[below left]$+$}, style={circle,inner sep=0pt}] at (13.6,3.55) {};
	
	\node[label={[above right]$+$}, style={circle,inner sep=0pt}] at (10.9,0.85) {};
	\node[label={[below right]$-$}, style={circle,inner sep=0pt}] at (10.9,1.05) {};
	
	\node[label={[below left]$+$}, style={circle,inner sep=0pt}] at (13.6,4.55) {};
	
	\draw[->, distance=3pt,thick] (6.50,3) -- (8.0,3);
	
	\end{tikzpicture}
	\caption{Symbolic display of rewriting $a_{s'_1, s'_2} - a_{s_1, s_2}$ and $a_{s_1'', s_2''} - a_{s'_1, s'_2}$ by using difference expressions associated with disjoint rectangles. The three thick dots in each coordinate system represent the three points $(1,1) \prec (s_1,s_2) \prec (s_1',s_2') \prec (s_1'',s_2'') \preceq (1+2^{L}, 1+2^{L})$.}
	\label{fig:1}
\end{figure}
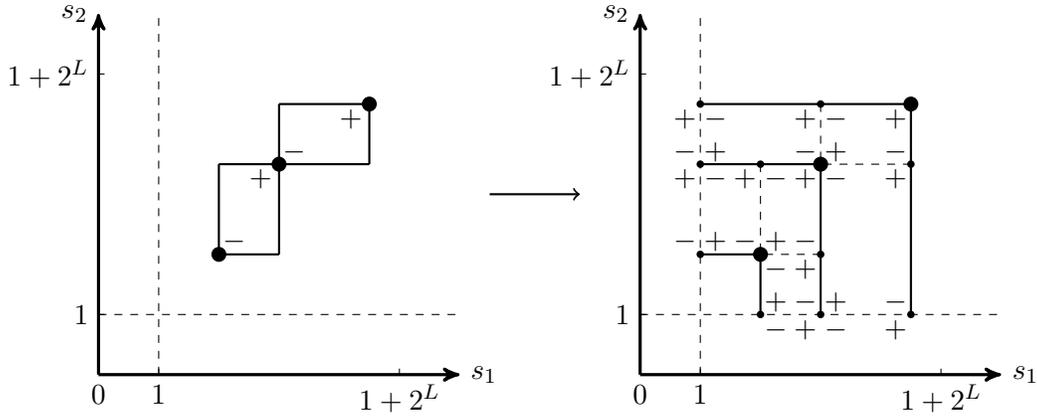

 Next, each of these rectangles can be decomposed into a family of disjoint rectangles of the form 
\[
\big[1+(j_1-1)2^{L-l_1} , 1+j_12^{L-l_1}  \big) \times \big[ 1+(j_2-1)2^{L-l_2} , 1+j_22^{L-l_2}\big)
\] 
and for any fixed pair $(l_1, l_2) \in \ZZ^2_+$ there are at most four rectangles of side lengths $2^{L-l_1}$ and $2^{L-l_2}$, respectively, used in the decomposition. By these considerations we are led to the following inequality
\[
V^2 \big(a_{s_1, s_2} : (s_1, s_2) \in [1, 1+2^{L}]^2 \big) \leq 
2 \sum_{l_1 \in \ZZ_+} \sum_{l_2\in \ZZ_+} \Big( 
\sum_{j_1 \in [2^{l_1}]} \sum_{j_2 \in [2^{l_2}]} \big| \Delta_{l_1,l_2,j_1,j_2}(\mathfrak{a}) \big|^2
\Big)^{\frac{1}{2}},
\]
where $\Delta_{l_1,l_2,j_1,j_2}(\mathfrak{a})$ stands for the expression
\[
a_{1+j_12^{L-l_1}, 1+j_22^{L-l_2}} - a_{1+(j_1-1)2^{L-l_1}, 1+j_22^{L-l_2}} - a_{1+j_12^{L-l_1}, 1+(j_2-1)2^{L-l_2}} + a_{1+(j_1-1)2^{L-l_1}, 1+(j_2-1)2^{L-l_2}}.
\]
This completes the proof of inequality \eqref{eq:num}.
\end{proof}

\subsection{Vanishing property} Given ${\bf g} \in \ZZ^r$ and $x \in X$ we would like to estimate $V^2( \calH_{s} f(x) : s\in Q_{\bf g})$ using \eqref{eq:num}. Unfortunately,   the collection $\{ \calH_{s} f(x) : s\in Q_{\bf g} \}$, a priori, may not have the vanishing property as in \eqref{vanishing}. To overcome this difficulty we introduce auxiliary projective measures.

For each ${\bf g} \in \ZZ^r$ and $K \subseteq [k]$ consider $\pi^{{\bf g},K} \colon Q_{\bf g} \rightarrow \RR^k_+$ given by
\[
\big(\pi^{{\bf g},K}(s)\big)_i\coloneqq  \begin{cases}
s_i, &\quad i\in [k] \setminus K,\\
\big({\rm left}(Q_{\bf g})\big)_i, & \quad i\in K. 
\end{cases} 
\]
Then for any $s \in Q_{\bf g}$, ${\bf g} \in \ZZ^r$, and $K \subseteq [k]$ we define the following projective operator
\[
\Pi_{s}^{{\bf g},K} \coloneqq   \calH_{\pi^{{\bf g},K}(s)}.
\]
 Applying the inclusion--exclusion principle we obtain
$\calH_s=\sum_{K\subseteq[k]}\calH_s^{{\bf g},K}$, where
\[
\calH^{{\bf g},K}_s\coloneqq  \sum_{K'\subseteq[k]\setminus K}(-1)^{|K'|}\Pi_s^{{\bf g}, K\cup K'}.
\]
In view of this decomposition the short variations from \ref{cond:abs3} are controlled as follows
\begin{equation}\label{eq:inex}
\Big\| \Big(
\sum_{{\bf g} \in \ZZ^r} \big| V^2\big(\calH_{s} f : s\in Q_{\bf g} \big) \big|^2 \Big)^{\frac{1}{2}}
\Big\|_{L^{p_1}} \lesssim \sum_{K \subseteq [k]} \Big\| \Big(
\sum_{{\bf g} \in \ZZ^r} \big| V^2\big(\calH_s^{{\bf g},K} f : s\in Q_{\bf g} \big) \big|^2 \Big)^{\frac{1}{2}} \Big\|_{L^{p_1}}.
\end{equation}

A few remarks about the properties of $\calH_s^{{\bf g},K}$ are in order. Firstly, let us emphasize that $\calH^{{\bf g}, K}_s = \calH^{{\bf g}, K}_{\pi^{{\bf g},K}(s)}$ holds for each $s \in Q_{\bf g}$. Thus, given $K \subseteq [k]$ we can think of $V^2\big(\calH_s^{{\bf g},K} f : s\in Q_{\bf g} \big)$ as the variation seminorm with $k - |K|$ parameters, that is, those corresponding to the set $[k] \setminus K$, since the parameters corresponding to $K$ are frozen. In particular, the summand corresponding to $K = [k]$ in \eqref{eq:inex} equals $0$. Moreover, if there exists $i \in [k] \setminus K$ such that $s_i = ({\rm left}(Q_{\bf g}))_i$, then $\calH^{{\bf g}, K}_s$ is the zero operator. Thus, the vanishing property for $\calH^{{\bf g}, K}_s$ is exactly what we need in order to apply \eqref{eq:num} with $k_0 = k - |K|$.

\subsection{Short variations} Now to prove Theorem~\ref{thm:2} it suffices to estimate the quantity 
\[
\Big\| \Big(
\sum_{{\bf g} \in \ZZ^r} \big| V^2\big(\calH_s^{{\bf g},K} f : s\in Q_{\bf g} \big) \big|^2 \Big)^{\frac{1}{2}} \Big\|_{L^{p_1}}
\]
for each $K \subsetneq [k]$. To keep our notation simple, we assume that $K = [k] \setminus [k_0]$ for $k_0 \in \ZZ_+$ with $k_0 \leq k$ (we have already excluded the case $k_0=0$ corresponding to $K = [k]$). With this choice of $K$ we define
\[
\calH^{{\bf g},K}_{l,j} f \coloneqq   \Delta_{{\bf h}_{{\bf g}, l, L}}^{[k_0]}(\calH^{{\bf g},K}_{{\bf s}_{{\bf g}, l, L, j}}) f, \qquad {\bf g} \in \ZZ^r, \
l \in \ZZ_+^{k_0},\ j\in [{\bf 2}^l], 
\]
where 
\[
{\bf s}_{{\bf g}, l, L, j}\coloneqq {\rm left}(Q_{\bf g}) \otimes \big(1+ (j_1-1)2^{L-l_1}, \dots, 1+ (j_{k_0}-1)2^{L-l_{k_0}}, 1, \dots, 1\big),
\]
and
\[
{\bf h}_{{\bf g}, l, L}\coloneqq {\rm left}(Q_{\bf g}) \otimes \big(2^{L-l_1} , \dots, 2^{L-l_{k_0}}, 0, \dots, 0\big).
\]
Applying \eqref{eq:num} with $a_{{s}} = \calH^{{\bf g}, K}_{{\rm left}(Q_{\bf g}) \otimes (s_1, \dots, s_{k_0}, 1, \dots, 1)} f(x)$ we may write
\[
\Big\| \Big(
\sum_{{\bf g} \in \ZZ^r} \big| V^2\big(\calH_s^{{\bf g},K} f : s\in Q_{\bf g} \big) \big|^2 \Big)^{\frac{1}{2}} \Big\|_{L^{p_1}} \leq 
2^{\frac{k_0}{2}} \sum_{l\in \ZZ_+^{k_0}} \Big\| \Big(\sum_{{\bf g} \in \ZZ^r} \sum_{j \in [{\bf 2}^l]} \big| \calH^{{\bf g},K}_{l,j} f \big|^{2}\Big)^{\frac{1}{2}} \Big\|_{L^{p_1}},
\]
since ${s} \mapsto \calH^{{\bf g}, K}_{{\rm left}(Q_{\bf g}) \otimes (s_1, \dots, s_{k_0}, 1, \dots, 1)} f(x)$ is continuous on $[1, 1+2^{L}]^{k_0}$ by \ref{cond:abs4} for $\mu$-almost every $x\in X$.

Now proceeding as in \eqref{eq:PL_commutation}, we obtain
\[
\Big\| \Big(
\sum_{{\bf g} \in \ZZ^r} \big| V^2\big(\calH_s^{{\bf g},K} f : s\in Q_{\bf g} \big) \big|^2 \Big)^{\frac{1}{2}} \Big\|_{L^{p_1}} \leq 2^{\frac{k_0}{2}} 
\sum_{{\bf h} \in \ZZ^r}\sum_{l\in \ZZ_+^{k_0}} \calI^{p_1}({\bf h},l),
\]
where for $q\in [1,\infty)$ the quantity $\calI^q({\bf h},l)$ is defined by
\[
\calI^q({\bf h},l)\coloneqq  \Big\| \Big(\sum_{{\bf g}\in\ZZ^r} \sum_{j \in [{\bf 2}^l]} \big| \calH^{{\bf g},K}_{l,j} \calS_{{\bf g}+{\bf h}} f)  \big|^{2}\Big)^{\frac{1}{2}}\Big\|_{L^q}.
\]
Observe that the family $\{ \calH^{{\bf g},K}_s : s \in Q_{\bf g} \}$ satisfies \ref{cond:abs6} with $[k]$ replaced by $[k_0]$. 
Thus, for $p \in \{p_0,2\}$,
\begin{equation} \label{eq:shortholder}
\| \calH^{{\bf g},K}_{l,j} \|_{L^p\to L^p} \lesssim 2^{-|l|_1}
\end{equation}
and we deduce from \eqref{eq:off-diag} and \eqref{eq:shortholder} that the assumptions of Lemma~\ref{lem:duo+rubio:squarem} are satisfied, since for each fixed $l \in \ZZ_+^{k_0}$, taking $\mathcal V = {\bf 2}^{l}$ and $\calL_{{\bf g},v} = \calH^{{\bf g},K}_{l,v}$, inequality \eqref{eq:duo+rubio:off-diagm} holds with
$\tilde{c}_{\bf h}= C 2^{|l|_1/2} \min\{2^{-|l|_1}, c_{\bf h} \},$
for $C \in \RR_+$ large enough. In other words, we have
\[
\calI^2({\bf h},l) \lesssim 2^{|l|_1/2} \min\{2^{-|l|_1}, c_{\bf h} \} \|f\|_{L^2}.
\]
Let us now observe that \ref{cond:abs1} easily implies that the maximal operator corresponding to the  family $\{ \calH^{{\bf g},K}_s : {\bf g} \in \ZZ^r, \, s \in Q_{\bf g} \}$ is bounded on $L^{p_1}(X)$. By \eqref{eq:shortholder} for $p=p_0$, we see that Lemma~\ref{lem:duo+rubio:squarem} gives 
\[
\calI^{p_1}({\bf h},l) \lesssim
\big( 2^{|l|_1/2} \min\{2^{-|l|_1}, c_{\bf h} \} \big)^{\frac{p_1-p_0}{2-p_0}}\|f\|_{L^{p_1}}.
\]
It remains to notice that by \eqref{eq:sum-aj} the ultimate expression is summable in ${\bf h}\in\ZZ^r$ and $l \in \ZZ_+^{k_0}$.

This completes the proof of Theorem \ref{thm:2}.

\section{Multiparameter oscillation inequality for Radon operators}\label{sec:rad}
In this section we will use Theorem~\ref{thm:2} to prove Theorem~\ref{thm:3} stating that for every $p\in(1, \infty)$ we have
\begin{align}
\label{eq:5}
\sup_{I \in \mathfrak S(\RR^k_+)}
\big\| O^2_I\big(\calM^\PPP_{s} f : s\in\RR^k_+\big)
\big\|_{L^{p}(\RR^{d})} \leq C \| f\|_{L^{p}(\RR^{d})}, \qquad f\in L^{p}(\RR^{d}),
\end{align}
whenever $\PPP=(P_1,\ldots, P_d) \colon \RR^k \to \RR^d$ is a polynomial mapping such that each $P_i$ is a monomial, that is, $P_i(s) = {s_1}^{\alpha_{i,1}} \cdots {s_k}^{\alpha_{i,k}}$ for some $\alpha_{i,1}, \dots, \alpha_{i,k} \in \NN$. We prove \eqref{eq:5} by induction on $d$. The proof will be divided into several steps, which will take up the bulk of this section.

\subsection{Associated matrix}
We fix a polynomial mapping
$\PPP=(P_1,\ldots, P_d) \colon \RR^k \to \RR^d$.  From now on we
assume that each $P_i$ is a monomial as above. We introduce the
$d \times k$ matrix $\mathbb{M}^\PPP$ whose rows $R_i$ are the vectors
$(\alpha_{i,1}, \dots, \alpha_{i,k})$. We may assume that
$\mathbb{M}^\PPP$ has no zero columns and rows. Indeed, if
$\mathbb{M}^\PPP$ has a zero column, say, the $k$-th one, then $s_k$
is not used in any of the polynomials $P_i$. Thus, one can just remove
the integration over this parameter from the definition of
$\calM^\PPP_s$ and hence the $k$-th column of $\mathbb{M}^\PPP$ can be
removed as well. On the other hand, if $\mathbb{M}^\PPP$ has a zero
row, say, the $d$-th one, then  one can think that the
$d$-th space variable is frozen. Then it turns out that the expected
inequality is a consequence of a related inequality stated for a
lower-dimensional space. Below we present a version of this fact.

\begin{lemma}
	\label{lem:2m}
	Given $d_0,k_0 \in \ZZ_+$ and $p \in (1, \infty)$ if a family $\{ T_{\RR^{d_0}}[\Xi_s] : s \in \RR^{k_0}_+\}$ satisfies the inequality
	\begin{equation*}
	\big \| O^2_I \big(T_{\RR^{d_0}}[\Xi_s]  f : s \in \RR^{k_0}_+ \big) \big \|_{L^p(\RR^{d_0})} \leq C \| f \|_{L^p(\RR^{d_0})}, \qquad f \in L^p(\RR^{d_0}),
	\end{equation*}
	for some $I \in \mathfrak S(\RR^{k_0}_+)$ and $C \in \RR_+$, then for each $i \in [d_0+1]$ the family $\{ T_{\RR^{d_0+1}}[\Xi_s^i] : s \in \RR^{k_0}_+\}$ 
	given by
	\[
	\Xi_s^i(\xi) \coloneqq  \Xi_s(\xi_1, \dots, \xi_{i-1},\xi_{i+1}, \dots, \xi_{d_0+1}), \qquad \xi = (\xi_1, \dots, \xi_{d_0+1}) \in \RR^{d_0+1},
	\]
	satisfies
	\begin{equation*}
	\big \| O^2_I \big( T_{\RR^{d_0+1}}[\Xi_s^i] f : s \in \RR^{k_0}_+ \big) \big\|_{L^p(\RR^{d_0+1})} \leq C \| f \|_{L^p(\RR^{d_0+1})}
	\end{equation*}
	for each $ f \in L^p(\RR^{d_0+1})$ such that the integral defining the norm on the left hand side exists. The same result holds for the maximal operators in place of the oscillation seminorms. 
\end{lemma}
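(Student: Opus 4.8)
The plan is to prove Lemma~\ref{lem:2m} by a slicing (Fubini) argument that exploits the fact that the multiplier $\Xi_s^i$ is independent of the frequency variable $\xi_i$, so that $T_{\RR^{d_0+1}}[\Xi_s^i]$ acts \emph{fibrewise} in the $x_i$-variable. As a first step, I would permute the coordinates of $\RR^{d_0+1}$ so as to move the $i$-th one to the last position; this is a measure-preserving bijection, it preserves the $L^p$ norm, it conjugates $T_{\RR^{d_0+1}}[\Xi_s^i]$ to $T_{\RR^{d_0+1}}[\Xi_s^{d_0+1}]$, and it has no effect whatsoever on the scale parameter $s$ (hence none on the oscillation seminorm). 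Thus I may assume $i = d_0+1$, and I then split $\RR^{d_0+1} = \RR^{d_0} \times \RR$, writing its points as $(y,t)$ with $y \in \RR^{d_0}$ and $t \in \RR$.

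The heart of the matter is the pointwise identity
\[
T_{\RR^{d_0+1}}[\Xi_s^{d_0+1}] f(y,t) = \big( T_{\RR^{d_0}}[\Xi_s]\, f(\cdot,t) \big)(y), \qquad s \in \RR^{k_0}_+,
\]
valid for a.e.\ $(y,t)$, whenever $f(\cdot,t) \in L^p(\RR^{d_0})$ (and by Tonelli's theorem this holds for a.e.\ $t$ once $f \in L^p(\RR^{d_0+1})$). In the setting this lemma will be applied to, $\Xi_s = \calF_{\RR^{d_0}}\nu_s$ for a finite measure $\nu_s$ on $\RR^{d_0}$; then $\Xi_s^{d_0+1} = \calF_{\RR^{d_0+1}}(\nu_s \otimes \delta_0)$, the operator $T_{\RR^{d_0+1}}[\Xi_s^{d_0+1}]$ is convolution with $\nu_s \otimes \delta_0$, and the identity is an immediate application of Fubini's theorem. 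For general multipliers one obtains the same identity by first checking it on the dense subclass of finite linear combinations of tensor products $g \otimes \phi$ (with $g,\phi$ Schwartz) via the Fourier inversion formula, and then extending it by density.

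With the identity in hand the conclusion follows quickly. Since $O^2_I(\,\cdot : s \in \RR^{k_0}_+)$ is computed pointwise from the values of the family indexed by $s$, the identity yields, for a.e.\ $t$,
\[
O^2_I\big( T_{\RR^{d_0+1}}[\Xi_s^{d_0+1}] f : s \in \RR^{k_0}_+\big)(y,t) = O^2_I\big( T_{\RR^{d_0}}[\Xi_s]\,(f(\cdot,t)) : s \in \RR^{k_0}_+\big)(y).
\]
Raising to the power $p$, integrating in $y$ and invoking the hypothesis applied to $f(\cdot,t) \in L^p(\RR^{d_0})$, then integrating in $t$ and using Tonelli once more, I would get
\[
\big\| O^2_I\big( T_{\RR^{d_0+1}}[\Xi_s^{d_0+1}] f : s\big) \big\|_{L^p(\RR^{d_0+1})}^p \leq C^p \int_{\RR} \| f(\cdot,t) \|_{L^p(\RR^{d_0})}^p \, \dif t = C^p \| f \|_{L^p(\RR^{d_0+1})}^p,
\]
which is exactly the claimed inequality after taking $p$-th roots. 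Replacing $O^2_I(\,\cdot\,)$ by $\sup_{s \in \RR^{k_0}_+} |\,\cdot\,|$ throughout, the identical computation handles the maximal operators.

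No step here is deep; the two points that genuinely need care are (i) the pointwise identity above — transparent in the convolution case, and otherwise the routine density argument just sketched — and (ii) the joint measurability in $(y,t)$ of the left-hand oscillation (resp.\ maximal) function, which is needed to invoke Fubini/Tonelli. I expect (ii) to be the only mildly delicate point: it is inherited from the measurability already built into the hypothesis through the pointwise identity, and in concrete applications it also follows from the continuity of $s \mapsto T_{\RR^{d_0}}[\Xi_s] g$, which lets one reduce the suprema in the definition of $O^2_I$ to suprema over countable sets.
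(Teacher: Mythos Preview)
Your proof is correct and follows essentially the same slicing/Fubini strategy as the paper: reduce to $i=d_0+1$ by symmetry, use that $T_{\RR^{d_0+1}}[\Xi_s^{d_0+1}]$ acts fibrewise as $T_{\RR^{d_0}}[\Xi_s]$ on each slice $f(\cdot,t)$, and then integrate out. The only cosmetic difference is that the paper phrases the last step via duality (pairing with a test function $g$ and applying H\"older's inequality twice) rather than integrating the $p$-th power directly as you do; both routes are equivalent here, and the paper likewise flags measurability as the only point requiring care.
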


\noindent We emphasize that each time we will apply Lemma~\ref{lem:2m} there are no measurability problems.

\begin{proof}
	We focus on the oscillation seminorm case and the proof for the maximal operator is analogous. 
	By symmetry it suffices to consider $i=d_0+1$. Hence, $\Xi_s^{d_0+1}(\xi,\xi_{d_0+1})=\Xi_s(\xi)$. For $f \in L^p(\RR^{d_0+1})$ we let $f_y(x) \coloneqq  f(x,y)$, $x\in\RR^{d_0}$, $y\in\RR$. Using H\"older's inequality twice we obtain
	\begin{align*}
	&\Big| \int_{\RR} \int_{\RR^{d_0}} O^2_I\big(T_{\RR^{d_0+1}}[\Xi_s^{d_0+1}] f : s\in\RR^{k_0}_+\big)(x,y)g(x,y)\,\dif x\,\dif y\Big| \\
	&\qquad =\Big| \int_{\RR} \int_{\RR^{d_0}} O^2_I\big(T_{\RR^{d_0}}[\Xi_s] f_y : s\in\RR^{k_0}_+\big)(x)g_y(x)\,\dif x\,\dif y\Big|\\
	&\qquad \leq \int_{\RR} \big\| O^2_I\big(T_{\RR^{d_0}}[\Xi_s] f_y : s\in\RR^k_+\big)\big\|_{L^p(\RR^{d_0})} \| g_y\|_{L^{p'}(\RR^{d_0})}\,\dif y\\
	&\qquad \leq C \int_{\RR} \|  f_y\|_{L^p(\RR^{d_0})} \| g_y\|_{L^{p'}(\RR^{d_0})}\,\dif y\\
	&\qquad \leq C \| f\|_{L^p(\RR^{d_0+1})} \| g\|_{L^{p'}(\RR^{d_0+1})}.
	\end{align*}
	Taking the supremum over functions $g$ such that $\| g\|_{L^{p'}(\RR^{d_0+1})}=1$ we obtain the claim. 
\end{proof}

\subsection{Decomposition} Now we have to decompose $\calM_s^\PPP$ in such a way that either Theorem~\ref{thm:2} or the induction hypothesis can be applied to estimate each of the resulting components. 
Since $\nu_s^\PPP(\RR^d) = 1$, the corresponding multiplier $\mathfrak m_s^\PPP$ does not vanish near the origin and
Theorem~\ref{thm:2} cannot be applied directly to the averaging operators $\calH_s=\calM_s^\PPP$. To overcome this problem we will proceed more delicately. 

 Given a fixed Schwartz function $\Phi \colon \RR\rightarrow \RR_+$ such that $\int_{\RR}\Phi(x) \, \dif x=1$ we let $\Upsilon \coloneqq  \calF_\RR \Phi$. Then for each $D \subseteq [d]$ and $s \in \RR^k_+$ we define $\mathfrak m_s^{\PPP, D} \colon \RR^d \to \CC$ by
 \[
 \mathfrak m_s^{\PPP, D}(\xi) \coloneqq  \Big( \prod_{i \in D} \Upsilon \big(P_i(s) \xi_i\big) \Big) \mathfrak m_s^{\PPP}\big( \pi^D(\xi) \big),
 \] 
 where $\pi^D(\xi)\in\RR^d$ satisfies $(\pi^D(\xi))_i = 0$ if $i \in D$ and $(\pi^D(\xi))_i = \xi_i$ otherwise. The operator $T_{\RR^d}[\mathfrak m_s^{\PPP, D}]$ may be seen as a composition of two operators acting on different variables. Precisely, one of them is an operator of Radon type associated with a ``shorter'' polynomial mapping consisting of $d - |D|$ monomials, while the other one is a convolution with some smooth function of $|D|$ variables. 

Clearly, we have $\mathfrak m^{\PPP, \emptyset}_s = \mathfrak m^\PPP_s$ and 
\begin{equation} \label{eq:mudecomp}
\mathfrak m^\PPP_s = \sum_{D \subseteq [d] : D \neq \emptyset} (-1)^{|D|+1} \mathfrak m^{\PPP, D}_s+\tilde{\mathfrak m}^\PPP_s,
\end{equation}
where $\tilde{\mathfrak m}^\PPP_s \coloneqq   \sum_{D \subseteq [d]} (-1)^{|D|} \mathfrak m^{\PPP, D}_s$. Thus, it suffices to estimate uniformly in $I \in \mathfrak S(\RR^k_+)$ the quantity
\begin{equation*}
\big\|O^2_I\big(T_{\RR^d}[\tilde{\mathfrak m}^\PPP_s] f : s\in\RR_+^k\big)\big\|_{L^p(\RR^d)}+  \sum_{D \subseteq [d] : D \neq \emptyset} \big\|O^2_I\big(T_{\RR^d}[\mathfrak m^{\PPP, D}_s]f : s\in\RR_+^k\big)\big\|_{L^p(\RR^d)}.
\end{equation*}
To bound the first quantity we will use Theorem \ref{thm:2}, whereas the second quantity will be controlled by the inductive argument.
The advantage of this decomposition is that the following estimate
\begin{equation} \label{cond:2}
|\tilde{\mathfrak m}_s^\PPP(\xi)|\lesssim \prod_{i =1}^d \min \big\{ |P_i(s) \xi_i|^{\delta}, |P_i(s) \xi_i|^{-\delta} \big\},\qquad \xi\in\RR^d,
\end{equation} 
holds for some small $\delta \in \RR_+$. In order to see this observe that by the inclusion--exclusion principle we have
$
\tilde{\mathfrak m}^\PPP_s = \sum_{D \subseteq [d]} \tilde{\mathfrak m}^{\PPP,D}_s,
$
where $\tilde{\mathfrak m}^{\PPP,D}_s \colon \RR^d \to \CC$ is given by
\[
\tilde{\mathfrak m}^{\PPP,D}_s(\xi)\coloneqq  \Big( \prod_{i \in D} \Upsilon \big( P_i(s) \xi_i\big) \Big) \Big( \prod_{i \in [d] \setminus D } 1 - \Upsilon \big( P_i(s) \xi_i\big)  \Big) \Big( \square^D \mathfrak m_s^\PPP (\xi) \Big)
\]
and
\[
\square^D \mathfrak m_s^\PPP(\xi)\coloneqq  \sum_{D' \subseteq D} (-1)^{|D'|} \mathfrak m_s^{\PPP}\big( \pi^{D'}(\xi) \big).
\]
Then, using the following observations:
\begin{enumerate}[itemsep=0.2cm, label=(\roman*)]
	\item a version of the van der Corput estimate for nonsmooth amplitude functions implies $|\square^D \mathfrak m_s^\PPP(\xi)| \lesssim |P_i(s) \xi_i|^{-\delta_0}$ for $i \notin D$ with some $\delta_0 \in \RR_+$ depending on $\PPP$ (see \cite[Proposition~B.2]{bootstrap});
	\item the trivial estimate $|\bm e(x) - 1| \lesssim |x|$ gives $|\square^D \mathfrak m_s^\PPP(\xi)| \lesssim \min \{ 1, | P_i(s) \xi_i| \}$ for $i \in D$;
	\item since $\Phi$ is a Schwartz function of integral $1$,  we have $|\Upsilon( P_i(s) \xi_i)| \lesssim \min\{1, | P_i(s) \xi_i|^{-1}\}$ for $i \in D$, and $|1 - \Upsilon(P_i(s) \xi_i)| \lesssim \min\{1, | P_i(s) \xi_i|\}$, for $i \not\in D$;
\end{enumerate}
we conclude that
\begin{align*}
| \tilde{\mathfrak m}^{\PPP,D}_s(\xi) | \lesssim \prod_{i =1}^d \min \big\{ |P_i(s) \xi_i|^{\delta}, |P_i(s) \xi_i|^{-\delta} \big\} 
\end{align*}
holds with some $\delta \in \RR_+$ for every $\xi \in \RR^d$ and $D \subseteq [d]$. Consequently, inequality \eqref{cond:2} is satisfied.

\subsection{Oscillation inequality for bumps} \label{sec:bumps} Before we bound the family $\{ T_{\RR^d}[\tilde{\mathfrak m}^\PPP_s] : s \in \RR^k_+\}$ we take a closer look at the multipliers $\mathfrak m^{\PPP, D}_s$ that arise in the decomposition in \eqref{eq:mudecomp}. As mentioned before, these objects are dual in nature and, loosely speaking, the smaller the Radon part is, the less singular the operator becomes. Since for $D = [d]$ the Radon part disappears completely, this case should be the starting point of our analysis. Below we prove the oscillation inequality for the family $\{ T_{\RR^d}[\mathfrak m^{\PPP,[d]}_s] : s \in \RR^k_+\}$. 

Let us begin with the simplest case $d=1$. We first notice that for $k=1$ and $P = {\rm id}$ the inequality
\begin{equation}\label{eq:bumps}
\sup_{I \in\mathfrak{S}(\RR_+)}
\norm[\big]{O_I^2 \big(T_\RR[\mathfrak m_s^{{\rm id}, [1]}] f : s \in \RR_+ \big)}_{L^p(\RR)}
\lesssim_p
\norm{f}_{L^p(\RR)}, \qquad f\in L^p(\RR),
\end{equation}
is well known (see \cite{jsw} or \cite{MSzW} for the recent approach). 

 All the other cases with $d=1$ are direct consequences of \eqref{eq:bumps}. Indeed, if $k \in \ZZ_+$ and $P$ is an arbitrary nonconstant monomial of $k$ variables whose coefficient equals $1$, then we can just substitute $S = P(s)$. Since $s \prec s'$ implies $P(s) < P(s')$, for each $I \in \mathfrak S(\RR^k_+)$ we obtain the equality 
\[
\big\|O^2_I\big(T_\RR[\mathfrak m_s^{P, [1]}] f : s\in\RR_+^k\big)\big\|_{L^p(\RR)}
=
\big\|O^2_{I^P}\big(T_\RR[\Upsilon_{S}] f : S \in\RR_+\big)\big\|_{L^p(\RR)},
\]
where $\Upsilon_{S}(\xi) \coloneqq  \calF_\RR \Phi(S\xi)$ 
and $I^P \coloneqq   \{ P(I_j) : j \in \NN \} \in \mathfrak S(\RR_+)$.
Thus,
\[
\sup_{I \in \mathfrak S(\RR^k_+)} \big\|O^2_I\big(T_\RR[\mathfrak m_s^{P, [1]}] f : s\in\RR_+^k\big)\big\|_{L^p(\RR)}
\leq
\sup_{I \in \mathfrak S(\RR_+)} \big\|O^2_I\big(T_\RR[\Upsilon_{S}] f : S \in\RR_+\big)\big\|_{L^p(\RR)}  
\]
and we are reduced to the case $k=1$ and $P = {\rm id}$ treated before.

Finally, observe that the general case $d \in \NN$ follows by multiple application of the one-dimensional result. For each $D \subseteq [d]$ and $s \in \RR^k_+$ denote
\begin{align*}
\Upsilon^{\PPP,D}_s(\xi) \coloneqq   \prod_{i \in D} \Upsilon(P_i(s)\xi_i),
\qquad \xi = (\xi_1, \dots, \xi_d) \in \RR^d. 
\end{align*}
Then for a given $s^\circ \in \RR_+^k$ and an arbitrary set $\mathbb{S} \subseteq \RR_+^k$ we have
	\begin{align*}
	\sup_{s \in \mathbb{S}} \, \big\vert 
	T_{\RR^d} \big[ \mathfrak m_s^{\PPP, [d]}  - \mathfrak m_{s^\circ}^{\PPP, [d]} \big] f \big\vert
	& = \sup_{s \in \mathbb{S}} \, \big\vert 
	T_{\RR^d} \big[ \Upsilon^{\PPP,[d]}_s - \Upsilon^{\PPP,[d]}_{s^\circ} \big] f \big\vert \\
	& \leq \sum_{i \in [d]} \, \sup_{s \in \mathbb{S}} \, \big\vert T_{\RR^d} \big[ \Upsilon^{\PPP,[i-1]}_{s} \big( \Upsilon^{\PPP,\{i\}}_{s} - \Upsilon^{\PPP,\{i\}}_{s^\circ} \big) \Upsilon^{\PPP,[d] \setminus [i]}_{s^\circ} \big] f \big\vert \\
	& \lesssim \sum_{i \in [d]} \, \calM_{\ast}^{(1)}\cdots \calM_{\ast}^{(i-1)} \calM_{\ast}^{(i+1)} \cdots \calM_{\ast}^{(d)} \Big( \sup_{s \in \mathbb{S}} \,
	\big\vert T_{\RR^d} \big[ \Upsilon^{\PPP,\{i\}}_{s} - \Upsilon^{\PPP,\{i\}}_{s^\circ} \big] f \big\vert
	\Big),
	\end{align*}
	where $\calM_{\ast}^{(i)}$ denotes the Hardy--Littlewood maximal operator acting on the $i$-th variable. Hence, invoking the Fefferman--Stein vector-valued inequality for each $\calM_{\ast}^{(i)}$ we see that
	\[
	\big\|O^2_I\big(T_{\RR^d} [ \mathfrak m_s^{\PPP, [d]}] f : s\in\RR_+^k\big)\big\|_{L^p(\RR^d)}
	\lesssim_{p,\PPP} \sum_{i \in [d]}
	\big\|O^2_I\big(T_{\RR^d} \big[ \Upsilon^{\PPP,\{i\}}_{s} \big] f : s\in\RR_+^k\big)\big\|_{L^p(\RR^d)}
	\]
	holds uniformly in $I \in \mathfrak S(\RR^k_+)$. This combined with the one-dimensional result and Lemma~\ref{lem:2m} yields 
\begin{equation}\label{thm:9}	
	\sup_{I \in \mathfrak S(\RR^k_+)}
	\big\| O_I^2 \big(T_{\RR^d} [ \mathfrak m_s^{\PPP, [d]}] f:s\in\RR^k_+ \big) \big\|_{L^p(\RR^d)}
	\lesssim_{p,\PPP}
	\norm{f}_{L^p(\RR^d)}, \qquad f\in L^p(\RR^d),
\end{equation}
which finishes the proof in the general case.
\subsection{Splitting into long and short variations}  
Our next goal is to prove the following inequality
\begin{equation}\label{eq:sigma}
\big\| V^2\big(T_{\RR^d}[\tilde{\mathfrak m}_s^\PPP(\xi)] f : s\in\RR_+^k\big)\big\|_{L^p(\RR^{d})}\lesssim_{p,\PPP} \|f\|_{L^p(\RR^{d})}
\end{equation}
by using Theorem~\ref{thm:2}. Inequality \eqref{eq:sigma}, in the case $d=1$, follows from \eqref{eq:mudecomp} and \eqref{thm:9}. Before we prove it for general $d\in\ZZ_+$ we have to establish \ref{cond:abs3} and \ref{cond:abs5} with $\calH_s=T_{\RR^d}[\tilde{\mathfrak m}_s^\PPP]$. 

Let us begin with \ref{cond:abs3}. In the one-parameter case it is known that
\begin{align}
\label{eq:6}
V^2(a_s : s \in \RR_+) \lesssim V^2(a_s : s \in \DD) + \Big(\sum_{n \in \ZZ} V^2\big(a_s : s \in [2^n, 2^{n+1})\big)^2 \Big)^{\frac{1}{2}}
\end{align}
holds with the implicit constant independent of $\{ a_s : s \in \RR_+ \} \subseteq \CC$. Now we have to prove 
a multiparameter variant of \eqref{eq:6}, which is a fairly complicated task due to the fact that
the map $s \mapsto \PPP(s)$ may neither be injective nor surjective. Here we present an argument, which is the core of our paper.

Let $r \in \ZZ_+$ be the rank of $\mathbb{M}^\PPP$. Of course, $r \leq \min\{d,k\}$. We choose a set $\{i_1, \dots, i_r\} \subseteq [d]$ such that the corresponding rows $R_{i_1}, \dots, R_{i_r}$ are linearly independent. It is straightforward to check that the map $\PPP^r \colon \RR^k_+ \rightarrow \RR^r_+$ defined by $s \mapsto (P_{i_1}(s), \dots, P_{i_r}(s))$ is surjective. Moreover, for each $s \in \RR^k_+$ the value of $\PPP(s)$ is completely determined by the value of $\PPP^r(s)$, since each row in $\mathbb{M}^\PPP$ can be expressed as a linear combination of vectors $R_{i_1}, \dots, R_{i_r}$. For each $n \in \ZZ^r$ we pick some $s(n) \in \RR^k_+$ such that $\PPP^r(s(n)) = {\bf 2}^n \in \DD^r$ and the long variations will be defined with the aid of the set $\{ s(n) : n \in \ZZ^r\}$. 

Next, we focus on the short variations. We construct 
a family of cubes $\{ Q_n \subseteq \RR^k_+ : n \in \ZZ^r\}$ not necessarily disjoint whose union covers $\RR^k_+$ and such that the elements of $Q_n$ are comparable with $s(n)$.

For each $N \in \ZZ_+$ consider $E_N \coloneqq   \{  \PPP^r(s) : s \in [2^{-N}, 2^{N}]^k \} \cap [1,2]^r$, which is a closed subset of $[1,2]^r$ with its natural topology. By Baire's theorem there exists $E_{N}$ with nonempty interior. Then, exploiting the fact that $\PPP^r$ is surjective, together with the relation $\PPP^r(s \otimes u) = \PPP^r(s) \otimes \PPP^r(u)$, one can show that
\begin{equation}\label{eq:Q1}
  [1,2]^r \subseteq \big\{  \PPP^r (s) : s \in [2^{-N_0}, 2^{N_0}]^k \big\}
\end{equation}
holds for sufficiently large $N_0 \in \ZZ_+$. Assuming that $N_0 \geq 2$ we also have
\begin{equation}\label{eq:Q2}
\Big( \min\{ s_i : i \in [k] \} \geq 1 \ \wedge \ \max\{ s_i : i \in [k] \} \geq 2^{N_0} \Big) \implies \Big( \PPP^r(s) \notin [1,2]^r \Big), 
\end{equation}
since $\mathbb{M}^\PPP$ has no zero columns. We will show that the choice
\begin{equation} \label{eq:Qn}
Q_n \coloneqq   \big\{s \in \RR^k_+ : 2^{-2 N_0} \odot s(n) \preceq s \preceq (2^{-2 N_0}(1+2^{4N_0})) \odot s(n)\big\}, \qquad n \in \ZZ^r,
\end{equation}
will be good for our purposes (thus, $4N_0$ can play the role of the parameter $L$ from Section~\ref{sec:var}). 

\begin{lemma}\label{lem:splitting}
Given $k, d \in \ZZ_+$ let  $\PPP=(P_1,\ldots, P_d) \colon \RR^k \to \RR^d$ be a polynomial map such that each $P_i$, $i\in [d]$,  is a nonconstant monomial with coefficient $1$. Define $r,s(n),Q_n$ as before. Then the inequality 

\begin{equation}\label{eq:splitting}
V^2(a_s : s \in \RR^k_+) \lesssim
V^2(a_{s(n)}  : n \in \ZZ^r ) + \Big(
\sum_{n \in \ZZ^r} \big( V^2(a_{s} : s \in Q_n) \big)^2 \Big)^{\frac{1}{2}}
\end{equation}
holds uniformly for families $\{a_s : s \in \RR^k_+\}\subseteq \CC$ of numbers satisfying $\PPP(s)=\PPP(s') \implies a_s = a_{s'}$.
\end{lemma}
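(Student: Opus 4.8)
The plan is to first push the problem from $\RR^k_+$ down to $\RR^r_+$. Since each $P_{i_\ell}$ is a nonconstant monomial, $s\prec s'$ forces $\PPP^r(s)\prec\PPP^r(s')$; moreover every row of $\mathbb{M}^\PPP$ lies in the span of $R_{i_1},\dots,R_{i_r}$, so $\PPP^r(s)=\PPP^r(s')$ implies $\PPP(s)=\PPP(s')$ and hence $a_s=a_{s'}$. As $\PPP^r$ is onto $\RR^r_+$, there is thus a well-defined $b\colon\RR^r_+\to\CC$ with $a=b\circ\PPP^r$, and along any chain $s^{(0)}\prec\dots\prec s^{(J)}$ in $\RR^k_+$ one has $a_{s^{(j)}}=b_{t^{(j)}}$ with $t^{(j)}:=\PPP^r(s^{(j)})$ a chain in $\RR^r_+$ that, by construction, \emph{lifts} to a chain in $\RR^k_+$. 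Since $b_{{\bf 2}^n}=a_{s(n)}$ irrespective of the choice of $s(n)$, it suffices to bound $\sum_j|b_{t^{(j)}}-b_{t^{(j-1)}}|^2$ uniformly over all liftable chains $(t^{(j)})$ in terms of $V^2(a_{s(n)}:n\in\ZZ^r)=V^2(b_{{\bf 2}^n}:n\in\ZZ^r)$ and an $\ell^2(\ZZ^r)$ square function built from variations of $b$ over the dyadic boxes $R_n:=\prod_{\ell\in[r]}[2^{n_\ell},2^{n_\ell+1})$, $n\in\ZZ^r$.

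Next I would establish an $r$-parameter analogue of \eqref{eq:6}: one iterates the one-parameter splitting in the $r$ coordinates of $\RR^r_+$, one coordinate at a time. Each single step is exactly the one-dimensional inequality, whose proof telescopes a chain through the \emph{right} endpoints $2^{m+1}$ of the dyadic intervals; this buffering is what guarantees that, after all $r$ coordinates have been processed, the purely-dyadic contributions reassemble into strictly increasing chains in $\DD^r$ (possibly with repeated nodes, which are discarded), so they are controlled by $V^2(b_t:t\in\DD^r)^2$, while the residual pieces — those in which some coordinates are frozen inside a single dyadic block and the remaining ones merely move within one dyadic block — are absorbed into $\sum_{n}V^2(b_t:t\in R_n)^2$. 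Making this bookkeeping consistent, i.e.\ ensuring that the ``diagonal'' travel of $(t^{(j)})$ across the dyadic lattice of $\RR^r_+$ (whose visited box indices need not form a strictly increasing sequence in $\ZZ^r$ when $r\ge2$) does not destroy strict monotonicity, is the \emph{parameters-gluing} difficulty, and it is the delicate core of the proof.

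It remains to replace each local quantity $V^2(b_t:t\in\{t^{(j)}\}\cap R_n)$ by $V^2(a_s:s\in Q_n)$, i.e.\ to lift the portion of the chain lying in $R_n$ — say $t^{(j_1)}\prec\dots\prec t^{(j_m)}$, a consecutive block of indices since $R_n$ is a box — to a chain inside $Q_n$. In logarithmic coordinates put $\delta^{(i)}:=\log t^{(j_{i+1})}-\log t^{(j_i)}\succ\bm 0$; liftability of $(t^{(j)})$ provides a strictly positive preimage of each $\delta^{(i)}$ under the $r\times k$ matrix $\mathbb{M}_r$ formed by $R_{i_1},\dots,R_{i_r}$. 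A parametric linear-programming/compactness argument — the minimal $\ell^1$ nonnegative preimage is a $1$-homogeneous continuous function of the right-hand side on the closed cone $\mathbb{M}_r(\RR^k_+\cup\{\bm 0\})$ — yields preimages $\xi^{(i)}\succeq\bm 0$ with $|\xi^{(i)}|\lesssim_\PPP|\delta^{(i)}|$, which one nudges by tiny multiples of the given positive preimages to make them strictly positive while keeping $\sum_i|\xi^{(i)}|\lesssim_\PPP\sum_i|\delta^{(i)}|<r\log2$. Since each coordinate of $\log t^{(j)}$ varies by less than $\log2$ within $R_n$, starting the lift at a preimage of $t^{(j_1)}$ furnished by \eqref{eq:Q1} (which lies in $s(n)\otimes[2^{-N_0},2^{N_0}]^k$) and following the increments $\xi^{(i)}$ keeps the entire lifted chain inside $Q_n$, provided $N_0$ is taken large enough in terms of $\PPP$ — this is precisely what dictates the factor $1+2^{4N_0}$ in the definition of $Q_n$ and the choice $L=4N_0$. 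The resulting chain $\tilde I_1\prec\dots\prec\tilde I_m$ in $Q_n$ satisfies $\PPP^r(\tilde I_i)=t^{(j_i)}$, hence $a_{\tilde I_i}=b_{t^{(j_i)}}$, so $V^2(b_t:t\in\{t^{(j)}\}\cap R_n)\le V^2(a_s:s\in Q_n)$.

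Assembling the three steps and taking the supremum over all chains in $\RR^k_+$ yields \eqref{eq:splitting}. I expect the main obstacle to be the second step, namely showing that the genuinely $r$-dimensional long variation $V^2(a_{s(n)}:n\in\ZZ^r)$ still controls the long part of a $k$-parameter chain when $r<k$; the buffered telescoping together with a careful allocation of the mixed residual terms to the short variations is what overcomes it. A secondary, purely technical point is enforcing strict positivity in the lifting step, handled by the perturbation just described.
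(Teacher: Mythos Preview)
Your approach differs substantially from the paper's, and the difference is instructive. The paper never leaves $\RR^k_+$: given a chain $I_0\prec\cdots\prec I_J$ it simply reads off the dyadic box indices $n(j)\in\ZZ^r$ of $\PPP^r(I_j)$, then for each maximal block with constant $n(j)=n$ it telescopes through the anchor $s(n)$ and the corner ${\rm left}(Q_n)$. The lift of the block into $Q_n$ is obtained by choosing (via \eqref{eq:Q1}) a lift $I^*_i$ of the first point and then taking $I^*_j\coloneqq I^*_i\otimes I_i^{-1}\otimes I_j$ for the remaining points of the block --- that is, one keeps the \emph{original} increments $I_i^{-1}\otimes I_j$ of the given chain in $\RR^k_+$. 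The single line \eqref{eq:Q2} (no zero columns) then bounds $|I_i^{-1}\otimes I_j|_\infty\le 2^{N_0}$, so the shifted chain lands in $Q_n$. This replaces your entire LP/compactness lifting in step~3: there is no need to manufacture small nonnegative preimages of the $\delta^{(i)}$, because the chain already provides them and \eqref{eq:Q2} certifies they are small. Your step~3 is correct in spirit but considerably more elaborate than what is needed.

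The real concern is your step~2. You propose to iterate the one-parameter inequality \eqref{eq:6} coordinate by coordinate, but after processing the first coordinate the residual terms involve evaluations of $b$ at points whose first coordinate is dyadic while the remaining coordinates are the \emph{original} $t^{(j)}_\ell$'s --- these are no longer arranged along a single chain in $\RR^{r-1}_+$, and it is unclear how to feed them into a second application of \eqref{eq:6}. Your remark that the ``purely-dyadic contributions reassemble into strictly increasing chains in $\DD^r$'' via buffering at right endpoints is the heart of the matter, but it is asserted rather than proved; in particular, the box indices $n(j)$ of a strictly increasing chain in $\RR^r_+$ need only satisfy $n(j-1)\preceq n(j)$, $n(j-1)\neq n(j)$, and do not in general form a strictly $\prec$-increasing sequence in $\ZZ^r$, so some genuine argument is required here. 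The paper sidesteps this iteration entirely: its long-variation contribution is $\sum_i|a_{s(n(i))}-a_{s(n(i-1))}|^2$ over the transition indices, and its short-variation contribution picks up each $Q_n$ at most a bounded number of times because the weakly increasing sequence $(n(j))$ visits each box in a single consecutive block. This direct telescoping is both shorter and avoids the coordinate-by-coordinate bookkeeping you anticipate as the main obstacle.
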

\begin{proof}
	Fix $J \in \ZZ_+$ and let $(I_i : i \in \{0\} \cup [J]) \in \mathfrak S_J(\RR^k_+)$. Our goal is to estimate
	\begin{equation}\label{eq:seq}
	\Big( \sum_{j \in [J]} \big| a_{I_{j}} - a_{I_{j-1}} \big|^2 \Big)^{\frac{1}{2}}.
	\end{equation}
	For each $j \in \{ 0 \} \cup [J]$ we let $n(j)$ be the unique parameter $n \in \ZZ^r$ for which $\PPP^r (I_j) \in R(n)$, where $R(n) \coloneqq  \{ \tau \in \RR^r_+ : {\bf 2}^n \preceq \tau \prec {\bf 2}^{n + {\bf 1}}\}$. Observe that $n(0) \preceq \cdots \preceq n(J)$. 
	Suppose that $n(i-1) \neq n(i) = \cdots = n(i') \neq n(i'+1)$ holds for some $i,i' \in [J-1]$ with $i \leq i'$. Then for $j \in \{i-1, i, i'+1\}$, since $\PPP^r (I_j) \in R(n(j))=\PPP^r (s(n(j)))\otimes[1, 2]^r$, we can use \eqref{eq:Q1} to find 
	\[
	I^*_j \in \big\{s \in \RR^k_+ : 2^{- N_0} \odot s(n(j)) \preceq s \preceq 2^{N_0} \odot s(n(j))\big\} \subseteq Q_{n(j)}
	\] 
	such that $\PPP^r(I^*_j) = \PPP^r(I_j)$ and, consequently, $a_{I^*_j} = a_{I_j}$. Next, for all $j \in [i'] \setminus [i]$ we set $I^*_j = I^*_i \otimes I_i^{-1} \otimes I_j$. With this choice we have $\PPP^r(I^*_j) = \PPP^r(I_j)$ and $I^*_i \prec \cdots \prec I^*_{i'}$. Moreover, $\PPP^r(I_i^{-1} \otimes I_j)\in[1, 2]^r$ thus by \eqref{eq:Q2}
we obtain $|I_i^{-1} \otimes I_j|_{\infty}\le 2^{N_0}$. Then  it is not difficult to see that        the conditions $n(i) = n(i')$ and $2^{- N_0} \odot s(n(i)) \preceq I^*_i  \preceq 2^{N_0} \odot s(n(i))$ imply $\{I_i^*, \dots, I^*_{i'} \} \subseteq Q_{n(i)}$. Thus, we have
	\begin{align*}
	\big|a_{I_i} - a_{I_{i-1}}\big|^2 &= \big|a_{I^*_i} - a_{I^*_{i-1}}\big|^2 
    \\
	&\lesssim \big| a_{s(n(i))} - a_{s(n(i-1))} \big|^2 + \sum_{j \in \{i-1,i\}} \big| a_{s(n(j))} - a_{{\rm left}(Q_{n(j)})} \big|^2 + \big| a_{I^*_j} - a_{{\rm left}(Q_{n(j)})}\big|^2 \\
	&\lesssim \big| a_{s(n(i))} - a_{s(n(i-1))} \big|^2 + \sum_{j \in \{i-1,i\}} \big( V^2(a_{s} : s \in Q_{n(j)} ) \big)^2,
	\end{align*}
	where ${\rm left}(Q_{n(j)})\coloneqq  2^{-2N_0} \odot s(n(j))$ is the ``bottom left'' corner of $Q_{n(j)}$. Similarly,
	\[ 
	\big| a_{I_{i'+1}} - a_{I_{i'}} \big|^2 =
	\big| a_{I^*_{i'+1}} - a_{I^*_{i'}} \big|^2 \lesssim \big| a_{s(n(i'+1))} - a_{s(n(i'))} \big|^2 + \sum_{j \in\{i',i'+1\} } \big( V^2 (a_{s} : s\in Q_{n(j)} ) \big)^2.
	\]
	Finally, we also have
	\[
	\sum_{j \in [i'] \setminus [i]} \big| a_{I_j} - a_{I_{j-1}} \big|^2 = \sum_{j \in [i'] \setminus [i]} \big| a_{I^*_j} - a_{I^*_{j-1}} \big|^2 \leq \big( V^2 (a_{s} : s\in Q_{n(i)}) \big)^2.
	\]
	Taking into account the inequalities above we see that \eqref{eq:seq} is controlled by the right hand side of \eqref{eq:splitting}. Since the implied constant does not depend on $J \in \ZZ_+$ and $I_0 \prec \cdots \prec I_J$, we conclude the claim.  
\end{proof}

\noindent Applying Lemma~\ref{lem:splitting} we obtain that the
left hand side of \eqref{eq:sigma} is dominated uniformly by
\[
\big\| V^2\big(T_{\RR^d}[\tilde{\mathfrak m}^\PPP_{s(n)}] f : n \in \ZZ^r \big)
\big\|_{L^p(\RR^{d})} + \Big\| \Big(
\sum_{n \in \ZZ^r}  V^2\big(T_{\RR^d}[\tilde{\mathfrak m}^\PPP_{s}] f : s \in Q_n \big)^2 \Big)^{\frac{1}{2}}
\Big\|_{L^p(\RR^{d})}.
\]

\subsection{Littlewood--Paley operators} Having obtained  condition \ref{cond:abs3}, we now prove \ref{cond:abs5} for suitably chosen operators $\calS_n$, $n \in \ZZ^r$, adapted to the cubes $Q_n$ defined in \eqref{eq:Qn}. 
For $n \in \ZZ^r$ let $\calS_n$ be the Littlewood--Paley projection corresponding to the cube $R(-n)$, that is, we set $\calS_n f \coloneqq   T_{\RR^d} [\Xi_n] f$, where 
\[
\Xi_n ( \xi_1, \dots, \xi_d) \coloneqq  \ind{R(-n)} (|\xi_{i_1}|, \dots, |\xi_{i_r}|).
\]
Note that the family $\{\calS_n : n \in \ZZ^r\}$ satisfies \ref{cond:abs5}. Indeed, \eqref{eq:PL1} and \eqref{eq:PL2} with any $p \in (1,2]$ follow from the general Littlewood--Paley theory. Moreover, in view of \eqref{cond:2}, for each $n, n' \in \ZZ^r$ and $\xi \in \RR^d$ such that $(|\xi_{i_1}|, \dots, |\xi_{i_r}|) \in R(-n-n')$ we have
\[
|\tilde{\mathfrak m}_{s(n)}^\PPP(\xi) | \lesssim \min \big\{ |2^{n_1} \xi_{i_1}|^{\pm \delta}, \dots, |2^{n_r} \xi_{i_r}|^{\pm  \delta} \big\} \lesssim 2^{- \frac{ \delta |n'|_1}{r}}. 
\]
Referring to the homogeneity properties of $\PPP$ we deduce that $s(n)$ can be replaced by any other element of $Q_n$, that is,  
$
| \tilde{\mathfrak m}_{s_n}^\PPP(\xi)  | \lesssim 2^{- \frac{ \delta |n'|_1}{r}}
$
holds for each $s_n \in Q_n$ and $\xi \in \RR^d$ such that $(|\xi_{i_1}|, \dots, |\xi_{i_r}|) \in R(-n-n')$ (we allow the implicit constant to depend on $\PPP$ and $N_0$). Consequently, taking $c_{n'} = C 2^{- \frac{\delta |n'|_1}{r}}$ with some large $C \in \RR_+$, we conclude that \eqref{eq:off-diag} and \eqref{eq:sum-aj} are satisfied as well. Thus, condition \ref{cond:abs5} is justified. 

\subsection{Verification of the remaining conditions in Theorem~\ref{thm:2}} To prove \eqref{eq:sigma}, we need to show that the family $\{T_{\RR^d}[\tilde{\mathfrak m}_{s}^\PPP] : s \in \RR_+^k \}$ satisfies the assumptions of Theorem~\ref{thm:2}.
We set $p_0 = 1$, $p_1 = p$, and $\calH_s = T_{\RR^d}[\tilde{\mathfrak m}_{s}^\PPP]$ and focus temporarily on the case $p \in (1,2]$. We have to verify conditions \ref{cond:abs1}--\ref{cond:abs6}. In the proof we will invoke induction hypothesis
if $d > 1$.  Thus, we assume that $d=1$ or that $d > 1$ and for each family of Radon operators acting on $d_0$-dimensional space with $d_0 < d$ the conclusion  of Theorem~\ref{thm:3} is satisfied.


Conditions \ref{cond:abs3} and \ref{cond:abs5} have already been proved, while \ref{cond:abs4} follows from the dominated convergence theorem. It remains to verify \ref{cond:abs1} and \ref{cond:abs6}. 

In order to justify \ref{cond:abs1}, we  show that the following maximal operator
\[
\calH_{*}f \coloneqq   \sup_{s \in \RR_+^k} \sup_{\abs{g} \leq \abs{f}} \abs{T_{\RR^d}[\tilde{\mathfrak m}_{s}^\PPP] g}
\]
is bounded on $L^p(\RR^d)$. We can assume that $f\geq 0$, since $\calH_{*}f=\calH_{*}|f|$. Observe that for $N \in \NN$ we have
\begin{equation}\label{eq:sigmaestimate}
\calH_{*,N}f \coloneqq   \sup_{s \in Q_{\le N}} \sup_{\abs{g} \leq \abs{f}} \abs{T_{\RR^d}[\tilde{\mathfrak m}_{s}^\PPP] g} \leq \sum_{D \subseteq [d] : D \neq \emptyset} \sup_{s \in Q_{\le N}} T_{\RR^d}[\mathfrak m_{s}^{\PPP,D}] f + \sup_{s \in Q_{\le N}}T_{\RR^d}[\mathfrak m_{s}^\PPP] f,
\end{equation}
where $Q_{\le N}\coloneqq \bigcup_{|n|_{\infty}\le N}Q_n$  and $Q_n$ has been defined in \eqref{eq:Qn}. Note that both $T_{\RR^d}[\mathfrak m_{s}^{\PPP,D}]$ and  $T_{\RR^d}[\mathfrak m_{s}^\PPP]$ are positive operators.
The last term in \eqref{eq:sigmaestimate} is uniformly controlled by
\[
\sup_{|n|_\infty \leq N} T_{\RR^d}[\mathfrak m_{{\rm right}(Q_n)}^\PPP] f \leq \sum_{D \subseteq [d] : D \neq \emptyset} \sup_{s \in \RR^k_+} T_{\RR^d}[\mathfrak m_{s}^{\PPP,D}] f + \Big( \sum_{|n|_\infty \leq N} \big| T_{\RR^d}[\tilde{\mathfrak m}_{{\rm right}(Q_n)}^\PPP] f \big|^2\Big)^{\frac{1}{2}}.
\]
Fix a nonempty $D \subseteq [d]$. We claim that for every $p\in(1, \infty)$ the following estimate holds
\begin{align}
\label{eq:7}
\big\| \sup_{s \in \RR_+^k} T_{\RR^d}[\mathfrak m_{s}^{\PPP,D}] f \big\|_{L^p(\RR^{d})} \lesssim_{p, \PPP} \| f \|_{L^p(\RR^{d})}.
\end{align}
Indeed, inequality \eqref{eq:7} for $D=[d]$ follows from \eqref{thm:9} (alternatively, we can use the $L^p$ boundedness of the Hardy--Littlewood maximal operator). We are left with the case $d > 1$ and $\emptyset \neq D \subseteq [d]$. By symmetry we can assume that $D = [d-d_0]$ for some $d_0 \in [d-1]$. Then
\[
\sup_{s \in \RR_+^k}T_{\RR^d}[\mathfrak m_{s}^{\PPP,D}] f \lesssim \calM_{\ast}^{(1)}\cdots \calM_{\ast}^{(d-d_0)} \sup_{s \in \RR_+^k} T_{\RR^d}[\mathfrak m_{s}^{\PPP}\circ \pi^D]f,
\]
yields the claim in \eqref{eq:7} by using Lemma~\ref{lem:2m} combined with  $L^p$ boundedness of the Hardy--Littlewood maximal operators and the inductive hypothesis
ensuring $L^p$ boundedness of the corresponding maximal Radon operators acting on $\RR^{d_0}$ (since the oscillation seminorm dominates the maximal one, see \eqref{eq:domin}).

Now we apply Lemma~\ref{lem:duo+rubio:squarem} with $k_0=1$, $\mathcal V = 1$, $q_0 = 1$, $q_1 = p$, $\calL_{n,0} = T_{\RR^d}[\tilde{\mathfrak m}_{{\rm right}(Q_n)}^\PPP]$ for $|n|_{\infty} \leq N$ and $\calL_{n,0} \equiv 0$ otherwise, and $\calS_n$ introduced before, to obtain 
\[
\Big\| \Big( \sum_{|n|_\infty \leq N} \big| T_{\RR^d}[\tilde{\mathfrak m}_{{\rm right}(Q_n)}^\PPP] f \big|^2\Big)^{\frac{1}{2}} \Big\|_{L^p(\RR^{d})} \lesssim_{p, \PPP} \| \calH_{*, N} \|_{L^p(\RR^{d}) \to L^p(\RR^{d})}^{\frac{2-p}{p}} \| f \|_{L^p(\RR^{d})},
\]
where \ref{cond:abs5} has been used to justify $\sum_{n \in \NN^r} \tilde{c}_n^{\ p-1} \lesssim_{p,\PPP} 1$. Consequently,
\[
\| \calH_{*, N} \|_{L^p(\RR^{d}) \to L^p(\RR^{d})} \lesssim_{p,\PPP} 1 + \| \calH_{*, N} \|_{L^p(\RR^{d}) \to L^p(\RR^{d})}^{\frac{2-p}{p}}
\]
and, since the implicit constant does not depend on $N$, we conclude that $\calH_{*}$ is bounded on $L^p(\RR^d)$ by the monotone convergence theorem. As a consequence of the previous arguments  the maximal operator corresponding to the family $\{ T_{\RR^d}[\mathfrak m_{s}^\PPP] : s \in \RR^k_+ \}$ is bounded on $L^p(\RR^d)$ for $p \in (1, 2]$.
Arguing as in \eqref{eq:sigmaestimate} and using simple interpolation for the maximal operators associated with the families $\{ T_{\RR^d}[\mathfrak m_{s}^\PPP] : s \in \RR^k_+ \}$ and $\{ T_{\RR^d}[\mathfrak m_{s}^{\PPP,D}] : s \in \RR^k_+ \}$ we deduce 
$L^p$ boundedness of $\calH_{*}$ for all $p \in (1, \infty)$, which gives \ref{cond:abs1}.

\medskip

Finally, we show that condition \ref{cond:abs6} holds. To see this we need some auxiliary results.

\begin{lemma}\label{lem:tensor}
	Given $d_1, d_2 \in \ZZ_+$ let $\{\calH_s^{(1)} : s \in \RR^k_+\}$ and $\{\calH_s^{(2)} : s \in \RR^k_+\}$ be two families of operators on $\RR^{d_1}$ and $\RR^{d_2}$, respectively, which satisfy condition \ref{cond:abs6}. Then the family $\{\calH_s^{(1)} \otimes \calH_s^{(2)} : s \in \RR^k_+\}$ of tensor products of operators on $\RR^{d_1+d_2}$ also satisfies \ref{cond:abs6} (the implicit constant may depend on $k$). 
\end{lemma}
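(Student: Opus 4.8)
The argument will rest on a Leibniz-type expansion of the finite difference $\Delta^K_h$ applied to a tensor product, combined with the multiplicativity of $L^p$ operator norms under tensor products of operators acting on disjoint blocks of variables. Fix ${\bf g}\in\ZZ^r$, the cube $Q_{\bf g}$ from \ref{cond:abs3}, and points $s,s+h\in Q_{\bf g}$ with $h\in\RR^k_+$; we must bound $\big\|\Delta^{K}_h\big(\calH^{(1)}_s\otimes\calH^{(2)}_s\big)\big\|_{L^p\to L^p}$ on $L^p(\RR^{d_1+d_2})$ by $\prod_{i\in K}\frac{h_i}{s_i}$ for $p\in\{p_0,2\}$ and every $K\subseteq[k]$. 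The starting point is the elementary telescoping identity
\[
A_{s+e_j\otimes h}\otimes B_{s+e_j\otimes h}-A_s\otimes B_s=\big(\Delta^{\{j\}}_h A_s\big)\otimes\big({\rm T}^{\{j\}}_h B_s\big)+A_s\otimes\big(\Delta^{\{j\}}_h B_s\big),
\]
valid for any two families $\{A_s\}$, $\{B_s\}$ of operators. Iterating it over the coordinates of $K$, and using that the shifts ${\rm T}^{\{j\}}_h$ commute with the differences $\Delta^{\{j'\}}_h$, an induction on $|K|$ produces the expansion
\[
\Delta^{K}_h\big(\calH^{(1)}_s\otimes\calH^{(2)}_s\big)=\sum_{K_1\sqcup K_2=K}\big(\Delta^{K_1}_h\calH^{(1)}_s\big)\otimes\big(\Delta^{K_2}_h\,{\rm T}^{K_1}_h\calH^{(2)}_s\big),
\]
the sum ranging over the at most $2^k$ ordered partitions of $K$ into two disjoint sets. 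Observe that ${\rm T}^{K_1}_h\calH^{(2)}_s=\calH^{(2)}_{s'}$ with $s'\coloneqq s+\sum_{j\in K_1}e_j\otimes h$; since $s\preceq s'\preceq s+h$ we have $s'\in Q_{\bf g}$, and $s'_i=s_i$ for every $i\notin K_1$, in particular for $i\in K_2$.

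I will then estimate a single summand. Because $\calH^{(1)}$ acts only on the first $d_1$ variables and $\calH^{(2)}$ only on the last $d_2$, Fubini's theorem yields, for $p\in[1,\infty)$,
\[
\big\|\big(\Delta^{K_1}_h\calH^{(1)}_s\big)\otimes\big(\Delta^{K_2}_h\calH^{(2)}_{s'}\big)\big\|_{L^p\to L^p}\le\big\|\Delta^{K_1}_h\calH^{(1)}_s\big\|_{L^p(\RR^{d_1})\to L^p(\RR^{d_1})}\,\big\|\Delta^{K_2}_h\calH^{(2)}_{s'}\big\|_{L^p(\RR^{d_2})\to L^p(\RR^{d_2})}.
\]
For the first factor, condition \ref{cond:abs6} for $\{\calH^{(1)}_s\}$, applied with the set $K_1$ and the points $s,s+h\in Q_{\bf g}$, gives $\big\|\Delta^{K_1}_h\calH^{(1)}_s\big\|\lesssim\prod_{i\in K_1}\frac{h_i}{s_i}$ for $p\in\{p_0,2\}$. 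For the second factor we use that $\Delta^{K_2}_h$ depends only on the coordinates $h_i$ with $i\in K_2$, so that $h$ may be restricted to $K_2$ when invoking \ref{cond:abs6}; since both $s'$ and $s'+\sum_{j\in K_2}e_j\otimes h=s+\sum_{j\in K}e_j\otimes h$ lie in $Q_{\bf g}$, condition \ref{cond:abs6} for $\{\calH^{(2)}_s\}$ gives $\big\|\Delta^{K_2}_h\calH^{(2)}_{s'}\big\|\lesssim\prod_{i\in K_2}\frac{h_i}{s'_i}=\prod_{i\in K_2}\frac{h_i}{s_i}$, where the last equality uses $s'_i=s_i$ for $i\in K_2$. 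Multiplying the two estimates and using $K_1\cap K_2=\emptyset$ bounds the summand by $\prod_{i\in K}\frac{h_i}{s_i}$.

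Summing this over the at most $2^k$ partitions then yields $\big\|\Delta^{K}_h\big(\calH^{(1)}_s\otimes\calH^{(2)}_s\big)\big\|_{L^p\to L^p}\lesssim_k\prod_{i\in K}\frac{h_i}{s_i}$ for $p\in\{p_0,2\}$ and all $s,s+h\in Q_{\bf g}$, which is precisely condition \ref{cond:abs6} for the tensor family, with an implicit constant depending only on $k$ and on the constants in the hypotheses. The step needing the most care is the combinatorial bookkeeping in the two expansions above: one must check that the scales generated by the shift operators ${\rm T}^{K_1}_h$ remain inside $Q_{\bf g}$, and that, when \ref{cond:abs6} is applied to the shifted second tensor factor, $h$ can be replaced by its restriction to the relevant index set — this is what guarantees that no spurious factor $h_i/s_i$ with $i\notin K$ appears and keeps the final constant under control. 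Apart from this, the proof is a direct combination of \ref{cond:abs6} for the two given families with the tensor product structure.
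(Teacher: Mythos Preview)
Your proof is correct and follows essentially the same approach as the paper: both use the Leibniz-type expansion
\[
\Delta^{K}_h\big(\calH^{(1)}_s\otimes\calH^{(2)}_s\big)=\sum_{U\subseteq K}\Delta^{U}_h\calH^{(1)}_s\otimes{\rm T}^{U}_h\Delta^{K\setminus U}_h\calH^{(2)}_s
\]
and then bound each summand via the tensor structure and condition \ref{cond:abs6} for the two factors. Your write-up is in fact more careful than the paper's, since you explicitly verify that the shifted base point $s'$ stays in $Q_{\bf g}$ and that $s'_i=s_i$ for $i\in K_2$, which is precisely what makes the factor $\prod_{i\in K_2}h_i/s'_i$ equal to $\prod_{i\in K_2}h_i/s_i$.
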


\begin{proof}
	Fix $K \subseteq [k]$ and take $s, h \in \RR^k_+$ as in the statement of \ref{cond:abs6}. We decompose
	\[
	\Delta^{K}_h\big(\calH_{s}^{(1)} \otimes \calH_{s}^{(2)}\big)
	= \sum_{U \subseteq K} \Delta^{U}_h\calH_{s}^{(1)}\otimes \mathrm{T}_h^U\Delta^{K\setminus U}_h\calH_{s}^{(2)}
	\]
	and observe that each term has $L^2(\RR^{d_1 + d_2})$ operator norm  controlled by
	$
\prod_{i \in U} \frac{h_i}{s_i} \cdot \prod_{i \in K \setminus U} \frac{h_i}{s_i}  = \prod_{i \in K} \frac{h_i}{s_i}.
	$
	Thus, the family $\{\calH_s^{(1)} \otimes \calH_s^{(2)} : s \in \RR^k_+\}$ satisfies \ref{cond:abs6}.
\end{proof}

\begin{lemma}\label{lem:bump}
	Let $P:\RR^k_+\rightarrow \RR_+$ be a monomial of $k$ parameters whose coefficient is $1$. For $s \in \RR^k_+$ let $\calH^P_s f \coloneqq  \sigma^P_s \ast f$, where $\sigma^P_s$ is the measure on $\RR$ with the density $\rho_s(x) \coloneqq \frac{1}{P(s)} \Phi(\frac{x}{P(s)})$. Then the family $\{\calH^P_s : s \in \RR^k_+\}$ satisfies \ref{cond:abs6}.
\end{lemma}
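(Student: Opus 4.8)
We plan to reduce the bound for every relevant exponent $p$ to a single estimate for the $L^1(\RR)$ norm of a convolution kernel, and then exploit the exact scaling behaviour of the bump under dilations. Observe first that $\calH^P_s$ is the operator of convolution on $\RR$ with $\varphi_{P(s)}$, where $\varphi_\lambda(x)\coloneqq\lambda^{-1}\Phi(x/\lambda)$ for $\lambda\in\RR_+$ and $x\in\RR$. Fix $K\subseteq[k]$ and $s,h\in\RR^k_+$. Unwinding the definition of $\Delta^K_h$, we see that $\Delta^K_h\calH^P_s$ is convolution with the kernel
\[
\kappa\coloneqq\sum_{U\subseteq K}(-1)^{|K\setminus U|}\varphi_{\lambda_U},\qquad \lambda_U\coloneqq P\Big(s+\sum_{i\in U}e_i\otimes h\Big).
\]
By Young's convolution inequality one has $\|\Delta^K_h\calH^P_s\|_{L^p(\RR)\to L^p(\RR)}\le\|\kappa\|_{L^1(\RR)}$ for every $p\in[1,\infty]$, so it suffices to bound $\|\kappa\|_{L^1(\RR)}$ by $\prod_{i\in K}h_i/s_i$ up to a multiplicative constant depending only on $k$ and $\deg P$; this covers in particular the cases $p\in\{1,2\}$ required by \ref{cond:abs6}. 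Writing $P(s)=\prod_{j\in[k]}s_j^{\alpha_j}$ with $\alpha_j\in\NN$, if $\alpha_i=0$ for some $i\in K$ then $\Delta^{\{i\}}_h\calH^P_s=0$ and there is nothing to prove, while the case $K=\emptyset$ is trivial since then $\|\kappa\|_{L^1}=\|\Phi\|_{L^1}$. Hence we may assume $K\neq\emptyset$ and $1\le\alpha_i\le\deg P$ for every $i\in K$.

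Next, introduce the auxiliary smooth function $F\colon[0,1]^K\to\RR_+$ given by $F(t)\coloneqq\prod_{i\in K}(s_i+t_ih_i)^{\alpha_i}\cdot\prod_{j\in[k]\setminus K}s_j^{\alpha_j}$, so that $\lambda_U=F(\mathbf{1}_U)$, where $\mathbf{1}_U\in\{0,1\}^K$ is the indicator vector of $U$. For each fixed $x\in\RR$ the scalar map $t\mapsto\varphi_{F(t)}(x)$ is smooth on $[0,1]^K$, and iterating the fundamental theorem of calculus over the $K$ variables yields
\[
\kappa(x)=\sum_{U\subseteq K}(-1)^{|K\setminus U|}\varphi_{F(\mathbf{1}_U)}(x)=\int_{[0,1]^K}\Big(\prod_{i\in K}\partial_{t_i}\Big)\varphi_{F(t)}(x)\,\dif t,
\]
so that by Minkowski's integral inequality $\|\kappa\|_{L^1(\RR)}\le\sup_{t\in[0,1]^K}\big\|\big(\prod_{i\in K}\partial_{t_i}\big)\varphi_{F(t)}\big\|_{L^1(\RR)}$. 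Two elementary computations feed into the estimate of this integrand. First, $\partial_\lambda^m\varphi_\lambda(x)=\lambda^{-m-1}\psi_m(x/\lambda)$, where $\psi_0=\Phi$ and $\psi_{m+1}(y)=-(m+1)\psi_m(y)-y\psi_m'(y)$; each $\psi_m$ is Schwartz, hence $\|\partial_\lambda^m\varphi_\lambda\|_{L^1(\RR)}=\lambda^{-m}\|\psi_m\|_{L^1(\RR)}$. Second, since $F$ is a product of factors each depending on a single variable, for any $B\subseteq K$ one has $\big(\prod_{i\in B}\partial_{t_i}\big)F(t)=F(t)\prod_{i\in B}\frac{\alpha_ih_i}{s_i+t_ih_i}$, and therefore $\big|\big(\prod_{i\in B}\partial_{t_i}\big)F(t)\big|\le F(t)\prod_{i\in B}\frac{\alpha_ih_i}{s_i}$ for all $t\in[0,1]^K$.

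Applying now the multivariate first-order Faà di Bruno formula to the composition $t\mapsto\varphi_{F(t)}$,
\[
\Big(\prod_{i\in K}\partial_{t_i}\Big)\varphi_{F(t)}=\sum_{\pi}\big(\partial_\lambda^{|\pi|}\varphi_\lambda\big)\big|_{\lambda=F(t)}\prod_{B\in\pi}\Big(\prod_{i\in B}\partial_{t_i}\Big)F(t),
\]
the sum being over all set partitions $\pi$ of $K$, we take $L^1(\RR)$ norms and insert the two displays above: the factor $F(t)^{-|\pi|}$ arising from $\|\partial_\lambda^{|\pi|}\varphi_\lambda\|_{L^1}$ exactly cancels $\prod_{B\in\pi}F(t)=F(t)^{|\pi|}$, and since $\pi$ is a partition of $K$ we have $\prod_{B\in\pi}\prod_{i\in B}\frac{\alpha_ih_i}{s_i}=\prod_{i\in K}\frac{\alpha_ih_i}{s_i}$. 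Hence
\[
\Big\|\Big(\prod_{i\in K}\partial_{t_i}\Big)\varphi_{F(t)}\Big\|_{L^1(\RR)}\le\Big(\sum_{\pi}\|\psi_{|\pi|}\|_{L^1(\RR)}\Big)\prod_{i\in K}\frac{\alpha_ih_i}{s_i}
\]
uniformly in $t\in[0,1]^K$. Since $|K|\le k$, each $\alpha_i\le\deg P$, and both the number of partitions of $K$ and the norms $\|\psi_m\|_{L^1(\RR)}$ for $m\le k$ depend only on $k$ and $\deg P$, we conclude $\|\kappa\|_{L^1(\RR)}\lesssim_{k,\deg P}\prod_{i\in K}h_i/s_i$, which is precisely the estimate in \ref{cond:abs6}.

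The essential (and nearly the only) idea is that passing to the kernel bound via Young's inequality lets us avoid Fourier analysis entirely, and that the dilation structure of the bump, encoded in $\partial_\lambda^m\varphi_\lambda=\lambda^{-m-1}\psi_m(\cdot/\lambda)$ with $\psi_m$ Schwartz, supplies exactly the negative power of $F(t)$ needed to cancel the factor $F(t)^{|\pi|}$ produced by the chain rule — without this cancellation the naive estimate would grow with $F(t)$, so this is the only point where anything nontrivial happens. The rest is routine bookkeeping of the Faà di Bruno sum over partitions, together with the trivial degenerate cases $K=\emptyset$ and $\alpha_i=0$. I note in passing that the argument works for arbitrary $s,h\in\RR^k_+$ — the hypothesis that $s$ and $s+h$ lie in a common cube $Q_{\mathbf g}$ is not used here — and that no property of $\Phi$ beyond Schwartz decay is needed.
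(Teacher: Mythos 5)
Your proof is correct. You and the paper start the same way: Young's inequality reduces \ref{cond:abs6} to an $L^1(\RR)$ bound on the iterated difference $\Delta^K_h\rho_s$ of the kernel, and both arguments then convert that finite difference into an iterated derivative via the fundamental theorem of calculus. The implementations diverge from there. The paper first normalizes $s=\bm 1$ by homogeneity and then linearizes the scale by passing to $y=\ln P(\cdot)$, so the increments become additive, $\tilde h_i=\alpha_i\ln(1+h_i)$; the iterated difference of $\Theta_x(y)=e^{-y}\Phi(x/e^y)$ over the box $\prod_{i\in K}[0,\tilde h_i]$ is then bounded by $\prod_{i\in K}\tilde h_i$ times $\max\vert\Theta_x^{(|K|)}(y)\vert$ over a compact interval $[0,k\ln C_{P,N_0}]$ --- and it is exactly here that the hypothesis $s,s+h\in Q_{\mathbf g}$ enters, to keep that interval bounded. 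You instead keep the multiplicative structure of the scale $F(t)$ and apply the set-partition Fa\`a di Bruno formula, letting the exact scaling identity $\Vert\partial_\lambda^m\varphi_\lambda\Vert_{L^1}=\lambda^{-m}\Vert\psi_m\Vert_{L^1}$ cancel the factor $F(t)^{|\pi|}$ produced by the chain rule, while $s_i+t_ih_i\ge s_i$ supplies the quotient $h_i/s_i$. What your version buys is a bound that is uniform in $h/s$ (your closing remark that the cube constraint is never used is accurate) and valid for all $p\in[1,\infty]$, slightly more than the lemma requires; the cost is the partition bookkeeping that the paper's logarithmic substitution avoids. Both routes yield constants depending only on $k$, $\deg P$ and $\Phi$, which is all that is needed downstream.
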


\begin{proof}
Fix $\emptyset \neq K \subseteq [k]$ (the case $K = \emptyset$ is obvious) and $s, h \in \RR^k_+$ as in \ref{cond:abs6}. 
 By Young's inequality,
	\begin{equation*}
	\| \Delta^K_h \calH^P_{s} \|_{L^p(\mathfrak{\RR}) \rightarrow L^p(\mathfrak{\RR})}\leq \|\Delta^K_h \rho_s( \cdot) \|_{L^1(\RR)}.
	\end{equation*}
	By homogeneity it suffices to get the bound $C\prod_{i \in K} h_i$ for $s = {\bf 1}$. 
 Let $a_s \coloneqq P(s)$ and
 note that
	$
	\mathrm{T}^{K'}_h a_s = \prod_{i \in K'} (1+h_i)^{\alpha_i} \leq C_{P,N_0}
	$
	with some $C_{P,N_0} \in (1,\infty)$ and $\alpha_1, \dots, \alpha_k \in \ZZ_+$ depending on $P$. Thus,
	\begin{equation*}
	\Delta^K_h \rho_s(x) = \sum_{K' \subset K} (-1)^{|K'|} \frac{1}{\mathrm{T}^{K'}_h a_s} \Phi\Big(\frac{x}{\mathrm{T}^{K'}_h a_s}\Big) = \sum_{K' \subset K} (-1)^{|K'|} \Theta_x \big( \ln (\mathrm{T}^{K'}_h a_s ) \big),
	\end{equation*}
    where
	$
	\Theta_x(y) \coloneqq \frac{1}{e^y} \Phi(\frac{x}{e^y})$.
	Denote $\tilde{h}_i \coloneqq \alpha_i \ln(1+h_i)$. By applying the Newton--Leibniz formula $|K|$ times,
	\begin{equation*}
	\Big| \sum_{K' \subset K} (-1)^{|K'|} \Theta_x\Big( \sum_{i \in K'} \tilde{h}_i\Big) \Big| \leq \prod_{i \in K} \tilde{h}_i \cdot \max\big\{|\Theta_x^{(|K|)}(y)| : 0 \leq y \leq k \ln(C_{P,N_0}) \big\},
	\end{equation*}
	where $\Theta_x^{(|K|)}$ is the derivative of order $|K|$ of $\Theta$. Since $\prod_{i \in K} \tilde{h}_i \lesssim_{P,N_0} \prod_{i \in K} h_i$ and the maximum above is controlled by $(1+x^2)^{-1}$, the desired estimate holds.
\end{proof}


\noindent It is straightforward to show that \ref{cond:abs6} holds for families of Radon operators. Thus, applying Lemma~\ref{lem:tensor} and Lemma~\ref{lem:bump} several times we deduce that the family $\{T_{\RR^d}[\tilde{\mathfrak m}_{s}^\PPP] : s \in \RR_+^k \}$ satisfies \ref{cond:abs6} as well.

We have just proved \eqref{eq:sigma} for each $p \in (1,2]$. Since the dual conditions specified in Theorem~\ref{thm:2} are naturally satisfied, we conclude that \eqref{eq:sigma} holds for all $p \in (1, \infty)$ as claimed.  

\subsection{Final inductive argument} Since the case $d=1$ is now complete, in order to finish the proof of Theorem~\ref{thm:3} it remains to prove the following result, which enables the inductive step.

\begin{proposition}
	Fix $p \in (1, \infty)$, $d, k \in \ZZ_+$, and let $\{\calM^\PPP_s : s\in\RR_+^k\}$ be the family of Radon operators associated with some fixed $d \times k$ matrix $\mathbb M$ which has no zero columns and rows. Assume that for each $(d_0, k_0) \in \ZZ_+^2$ such that $k_0 \leq k$ and $d_0 < d$ the following holds. Given $\mathbb M_0$, an arbitrary $d_0 \times k_0$ matrix with entries from $\NN$ which has no zero columns and rows, the associated family $\{\calM^{\PPP_0}_s : s\in\RR_+^{k_0}\}$ satisfies
	\begin{equation}\label{eq:tildeP}
	\sup_{I \in \mathfrak S(\RR^k_+)} \big\| O^2_I\big(\calM^{\PPP_0}_s f : s\in\RR^k_+\big) \big\|_{L^p(\RR^{d_0})}\lesssim_{p,\PPP_0} \| f\|_{L^p(\RR^{d_0})}, \qquad f\in L^p(\RR^{d_0}).
	\end{equation}
	 Then the following oscillation inequality holds
	\[
	\sup_{I \in \mathfrak S(\RR^k_+)} \big\| O^2_I\big(\calM^\PPP_s f : s\in\RR^k_+\big) \big\|_{L^p(\RR^{d}) }\lesssim_{p,\PPP} \| f\|_{L^p(\RR^{d})}, \qquad f\in L^p(\RR^d).
	\]
\end{proposition}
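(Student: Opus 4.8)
The plan is to prove the oscillation inequality of the Proposition by means of the multiplier decomposition \eqref{eq:mudecomp},
\[
\mathfrak m^\PPP_s = \sum_{\emptyset \neq D \subseteq [d]} (-1)^{|D|+1}\, \mathfrak m^{\PPP, D}_s + \tilde{\mathfrak m}^\PPP_s,
\]
estimating, uniformly in $I \in \mathfrak S(\RR^k_+)$, the oscillation of the Fourier multiplier operator attached to each summand (oscillation seminorms being subadditive). Two of the three contributions are already available. For the ``error'' term, inequality \eqref{eq:sigma} — established above by verifying the hypotheses of Theorem~\ref{thm:2} — gives $\| V^2(T_{\RR^d}[\tilde{\mathfrak m}^\PPP_s] f : s \in \RR^k_+) \|_{L^p(\RR^d)} \lesssim \|f\|_{L^p(\RR^d)}$, which dominates the oscillation. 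For $D = [d]$ the required bound is precisely \eqref{thm:9}. Hence the whole content of the inductive step lies in bounding, for each $D$ with $\emptyset \neq D \subsetneq [d]$, the quantity $\sup_I \| O^2_I(T_{\RR^d}[\mathfrak m^{\PPP, D}_s] f) \|_{L^p(\RR^d)}$, and it is here that the induction hypothesis \eqref{eq:tildeP} (for Radon operators in dimension $d - |D| < d$) will enter.

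Fix such a $D$. Using the definition of $\mathfrak m^{\PPP,D}_s$ we factor $T_{\RR^d}[\mathfrak m^{\PPP, D}_s] = \calU_s \circ \calH_s$, where $\calU_s \coloneqq T_{\RR^d}[\Upsilon^{\PPP, D}_s]$ is convolution with a nonnegative dilated Schwartz bump acting only on the coordinates indexed by $D$, and $\calH_s \coloneqq T_{\RR^d}[\mathfrak m^\PPP_s \circ \pi^D]$ is a Radon operator acting only on the $d - |D|$ coordinates indexed by $[d] \setminus D$; crucially, these two families commute. For $s \in \BB_j[I]$ we use the telescoping identity
\[
\calU_s \calH_s f - \calU_{I_{j-1}} \calH_{I_{j-1}} f = \calU_s(\calH_s f - \calH_{I_{j-1}} f) + (\calU_s - \calU_{I_{j-1}})(\calH_{I_{j-1}} f - \calH_s f) + \calH_s(\calU_s f - \calU_{I_{j-1}} f),
\]
where, using commutativity, the last summand has been arranged with $\calH_s$ on the outside, so that the oscillation of $\calU$ inside it is applied to the \emph{fixed} function $f$. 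Writing $\calM_D \coloneqq \prod_{i \in D} \calM_*^{(i)}$ for the product of the Hardy--Littlewood maximal operators over $D$, and $\calH_* g \coloneqq \sup_{s \in \RR^k_+} |\calH_s g|$ for the maximal operator of the Radon factor, we have $|\calU_s g| \leq C \calM_D(|g|)$ and $|\calH_s g| \leq \calH_*(|g|)$ uniformly in $s$, since $\calU_s$ is a positive convolution with a dilated Schwartz function and $\calH_s$ a positive convolution with a Radon averaging measure.

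Applying these bounds to the three summands, taking the supremum over $s \in \BB_j[I]$ and then the $\ell^2(j)$-norm, we obtain
\[
\big\| O^2_I\big(T_{\RR^d}[\mathfrak m^{\PPP, D}_s] f\big)\big\|_{L^p(\RR^d)} \lesssim \Big\| \Big( \sum_j \big| \calM_D\, g_j \big|^2 \Big)^{\frac12} \Big\|_{L^p(\RR^d)} + \Big\| \Big( \sum_j \big| \calH_*\, h_j \big|^2 \Big)^{\frac12} \Big\|_{L^p(\RR^d)},
\]
where $g_j \coloneqq \sup_{s \in \BB_j[I]} |\calH_s f - \calH_{I_{j-1}} f|$ and $h_j \coloneqq \sup_{s \in \BB_j[I]} |\calU_s f - \calU_{I_{j-1}} f|$, so that $(\sum_j g_j^2)^{\frac12} = O^2_I(\calH_s f)$ and $(\sum_j h_j^2)^{\frac12} = O^2_I(\calU_s f)$. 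By the Fefferman--Stein vector-valued inequality for $\calM_D$, and by the corresponding vector-valued inequality for the lower-dimensional maximal Radon operator $\calH_*$ (discussed below), the right-hand side is dominated by $\| O^2_I(\calH_s f) \|_{L^p(\RR^d)} + \| O^2_I(\calU_s f) \|_{L^p(\RR^d)}$, uniformly in $I$. The first term is controlled by the induction hypothesis: deleting the zero columns of the matrix formed by the rows $\{ R_i : i \in [d] \setminus D \}$ of $\mathbb M^\PPP$ — legitimate because $\mathbb M^\PPP$ has no zero rows, hence none of these monomials is constant — yields a matrix with no zero rows or columns and with $d - |D| < d$ rows, to which \eqref{eq:tildeP} applies, and Lemma~\ref{lem:2m} then transfers the resulting oscillation bound back to $\RR^d$. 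The second term is controlled by \eqref{thm:9} together with Lemma~\ref{lem:2m}. Summing over the $2^d - 2$ sets $D$ with $\emptyset \neq D \subsetneq [d]$ completes the proof.

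The main obstacle is this oscillation estimate for the composition $\calU_s \circ \calH_s$, and inside it the ``cross'' summand $\calH_s(\calU_s f - \calU_{I_{j-1}} f)$: the first two summands collapse under a single Hardy--Littlewood maximal operator together with the inductive oscillation bound for $\calH_s$, but the cross summand does not, and — even after moving $\calH_s$ to the outside by commutativity — it forces the use of a vector-valued (Fefferman--Stein-type) inequality for the lower-dimensional maximal Radon operator $\calH_*$, rather than merely its scalar $L^p$ bound (the latter being free from \eqref{eq:domin} and the induction hypothesis). This vector-valued bound can be extracted from the known weighted $L^p$ estimates for such maximal Radon operators, or, if preferred, simply added to the list of conclusions propagated through the induction. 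A secondary, purely bookkeeping, point is to track carefully which coordinates each of the two factors $\calU_s$ and $\calH_s$ acts on, so that the passage to lower dimension — the deletion of zero columns and the application of Lemma~\ref{lem:2m} — is valid.
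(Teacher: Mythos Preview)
Your proof is correct and follows the same overall scheme as the paper: decompose via \eqref{eq:mudecomp}, discharge $\tilde{\mathfrak m}^\PPP_s$ by \eqref{eq:sigma} and $D=[d]$ by \eqref{thm:9}, then for $\emptyset\neq D\subsetneq[d]$ split the product $\calU_s\calH_s$ by a telescoping identity and bound the pieces via Fefferman--Stein and the induction hypothesis through Lemma~\ref{lem:2m}.

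The genuine difference is in the telescoping chosen for the cross term. You keep the Radon factor at the \emph{running} scale $s$ and move it outside by commutativity, obtaining $\calH_s(\calU_s f-\calU_{I_{j-1}}f)$; after maximizing in $s$ this forces a vector-valued inequality for the \emph{sublinear} maximal operator $\calH_*$, which you then import from weighted theory or carry along the induction. The paper instead breaks $\Upsilon^{\PPP,D}_s-\Upsilon^{\PPP,D}_{s^\circ}$ into one-coordinate bump differences, each multiplied by the Radon factor at the \emph{frozen} scale $s^\circ=I_{j-1}$. After the $\ell^2(j)$ sum one then needs only a vector-valued bound for the \emph{linear} family $\{T_{\RR^d}[\mathfrak m^\PPP_{I_{j-1}}\circ\pi^D]:j\in\ZZ_+\}$, which Lemma~\ref{lem:duo+rubiom} delivers directly from the scalar maximal bound already available by induction. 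The paper's route is thus self-contained within its own toolkit, whereas yours relies on an external ingredient (weighted $L^p$ bounds for Radon maximal operators) that, while true, lies outside the paper.
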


\begin{proof}	
	In view of \eqref{eq:mudecomp} and \eqref{eq:sigma} it suffices to prove that for any nonempty $D \subseteq [d]$ we have
	\[
	\sup_{I \in \mathfrak S(\RR^k_+)} \big\| O^2_I\big(T_{\RR^d}[\mathfrak m^{\PPP,D}_s] f : s\in\RR_+^k\big)\big\|_{L^p(\RR^{d})}\lesssim_{p,\PPP} \|f\|_{L^p(\RR^{d})}.
	\]
	Since for $D = [d]$ the claim holds by \eqref{thm:9}, we can assume $d \geq 2$ and $D=[d]\setminus[d_0]$ for some $d_0 \in [d-1]$. Arguing as for bumps, for fixed $s^\circ \in \RR_+^k$ and an arbitrary set $\mathbb{S} \subseteq \RR_+^k$, we have
	\begin{align*}
	& \sup_{s \in \mathbb{S}} \, 
	\big\vert 
	T_{\RR^d} \big[ \mathfrak m_s^{\PPP, D}  - \mathfrak m_{s^\circ}^{\PPP, D} \big] f \big\vert
        = \sup_{s \in \mathbb{S}} \, \big\vert 
	T_{\RR^d} \big[ \Upsilon^{\PPP,D}_{s} \mathfrak m_{s}^{\PPP}\circ \pi^{D} - \Upsilon^{\PPP,D}_{s^\circ} \mathfrak m_{s^\circ}^{\PPP}\circ \pi^{D} \big] f \big\vert \\
	& \quad = \sup_{s \in \mathbb{S}} \, \big\vert 
	T_{\RR^d} \big[ \Upsilon^{\PPP,D}_{s} \big((\mathfrak m_{s}^{\PPP} - \mathfrak m_{s^\circ}^{\PPP})\circ \pi^{D} \big) \big] f \big\vert 
	+ \sum_{i \in D} \, \sup_{s \in \mathbb{S}} \, \big\vert 
	T_{\RR^d} \big[ \Upsilon^{\PPP,[i-1]}_{s} \big( \Upsilon^{\PPP,\{i\}}_{s} - \Upsilon^{\PPP,\{i\}}_{s^\circ} \big)
	\Upsilon^{\PPP,D\setminus[i]}_{s^\circ} \mathfrak m_{s^\circ}^{\PPP}\circ\pi^D \big] f \big\vert \\
	& \quad \lesssim \calM_{\ast}^{D} \Big( \sup_{s \in \mathbb{S}} \, \big\vert T_{\RR^d} \big[ (\mathfrak m_{s}^{\PPP} - \mathfrak m_{s^\circ}^{\PPP})\circ \pi^{D}\big] f \big\vert \Big) 
	+ \sum_{i \in D} \, T_{\RR^d} \big[\mathfrak m_{s^\circ}^{\PPP}\circ \pi^{D} \big] \calM_{\ast}^{D\setminus\{i\}} \Big( \sup_{s \in \mathbb{S}} \, \big\vert T_{\RR^d} \big[\Upsilon^{\PPP,\{i\}}_{s} - \Upsilon^{\PPP,\{i\}}_{s^\circ} \big] f \big\vert \Big),
	\end{align*}
	where $\calM_*^D$ is the composition of $\calM_*^{(i)}$, for $i \in D$, with indices  arranged in the decreasing order. Invoking the Fefferman--Stein inequality for $\calM_{\ast}^{(i)}$ and Lemma~\ref{lem:duo+rubiom} for the family $\{ T_{\RR^d} [\mathfrak m_{I_j}^{\PPP}\circ\pi^{D} ] : j \in \ZZ_+\}$, we obtain
	\begin{align}
        \label{eq:9}
        \begin{split}
        &\big\|O^2_I\big(T_{\RR^d}[\mathfrak m^{\PPP,D}_s] f : s\in\RR_+^k\big)\big\|_{L^p(\RR^{d})} \\
	&\qquad \lesssim_{p, \PPP}
	\big\|O^2_I\big(T_{\RR^d}[\mathfrak m^{\PPP}_s\circ \pi^D] f : s\in\RR_+^k\big)\big\|_{L^p(\RR^{d})}
	+ \sum_{i \in D} 
	\big\|O^2_I\big(T_{\RR^d}[\Upsilon^{\PPP,\{i\}}_s] f : s\in\RR_+^k\big)\big\|_{L^p(\RR^{d})}.
        \end{split}
	\end{align}
	Note that Lemma~\ref{lem:2m} and the inductive hypothesis \eqref{eq:tildeP} were used in the application of Lemma~\ref{lem:duo+rubiom}. Using Lemma~\ref{lem:2m} and \eqref{eq:tildeP} again we see that
	\[
	\big\|O^2_I\big(T_{\RR^d}[\mathfrak m^{\PPP}_s\circ\pi^{D}] f : s\in\RR_+^k\big)\big\|_{L^p(\RR^{d})} 
	\lesssim_{p,\PPP}  \norm{f}_{L^p(\RR^{d})}
	\]
	(it is important that $k_0 < k$ is allowed in \eqref{eq:tildeP}, since zero columns may appear after removing some rows from $\mathbb M$).
	Finally, the relevant bound for the second term in \eqref{eq:9} follows from Lemma~\ref{lem:2m} and \eqref{thm:9}.	
\end{proof}


\begin{thebibliography}{99}



\bibitem{ACK} \textsc{G.I. Arkhipov, V.N. Chubarikov, A.A. Karatsuba.}
\newblock {Trigonometric Sums In Number Theory And Analysis (De Gruyter Expositions in Mathematics).}
\newblock {Walter De Gruyter Inc. (2004).}

\bibitem{Bel} \textsc{A. Bellow}.
\newblock {Measure Theory Oberwolfach 1981. Proceedings of the Conference held at Oberwolfach, June 21--27, 1981.}
\newblock {Lecture Notes in Mathematics {\bf 945}, editors D. K\"olzow and D. Maharam-Stone.  Springer-Verlag Berlin Heidelberg (1982).}
\newblock {Section: Two problems submitted by A. Bellow, pp.~429--431.} 

\bibitem{BL} \textsc{V. Bergelson, A. Leibman.}
\newblock {A nilpotent Roth theorem.}
\newblock {{Invent. Math.} {\bf 147} (2002), pp.~427--470.}

\bibitem{Bir} \textsc{G. Birkhoff.}
\newblock {Proof of the ergodic theorem.}
\newblock {{Proc. Natl. Acad. Sci. USA} {\bf 17} (1931), pp.~656--660.}

\bibitem{B1} \textsc{J. Bourgain.}
\newblock {On the maximal ergodic theorem for certain subsets of the integers.}
\newblock {{Israel J. Math.} {\bf 61} (1988), pp.~39--72.}

\bibitem{B2} \textsc{J. Bourgain.}
\newblock {On the pointwise ergodic theorem on $L^p$ for arithmetic sets.}
\newblock {{Israel J. Math.} {\bf 61} (1988), pp.~73--84.}

\bibitem{B3} \textsc{J. Bourgain.}
\newblock{Pointwise ergodic theorems for arithmetic sets. With an appendix by the author, H. Furstenberg, Y.~Katznelson, and D.S. Ornstein.}
\newblock {{Inst. Hautes Etudes Sci. Publ. Math.} {\bf 69} (1989), pp.~5--45.}

\bibitem{BMSW1} \textsc{J. Bourgain, M. Mirek, E.M. Stein, B. Wr\'obel.}
\newblock{Dimension-free variational estimates on $L^p(\RR^d)$ for symmetric convex bodies}.
\newblock{Geom. Funct. Anal.  {\bf 28} (2018), pp. 58--99.}

\bibitem{BMSW} \textsc{J. Bourgain, M. Mirek, E.M. Stein, J. Wright.}
\newblock{On a multi-parameter variant of the Bellow--Furstenberg problem.}
\newblock {Forum of Mathematics, Pi {\bf 11} (2023), e23.}

\bibitem{BM} \textsc{Z. Buczolich, R.D. Mauldin.}
\newblock {Divergent square averages.}
\newblock {{Ann. of Math.} {\bf 171} (2010), pp.~1479--1530.}

\bibitem{Ca}\textsc{A. Calder\'on.}
\newblock {Ergodic theory and translation-invariant operators.}
\newblock {{Proc. Natl. Acad. Sci. USA {\bf 59} (1968), pp.~349--353.}}

\bibitem{CWW} \textsc{A. Carbery, S. Wainger, J. Wright.}
\newblock {Double Hilbert transform along polynomial surfaces in $\RR^3$.}
\newblock {{Duke Math. J. {\bf 101} (2000), pp.~499--513.}}

\bibitem{CWW1} \textsc{A. Carbery, S. Wainger, J. Wright.}
\newblock {Triple Hilbert transform along polynomial surfaces in $\RR^4$.}
\newblock {{Rev. Mat. Iberoam. {\bf 25} (2009), pp.~471--519.}}

\bibitem{CRW} \textsc{A. Carbery, F. Ricci, J. Wright.}
\newblock {Maximal functions and Hilbert transforms associated to polynomials.}
\newblock {{Rev. Mat. Iberoam. {\bf 14} (1998), pp.~117--144.}}

\bibitem{CNSW}\textsc{M. Christ, A. Nagel, E.M. Stein, S. Wainger.}
\newblock {Singular and maximal Radon transforms: analysis and geometry.}
\newblock {{Ann. of Math.  {\bf 150} (1999),  pp.~489--577.}}

\bibitem{D} \textsc{N. Dunford.}
\newblock {An individual ergodic theorem for non-commutative transformations.}
\newblock {{Acta Sci. Math. Szeged}  {\bf 14} (1951), pp.~1--4.}

\bibitem{duo} \textsc{J. Duoandikoetxea.} 
\newblock {Multiple singular integrals and maximal functions along hypersurfaces.}
\newblock {{Annales de l'Institut Fourier} {\bf 36} (1986), pp.~185--206.}

\bibitem{duo&rubio} \textsc{J. Duoandikoetxea, J.L. Rubio de Francia.} 
\newblock {Maximal and singular integral operators via Fourier transform estimates.}
\newblock {{Invent. Math.} {\bf 84} (1986), pp.~541--562.}

\bibitem{Fur}\textsc{H. Furstenberg.}
\newblock {Problems Session, Conference on Ergodic Theory and Applications,}
\newblock {University of New Hampshire, Durham, NH, June 1982.}

\bibitem{F} \textsc{H. Furstenberg.} 
\newblock {Recurrence in Ergodic Theory and Combinatorial Number Theory.}
\newblock {{Princeton University Press} (1981).}

\bibitem{JKRW}
\textsc{R.L. Jones, R. Kaufman, J.M. Rosenblatt, M. Wierdl.} 
\newblock{Oscillation in ergodic theory.} 
\newblock{{Ergod. Theor. Dyn. Syst.} {\bf 18} (1998), pp.~889--935.}

\bibitem{JRW} \textsc{R.L. Jones, J.M. Rosenblatt, M. Wierdl.}
\newblock {Oscillation inequalities for rectangles.}
\newblock {{Proc. Amer. Math. Soc.} \textbf{129} (2001), pp.~1349--1358.}

\bibitem{jsw} \textsc{R.L. Jones, A. Seeger, J. Wright.}
\newblock {Strong variational and jump inequalities in harmonic analysis.}
\newblock {{Trans. Amer. Math. Soc.} \textbf{360} (2008), pp.~6711--6742.}

\bibitem{KMT}
\textsc{B. Krause, M. Mirek, T. Tao.}
\newblock {Pointwise ergodic theorems for non-conventional bilinear polynomial averages.}
\newblock {Ann. of Math. {\bf 195} (2022), pp.~997--1109.}

\bibitem{LaV1}
\textsc{P. LaVictoire.}
\newblock { Universally $L^1$-Bad Arithmetic Sequences.}
\newblock { {J. Anal. Math.} {\bf 113} (2011), pp.~241--263.}

\bibitem{Lep}
\textsc{D. L{\'e}pingle.}
\newblock { La variation d'ordre $p$ des semi-martingales.}
\newblock { {Z. Wahrscheinlichkeitstheorie und Verw. Gebiete.} {\bf 36} (1976), pp.~295--316.}

\bibitem{MSS} \textsc{M. Mirek, W. S{\l}omian, T.Z. Szarek.} 
\newblock {Some remarks on oscillation inequalities.}
\newblock {Ergodic Theory and Dynamical Systems (2022), pp.~1--30.}

\bibitem{MSZ1} \textsc{M. Mirek, E.M. Stein, P. Zorin-Kranich.}
\newblock{Jump inequalities via real interpolation.}
\newblock{Math. Ann. {\bf 376} (2020), pp.~797--819.}

\bibitem{bootstrap} \textsc{M. Mirek, E.M. Stein, P. Zorin-Kranich.}
\newblock{A bootstrapping approach to jump inequalities and their applications.}
\newblock{Anal. PDE {\bf 13} (2020), pp.~527--558.}

\bibitem{MSZ3} \textsc{M. Mirek, E.M. Stein, P. Zorin-Kranich.}
\newblock{Jump inequalities for translation-invariant operators of Radon type on $\ZZ^d$. }
\newblock{Adv. Math. {\bf 365} (2020), 107065.}

\bibitem{MSzW} \textsc{M. Mirek, T.Z. Szarek, J. Wright.}
\newblock {Oscillation inequalities in ergodic theory and analysis: one-parameter and multi-parameter perspective.}
\newblock {{Rev. Mat. Iberoam. {\bf 38} (2022), pp.~2249--2284.}}

\bibitem{vN} \textsc{J. von Neumann.}
\newblock{Proof of the quasi-ergodic hypothesis.}
\newblock{Proc. Natl. Acad. Sci. USA {\bf 18} (1932), pp.~70--82.}

\bibitem{Nevo} \textsc{A. Nevo.}  \newblock {Pointwise Ergodic
Theorems for Actions of Groups.}  \newblock {Handbook of Dynamical
Systems, Volume 1B, A. Katok, B. Hasselblatt (eds.), Chapter 13,
Elsevier Science (2005).}

\bibitem{Pat1} \textsc{S. Patel.}
\newblock {Weak type estimate for a maximal function.}
\newblock {Indiana University Math. J. {\bf 55} (2006), pp.~341--368.}

\bibitem{Pat2} \textsc{S. Patel.}
\newblock {Double Hilbert transforms along polynomial surfaces in $\RR^3$.}
\newblock {Glasgow Math. J. {\bf 50} (2008) pp.~395--428.}

\bibitem{Pat3} \textsc{S. Patel.}
\newblock {$L^p$ estimates for a double Hilbert transform.}
\newblock {Acta Sci. Math. (Szeged) {\bf 75} (2009), pp.~241--264.}

\bibitem{RS} \textsc{F. Ricci, E.M. Stein.}
\newblock {Multiparameter singular integrals and maximal functions.}
\newblock {Ann. Inst. Fourier (Grenoble) \textbf{42} (1992), pp.~637--670.}

\bibitem{Riesz} \textsc{F. Riesz.}
\newblock {Some mean ergodic theorems.}
\newblock {{J. London Math. Soc.} {\bf 13} (1938),  pp.~274--278.}

\bibitem{STW} \textsc{A. Seeger, T. Tao, J. Wright.}
\newblock {Singular Maximal Functions and Radon Transforms near $L^1$.}
\newblock {{Amer. J.  Math.} \textbf{125} (2004), pp.~607--647.}

\bibitem{bigs} \textsc{E.M. Stein.} 
 \newblock {Harmonic Analysis: Real-Variable Methods, Orthogonality, and Oscillatory Integrals.}
 \newblock {Princeton University Press~(1993).}

\bibitem{Street} \textsc{B. Street.} 
 \newblock {Multi-parameter singular integrals.}
 \newblock {Ann. of Math. Stud. {\bf 189}, Princeton University Press (2014).}

\bibitem{Weyl} \textsc{H. Weyl.}
\newblock{{\"U}ber die Gleichverteilung von Zahlen mod. Eins.}
\newblock{{Math. Ann.} {\bf 7} (1916), pp.~313--352.}

\bibitem{W} \textsc{N. Wiener.}
\newblock {The ergodic theorem.}
\newblock {{Duke Math. J.}  {\bf 5} (1939),  pp.~1--18.}

\bibitem{Z} \textsc{A. Zygmund.}
\newblock {An individual ergodic theorem for non-commutative transformations.}
\newblock {{Acta Sci. Math. Szeged}  {\bf 14} (1951),  pp.~103--110.}

\end{thebibliography}
\end{document}